\thanks{Version from 26th April 2012}
\numberwithin{equation}{section}
\theoremstyle{plain}
\newtheorem{theorem}[equation]{Theorem}
\newtheorem{proposition}[equation]{Proposition}
\newtheorem{lemma}[equation]{Lemma} 
\newtheorem{corollary}[equation]{Corollary}
\theoremstyle{definition}
\newtheorem{example}[equation]{Example}
\theoremstyle{remark} 
\newtheorem{remark}[equation]{Remark} 
\newcommand{\Abel}{\operatorname{\mathsf{Ab}}}
\newcommand{\Acy}{\operatorname{\mathbf A}}
\newcommand{\acy}{\operatorname{A}}
\newcommand{\cosupp}{\operatorname{cosupp}}
\newcommand{\Dl}{\operatorname{\mathbf D}}
\newcommand{\End}{\operatorname{End}}
\newcommand{\fHom}{\operatorname{\mathcal{H}\!\!\;\mathit{om}}}
\newcommand{\Hom}{\operatorname{Hom}}
\newcommand{\id}{\operatorname{id}}
\newcommand{\Id}{\operatorname{Id}}
\newcommand{\Inj}{\operatorname{\mathsf{Inj}}}
\newcommand{\Ker}{\operatorname{Ker}}
\newcommand{\KInj}[1]{\mathsf K(\Inj #1)}
\newcommand{\Tloc}{\operatorname{\mathbf{Loc}}}
\newcommand{\tloc}{\operatorname{Loc}}
\newcommand{\Qcoh}{\operatorname{Qcoh}}
\newcommand{\Sp}{\operatorname{Sp}}
\newcommand{\Spec}{\operatorname{Spec}}
\newcommand{\StMod}{\operatorname{\mathsf{StMod}}}
\newcommand{\supp}{\operatorname{supp}}
\newcommand{\Th}{\operatorname{\mathbf{Th}}}
\newcommand{\Thick}{\operatorname{Th}}
\newcommand{\op}{\mathrm{op}}
\newcommand{\per}{\mathrm{per}}
\newcommand{\col}{\colon}
\newcommand{\ges}{{\scriptscriptstyle\geqslant}}
\newcommand{\kos}[2]{{#1}/\!\!/{#2}}
\newcommand{\lto}{\longrightarrow}
\renewcommand{\setminus}{\smallsetminus}
\newcommand{\xra}{\xrightarrow}
\newcommand{\bik}{Benson/Iyengar/Krause}
\def\mcH{\mathcal{H}}
\def\sfT{\mathcal{T}}
\def\mcU{\mathcal{U}} 
\def\mcV{\mathcal{V}}
\def\sfc{\mathsf c}
\def\sfA{\mathsf A} 
\def\Ab{\mathsf{Ab}}
\def\sfC{\mathsf C}
\def\sfD{\mathsf D}
\def\sfP{\mathsf P}
\def\sfQ{\mathsf Q}
\def\sfS{\mathsf S} 
\def\Sets{\mathsf{Sets}} 
\def\sfT{\mathsf T} 
\def\sfU{\mathsf U}
\def\bbZ{\mathbb Z}
\newcommand{\fp}{\mathfrak{p}}
\newcommand{\fq}{\mathfrak{q}}
\newcommand{\gam}{\varGamma} 
\newcommand{\lam}{\varLambda}
\def\Si{\Sigma} 
\def\si{\sigma}
\def\one{\mathds 1}
\title[The Bousfield lattice and stratification]
{The Bousfield lattice of a triangulated category and stratification}
\author{Srikanth B. Iyengar} 
\address{Srikanth B. Iyengar\\ 
Department of Mathematics\\ 
University of Nebraska\\ 
Lincoln, NE 68588\\ 
U.S.A.}
\author{Henning Krause} 
\address{Henning Krause\\ 
Fakult\"at f\"ur Mathematik\\ 
Universit\"at Bielefeld\\ 
33501 Bielefeld\\ 
Germany.}
\begin{document}

\begin{abstract}
  For a tensor triangulated category which is well generated in the
  sense of Neeman, it is shown that the collection of Bousfield
  classes forms a set. This set has a natural structure of a complete
  lattice which is then studied, using the notions of
  stratification and support.
\end{abstract}

%\keywords{Bousfield class, lattice, localizing subcategory, stratification, support}

%\subjclass[2010]{18E30(primary); 55U35}

\thanks{The research of the first author was partly supported by NSF grant DMS 0903493. }

\maketitle 

\setcounter{tocdepth}{1} 
\tableofcontents

\section{Introduction}

In recent work with Benson~\cite{\bik:2008a,\bik:2008b,\bik:2009a}, we introduced a notion of stratification of a triangulated category via
the action of a graded commutative ring, and used it to classify the localizing subcategories of the stable module category of a finite
group. In this paper, we relate these ideas to work on the lattice of the stable homotopy category of spectra initiated by Bousfield~\cite{Bousfield:1979a}, and through this, also to the theory of support being developed by Balmer~\cite{Balmer:2005a}. Tensor
triangulated categories, meaning triangulated categories admitting set-indexed coproducts and carrying a tensor product compatible with
the triangulated structure, are an appropriate framework for our analysis.  The stable homotopy category of spectra and the stable
module category of a finite group are important examples, but there are many more.

Let $(\sfT,\otimes,\one)$ be a tensor triangulated category. Mimicking
\cite{Bousfield:1979a}, for each object $X$ in $\sfT$ the class of
objects annihilated by the functor $X\otimes-$ is said to be the
\emph{Bousfield class} of $X$. Objects $X$ and $Y$ are \emph{Bousfield
  equivalent} if their Bousfield classes coincide. Our first result,
Theorem~\ref{th:cardinality}, involves the notion of well generated
triangulated categories, introduced by Neeman~\cite{Neeman:2001a}, and
asserts:

\smallskip

\noindent\emph{In a well generated tensor triangulated category, the Bousfield classes form a set.}

\smallskip

This result was proved by Ohkawa~\cite{Ohkawa:1989a} for the stable
homotopy category, but seems to be new even for compactly generated
triangulated categories; it gives an affirmative answer to
Question~5.9 in \cite{Dwyer/Palmieri:2008a}, which asked if the
Bousfield classes in the derived category of any commutative ring form
a set.

The set of Bousfield classes has a structure of a complete lattice,
called the \emph{Bousfield lattice}.  It has been studied by
topologist for the stable homotopy category of spectra, see for
example \cite{Bousfield:1979a,Ravenel:1992a,Hovey/Palmieri:1999a,Strickland:2004a}, but
not much is known for general tensor triangulated categories. In this
work we combine a general analysis of the Bousfield lattice with an
analysis for some specific classes of tensor triangulated categories.

In Sections~\ref{se:stratification} and \ref{sec:Local objects}, we
concentrate on compactly generated tensor triangulated categories that
are \emph{stratified}, in the sense of \cite{\bik:2008b}, by an action of a graded-commutative noetherian
ring. For instance, in Corollary~\ref{cor:locset},
we give a complete description of the Bousfield lattice of such a
tensor triangulated category in terms of the prime ideal spectrum of
the ring. This connection was first made by Hovey, Palmieri, and
Strickland \cite{Hovey/Palmieri/Strickland:1997a} in their work on
axiomatic stable homotopy theory.

A new ingredient in our work is a connection with support
and cosupport for objects in $\sfT$, introduced in \cite{\bik:2008a}
and \cite{\bik:2010a}, respectively. For instance, the stratification
condition implies that the tensor product formula
\[
\supp_R(X\otimes Y)=\supp_R(X)\cap\supp_R(Y)
\]
holds for all $X,Y$ in $\sfT$; see \cite{\bik:2009a}. Since $\supp_{R}(X)=\varnothing$ only when $X=0$, the
Bousfield class $\acy(X)$ of an object $X$ has thus the form
\[
\acy(X) = \{Y\in\sfT\mid\supp_R(Y)\cap\supp_R(X)=\varnothing\}\,,
\] 
while the $X$-local objects can be described as follows:
\[
\acy(X) ^\perp= \{Y\in\sfT\mid\cosupp_R(Y)\subseteq\supp_R(X)\}\,.
\] 
It follows that an object $X$ in $\sfT$ is $X$-local precisely when $\cosupp_{R}(X)\subseteq\supp_{R}(X)$; this
inclusion holds when $X$ is compact, but not in general.

In Sections~\ref{se:Stone} and \ref{sec:Compact objects} of this
paper, we employ concepts from lattice theory and introduce a notion
of support which uses the Bousfield lattice. This is an attempt to
develop a theory of support for tensor triangulated categories
independent of any action of a ring. A typical example is the derived
category of the category of quasi-coherent sheaves on a noetherian
scheme. For other work in that direction, see Balmer and Favi
\cite{BF}, and Stevenson \cite{Stevenson:2011a}.
 
The support defined in terms of the Bousfield lattice is closely
related to Balmer's theory of support \cite{Balmer:2005a}. The
following diagram makes this more precise, and its commutativity will
be a consequence of our analysis.
\[
\xymatrixrowsep{2pc} \xymatrixcolsep{3pc} \xymatrix{ &*+\txt{Tensor\\
    triangulated categories}\ar[ld]|{\txt{\tiny{[Bousfield,
        1979]}}}\ar[rd]|{\txt{\tiny{[Balmer, 2005]}}}\\
  *+\txt{Lattices}\ar@{<->}[rd]|{\txt{\tiny{[Stone, 1937]}}}
  &&*+\txt{Topological \\spaces} \ar@{<->}[ld]|{\txt{\tiny{[Hochster,
        1969]}}} \\
  &*+\txt{Topological \\spaces} }
\] 
We hasten to add that one needs to restrict at each vertex to some
appropriate class of objects to make sure that all arrows are
well-defined. On the left, one composes the construction of the
Bousfield lattice with Stone duality \cite{Stone:1937a}, which means
that a distributive lattice is represented via its associated
spectrum.

In \cite{Balmer:2005a}, Balmer associates with any essentially small
tensor triangulated category a spectrum of prime ideals together with
a Zariski topology. In \cite{Buan/Krause/Solberg:2007a}, it is shown
that this space is spectral in the sense of Hochster
\cite{Hochster:1969a}. There is a corresponding Hochster dual space
and that completes the right half of the above diagram. There is also
an objectwise interpretation of this diagram, assigning to any object
of a tensor triangulated category its Bousfield class on the left and
its support on the right. The commutativity of the above diagram then
amounts to the following assertion which is a consequence of
Theorem~\ref{th:spec}:

\smallskip
\noindent 
\emph{The Hochster dual of Balmer's spectrum of the category of
  compact objects in $\sfT$ is homeomorphic to the spectrum associated
  with the sublattice of the Bousfield lattice of $\sfT$ generated by
  the compact objects. This homeomorphism identifies the support of a
  compact object $X$, which is a Zariski closed subset, with the
  quasi-compact open subset in the Stone topology corresponding to the
  Bousfield class of $X$.}

\smallskip

\subsection*{Acknowledgments} 
We learned about the Stone duality point of view on spectra and supports from a talk by Joachim Kock in 2007,
as explained in \cite{Kock:2007}. It is a pleasure to thank him, and also Ivo Dell'Ambrogio and Greg Stevenson for
inspiring discussions and helpful comments on this work. 

A preliminary version of this work was presented at a meeting at the Mathematisches
Forschungsinstitut at Oberwolfach, see~\cite{Iyengar:2011}. We thank
the institute for providing an opportunity for research
collaboration. The second author gratefully acknowledges support from
the Hausdorff Research Institute for Mathematics at Bonn where part of
this work was done.

\section{Localizing subcategories}

In this section we discuss a hierarchy of localizing subcategories. We
work in the context of well generated triangulated categories in the
sense of Neeman \cite{Neeman:2001a}. This includes the class of
compactly generated categories. Note that localizing subcategories or
Verdier quotient categories of a compactly generated triangulated
category are usually not compactly generated; but they are often well
generated. Thus the class of well generated triangulated categories
seems to be the appropriate universe for studying triangulated
categories.

\subsection*{Well generated triangulated categories}
Let $\sfT$ be a \emph{well generated triangulated category}. Thus
$\sfT$ is a triangulated category with set-indexed coproducts and
there is a set of perfect generators which are $\alpha$-small for some
regular cardinal $\alpha$; see \cite{Neeman:2001a,Krause:2001a} for
details. In that case one calls $\sfT$ \emph{$\alpha$-well generated}.

Now suppose that $\sfT$ is $\alpha$-well generated, and therefore
$\beta$-well generated for every regular cardinal $\beta\ge\alpha$. We
denote by $\sfT^\alpha$ the full subcategory of \emph{$\alpha$-compact
  objects} in $\sfT$. This is by definition the smallest triangulated
subcategory of $\sfT$ which contains a set of $\alpha$-small perfect
generators of $\sfT$ and is closed under \emph{$\alpha$-coproducts},
that is, coproducts with less than $\alpha$ factors. Note that
$\sfT^\alpha$ does not depend on the choice of perfect generators of
$\sfT$ \cite[Lemma~5]{Krause:2001a}. Moreover,
$\sfT=\bigcup_{\beta\ges \alpha}\sfT^{\beta}$, where $\beta$ runs
through all regular cardinals.

If $\alpha=\aleph_0$, then $\sfT$ is called \emph{compactly
generated}. In that case $\sfT^{\aleph_0}$ coincides with the
subcategory of compact objects and one writes $\sfT^c$ for
$\sfT^{\aleph_0}$.

Denote by $\Abel_\alpha(\sfT)$ the category of additive functors
$(\sfT^\alpha)^\op\to\Ab$ into the category of abelian groups which
send $\alpha$-coproducts in $\sfT^\alpha$ to products in $\Ab$. This
is an abelian category, and the \emph{restricted Yoneda functor}
\[ H_\alpha\col\sfT\longrightarrow \Abel_\alpha(\sfT),\quad
X\longmapsto \Hom_\sfT(-,X)|_{\sfT^\alpha}
\] is the universal cohomological and coproduct preserving functor
into an abelian category which has set-indexed coproducts and exact
$\alpha$-filtered colimits \cite[Proposition~6.10.1]{Krause:2010a}. If
$\alpha=\aleph_0$, then $H_\alpha$ is the universal functor into an
abelian category satisfying Grothendieck's AB5 condition; see
\cite[Corollary~2.4]{Krause:2000}, or
\cite[Proposition~7.3]{Christensen/Strickland:1998a} for the stable
homotopy category. For example, any module category is an AB5
category.

\subsection*{Localizing subcategories}
A full subcategory of $\sfT$ is called \emph{localizing} if it is a
triangulated subcategory which is closed under set-indexed
coproducts. A \emph{localization functor} $L\col\sfT\to \sfT$ is an
exact functor that admits a natural transformation $\eta\col
\Id_{\sfT}\to L$ such that $L(\eta X)$ is an isomorphism and $L(\eta
X)=\eta(LX)$ for all objects $X$ in $\sfT$.

\begin{proposition}
\label{prop:loc} Let $\sfT$ be an $\alpha$-well generated triangulated
category, where $\alpha$ denotes a fixed regular cardinal.  Consider
for a localizing subcategory $\sfS$ of $\sfT$ the following
statements:
\begin{enumerate}
\item There is an exact functor $F\col \sfT\to\sfT$ preserving
set-indexed coproducts such that $\sfS=\Ker F$.
\item There is a cohomological functor $H\col \sfT\to\sfA$ preserving
set-indexed coproducts into an abelian category which has set-indexed
coproducts and exact $\alpha$-filtered colimits such that $\sfS=\Ker
H$.
\item There is a cohomological functor $H\col \sfT\to\sfA$ preserving
set-indexed coproducts into an abelian category which has set-indexed
coproducts and exact $\beta$-filtered colimits for some regular
cardinal $\beta$ such that $\sfS=\Ker H$.
\item There is an object $X$ in $\sfT$ such that $\sfS$ equals the
smallest localizing subcategory of $\sfT$ containing $X$.
\item There is a localization functor $L\col \sfT\to\sfT$ such that
$\sfS=\Ker L$.
\end{enumerate} Then the implications $(1) \implies (2) \implies (3)
\iff (4) \implies (5)$ hold.
\end{proposition}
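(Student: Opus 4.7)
The plan is to establish the easy chain $(1) \Rightarrow (2) \Rightarrow (3)$ and the Bousfield step $(4) \Rightarrow (5)$ directly, and then to deduce the equivalence $(3) \iff (4)$ by closing the loop $(3) \Rightarrow (4) \Rightarrow (5) \Rightarrow (1) \Rightarrow (2) \Rightarrow (3)$, where $(5) \Rightarrow (1)$ is trivial (set $F := L$). The only genuinely nontrivial step is $(3) \Rightarrow (4)$, and I expect it to be the main obstacle.

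For $(1) \Rightarrow (2)$, I would compose the given $F$ with the restricted Yoneda functor $H_\alpha \col \sfT \to \Abel_\alpha(\sfT)$, yielding a cohomological coproduct-preserving functor whose target has set-indexed coproducts and exact $\alpha$-filtered colimits by the universal property cited in the excerpt. The crucial observation is that $H_\alpha$ is conservative: a set of $\alpha$-small perfect generators of $\sfT$ lies in $\sfT^\alpha$ and detects zero objects, since such generators also detect maps into their shifts. Hence $\Ker(H_\alpha \comp F) = \Ker F = \sfS$, giving (2) with $H := H_\alpha \comp F$. The implication $(2) \Rightarrow (3)$ is immediate upon taking $\beta := \alpha$.

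For the heart of the argument, $(3) \Rightarrow (4)$, I would first choose a regular cardinal $\gamma \ges \max(\alpha,\beta)$ so large that $H$ preserves $\gamma$-filtered colimits; then $\sfT$ is $\gamma$-well generated. I claim that $\sfS \cap \sfT^\gamma$ generates $\sfS$ as a localizing subcategory. Granting this, since $\sfT^\gamma$ is essentially small, a coproduct of one representative from each isomorphism class in $\sfS \cap \sfT^\gamma$ provides the single generator required in~(4). To prove the generation claim, I would take $Y \in \sfS$ and present $Y$ as the homotopy colimit of a $\gamma$-filtered system of $\gamma$-compact approximations, as is possible in a $\gamma$-well generated category. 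The vanishing $HY = 0$, combined with the exactness of $\gamma$-filtered colimits in $\sfA$ and their preservation by $H$, would then allow me to refine the presenting system to a cofinal one whose terms can be replaced by objects in $\sfS \cap \sfT^\gamma$, exhibiting $Y$ inside the localizing subcategory they generate. The delicate point is precisely this refinement step: extracting $\gamma$-compact objects of $\sfS$ from the vanishing of $H$ requires careful continuity bookkeeping along the filtered system, and is the place where the compatibility between the filtered-colimit structure of $\sfA$ and the phantom-map presentations inside the $\gamma$-well generated category $\sfT$ is exploited.

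Finally, for $(4) \Rightarrow (5)$, I would invoke the theorem that in a well generated triangulated category, the smallest localizing subcategory containing any set of objects is again well generated; Brown representability in this setting then yields a Bousfield localization, that is, a localization functor $L\col \sfT \to \sfT$ with $\Ker L = \sfS$. Composed with the already established chain $(5) \Rightarrow (1) \Rightarrow (2) \Rightarrow (3)$, this also gives $(4) \Rightarrow (3)$, completing the equivalence $(3) \iff (4)$ and the full diagram of implications.
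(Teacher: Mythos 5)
Your chain breaks at the step you call trivial: $(5)\Rightarrow(1)$ by setting $F:=L$ is \emph{false} in general. A localization functor is exact, but it need not preserve set-indexed coproducts --- that is precisely the condition of being a \emph{smashing} localization, and the gap between conditions (5) and (1) is the whole point of the hierarchy in this proposition (it is, for instance, the content of the telescope conjecture that certain kernels of type (5) are not known to be of type (1)). If $(5)\Rightarrow(1)$ held, all five conditions would be equivalent, which the statement deliberately does not assert. Consequently your derivation of $(4)\Rightarrow(3)$ by closing the loop through (5) and (1) collapses. The paper proves $(4)\Rightarrow(3)$ directly: since $\sfS$ is generated by a set of objects, the Verdier quotient $\sfT/\sfS$ is $\beta$-well generated for some regular cardinal $\beta$, and one takes $H$ to be the composite of the (exact, coproduct-preserving) quotient functor $\sfT\to\sfT/\sfS$ with the restricted Yoneda functor $\sfT/\sfS\to\Abel_\beta(\sfT/\sfS)$; this composite is cohomological, preserves coproducts, and has kernel exactly $\sfS$.

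Two further remarks. First, your $(3)\Rightarrow(4)$ is a sketch of the hard result that $\Ker H$ is well generated (the paper cites Krause, \emph{Localization theory for triangulated categories}, Theorem~7.5.1), and the ``refinement step'' you flag as delicate is exactly where the real work lies; moreover you cannot simply ``choose $\gamma$ so large that $H$ preserves $\gamma$-filtered colimits,'' since no such preservation is among the hypotheses and filtered colimits do not literally exist in $\sfT$ --- the actual argument factors $H$ through the abelian category $\Abel_\beta(\sfT)$ and works with localizing subcategories there. Second, your steps $(1)\Rightarrow(2)\Rightarrow(3)$ and $(4)\Rightarrow(5)$ agree with the paper and are fine.
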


\begin{proof} (1) $\Rightarrow$ (2): Let $H$ be the composite of $F$
with the restricted Yoneda functor $\sfT\to \Abel_\alpha(\sfT)$. Then
$\Ker H=\sfS$.

(2) $\Rightarrow$ (3): Clear.

(3) $\Rightarrow$ (4): In \cite[Theorem~7.5.1]{Krause:2010a} it is
shown that $\sfS=\Ker H$ is well generated. Now take for $X$ the
coproduct of a set of generators of $\sfS$.

(4) $\Rightarrow$ (3): The Verdier quotient $\sfT/\sfS$ is
$\beta$-well generated for some regular cardinal $\beta$, by
\cite[Corollary 4.4.3]{Neeman:2001a} or
\cite[Theorem~7.2.1]{Krause:2010a}.  Let $H$ be the composite of the
canonical functor $\sfT\to\sfT/\sfS$ with the restricted Yoneda
functor $\sfT/\sfS\to \Abel_\beta(\sfT/\sfS)$. Then $\Ker H=\sfS$.

(4) $\Rightarrow$ (5): The Verdier quotient $\sfT/\sfS$ has the
property that $\Hom_{\sfT/\sfS}(X,Y)$ is a set for each pair of
objects $X,Y$ in $\sfT/\sfS$; see \cite[Corollary 4.4.3]{Neeman:2001a}
or \cite[Theorem~7.2.1]{Krause:2010a}. Thus the canonical functor
$Q\col \sfT\to\sfT/\sfS$ admits a right adjoint, by Brown
representability~\cite[Theorem 8.3.3]{Neeman:2001a}.  The composite of $Q$ with its right adjoint is a
localization functor with kernel $\sfS$.
\end{proof}

\begin{remark} The triangulated category $\sfS$ is well generated if
  and only if (3), equivalently (4), holds.
\end{remark}

A \emph{skeleton} of a category $\sfC$ is a full subcategory $\sfS$
such that each object in $\sfC$ is isomorphic to exactly one object in
$\sfS$; this is unique up to an isomorphism of categories. We write $|\sfC|$ for
the cardinality of the collection of morphism of a skeleton of $\sfC$.

\begin{theorem}
\label{thm:set} Let $\sfT$ be an $\alpha$-well generated triangulated
category and consider the collection $\mcH$ of functors $H\col
\sfT\to\sfA$ such that
\begin{enumerate}
\item $\sfA$ is abelian and has set-indexed coproducts and exact
$\alpha$-filtered colimits,
\item $H$ is cohomological and preserves set-indexed coproducts.
\end{enumerate} Then the localizing subcategories of the form $\Ker H$
for some $H\in\mcH$ form a set of cardinality at most
$2^{2^{|\sfT^\alpha|}}$.
\end{theorem}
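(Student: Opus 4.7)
The plan is to use the universal property of the restricted Yoneda functor $H_\alpha$ to replace each $H\in\mcH$ by an exact, coproduct-preserving functor out of $\Abel_\alpha(\sfT)$, then to argue that the kernel of such a replacement is determined by a subset of the isomorphism classes of $\alpha$-presentable objects of $\Abel_\alpha(\sfT)$, and finally to count these by cardinal arithmetic.

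First I would apply the universal property of $H_\alpha$ cited just above the theorem: because $\sfA$ has set-indexed coproducts and exact $\alpha$-filtered colimits and $H$ is cohomological and coproduct-preserving, $H$ factors uniquely as $H=\bar H\comp H_\alpha$ for an exact, coproduct-preserving functor $\bar H\col\Abel_\alpha(\sfT)\to\sfA$. Hence $\Ker H=H_\alpha^{-1}(\Ker\bar H)$, and it suffices to bound the number of subcategories of $\Abel_\alpha(\sfT)$ that arise as $\Ker\bar H$.

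Next I would observe that any such $\Ker\bar H$ is a Serre subcategory of the Grothendieck abelian category $\Abel_\alpha(\sfT)$ closed under arbitrary coproducts; it is therefore automatically closed under $\alpha$-filtered colimits, because these can be realised as cokernels of maps between coproducts. Since $\Abel_\alpha(\sfT)$ is locally $\alpha$-presentable and every object is an $\alpha$-filtered colimit of its $\alpha$-presentable subobjects, $\Ker\bar H$ is determined by its intersection with a skeleton of the $\alpha$-presentable part of $\Abel_\alpha(\sfT)$. This is the step I expect to need most care, as it combines local presentability of $\Abel_\alpha(\sfT)$ with the hypothesis that $\bar H$ preserves arbitrary, not just $\alpha$-small, coproducts.

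Finally, set $N=|\sfT^\alpha|$. Every $\alpha$-presentable object of $\Abel_\alpha(\sfT)$ is a cokernel of a map between two $\alpha$-coproducts of representables $H_\alpha(X)$ with $X\in\sfT^\alpha$; since $\sfT^\alpha$ is closed under $\alpha$-coproducts, these coproducts and the morphisms between them already lie in $\sfT^\alpha$, which by assumption has at most $N$ morphisms in total. Elementary cardinal arithmetic then bounds the number of such cokernel presentations by $N^\alpha\le N^N=2^N$, so there are at most $2^N$ isomorphism classes of $\alpha$-presentable objects and hence at most $2^{2^N}=2^{2^{|\sfT^\alpha|}}$ subsets of them, which yields the claimed upper bound.
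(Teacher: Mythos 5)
Your first step (factoring $H=\bar H\comp H_\alpha$ through the restricted Yoneda functor and reducing to a count of the subcategories $\Ker\bar H$ of $\Abel_\alpha(\sfT)$) and your final cardinal arithmetic agree with the paper. The gap is in the middle step, where you assert that $\Ker\bar H$ is determined by its intersection with the $\alpha$-presentable objects, on the grounds that every object is the $\alpha$-filtered colimit of its $\alpha$-presentable \emph{subobjects}. That statement is false in general: already for $\alpha=\aleph_0$, where $\Abel_{\aleph_0}(\sfT)$ is the category of $\sfT^c$-modules, a cyclic module $H_{\aleph_0}(X)/U$ with $U$ not finitely generated is not the union of its finitely presented submodules (any submodule containing the image of the generator is the whole module, which is not finitely presented). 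More to the point, a localizing subcategory of $\Abel_\alpha(\sfT)$ is closed under subobjects and under filtered unions, so it is determined by which \emph{$\alpha$-generated} objects it contains; nothing forces it to be determined by its $\alpha$-presentable members. If $\mcS$ fails to be of ``$\alpha$-finite type'', then $\mcS$ and the localizing subcategory generated by the $\alpha$-presentable objects of $\mcS$ contain the same $\alpha$-presentable objects yet differ, and your argument offers no reason why a kernel $\Ker\bar H$ could not be of this kind.

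The paper's proof repairs exactly this point by working with the quotients $H_\alpha(X)/U$ of representables by \emph{arbitrary} subobjects $U$ (the $\alpha$-generated objects) instead of the $\alpha$-presentable ones. Every object of $\Abel_\alpha(\sfT)$ is the $\alpha$-filtered union of its subobjects of this form, and since $\Ker\bar H$ is closed under subobjects and under filtered unions, it is determined by the set of such quotients it contains. The count is unaffected: a subobject $U\subseteq H_\alpha(X)$ is given by a family of subsets $U(C)\subseteq\Hom_\sfT(C,X)$ with $C$ running through $\sfT^\alpha$, so there are at most $2^{|\sfT^\alpha|}$ pairs $(X,U)$ and hence at most $2^{2^{|\sfT^\alpha|}}$ possible kernels. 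Replacing ``$\alpha$-presentable'' by ``$\alpha$-generated'' in your middle and final steps turns your outline into the paper's argument.
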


\begin{proof} Any functor $H\in\mcH$ can be extended to an exact
  functor $\bar H\col \Abel_\alpha(\sfT)\to\sfA$ preserving
  set-indexed coproducts such that $H=\bar H H_\alpha$, by
  \cite[Proposition~6.10.1]{Krause:2010a}. Thus the kernel of $\bar H$
  is a localizing subcategory of $\Abel_\alpha(\sfT)$ and it
  determines $\Ker H$, since $H_\alpha(X)\neq 0$ for each non-zero
  object $X$ in $\sfT$. Recall that a full subcategory of an abelian
  category is \emph{localizing} if it is closed under subobjects,
  quotient objects, extensions, and set-indexed coproducts.

  The abelian category $\Abel_\alpha(\sfT)$ is generated by the
  representable functors $H_\alpha(X)$ with $X\in\sfT^\alpha$, and it
  follows that each object of $\Abel_\alpha(\sfT)$ is the
  $\alpha$-filtered union of its subobjects of the form
  $H_\alpha(X)/U$ where $X\in\sfT^\alpha$ and $U\subseteq H_\alpha(X)$
  is a subobject.  Thus $\Ker\bar H$ is determined by the objects of
  the form $H_\alpha(X)/U$ which it contains. The number of pairs
  $(U,X)$ where $X\in\sfT^\alpha$ and $U\subseteq H_\alpha(X)$ is
  bounded by ${2^{|\sfT^\alpha|}}$, since $U$ is given by the family
  of subsets $U(C)\subseteq \Hom_\sfT(C,X)$ where $C$ runs through the
  objects of $\sfT^\alpha$.  It follows that the number of
  subcategories of the form $\Ker \bar H$ is bounded by the cardinal
  $2^{2^{|\sfT^\alpha|}}$.
\end{proof}

For the preceding result, the most interesting case is when
$\alpha=\aleph_0$, which means that $\sfT$ is compactly generated. In
that case $\Abel_\alpha(\sfT)$ equals the category of
$\sfT^c$-modules, where the category of compact objects $\sfT^c$ is
viewed as a ring with several objects. Given a localizing subcategory
$\sfC\subseteq\Abel_\alpha(\sfT)$, there exists a set of
indecomposable injective modules $Q_i$ such that a module $X$ belongs
to $\sfC$ if and only if $\Hom_{\sfT^c}(X,Q_i)=0$ for all $i$; see
\cite[Chap.~III]{Gabriel:1962a}. The isomorphism classes of
indecomposable injective $\sfT^c$-modules form a set, which can be
identified (via Brown representability) with the \emph{Ziegler
  spectrum} $\Sp_{\mathrm{Zg}}(\sfT)$ consisting of the isomorphism
classes of indecomposable pure-injective objects in $\sfT$; see
\cite[\S1]{Krause:2000}. The map taking a localizing subcategory
$\sfS\subseteq\sfT$ as in Theorem~\ref{thm:set} to $\sfS^\perp\cap
\Sp_{\mathrm{Zg}}(\sfT)$ is injective, since $\sfS={^\perp
  (\sfS^\perp\cap \Sp_{\mathrm{Zg}}(\sfT))}$. This provides another
explicit method to bound the cardinality of the collection of these
localizing subcategories.

\section{Bousfield classes}
\label{sec:Bousfield classes}

In this section we recall the notion of a Bousfield class of an object
in a tensor triangulated category, and prove that the collection of
Bousfield classes form a set, provided the triangulated category is
well generated.

\subsection*{Tensor triangulated categories}
Let $(\sfT,\otimes,\one)$ be a \emph{tensor triangulated
  category}. Thus $\sfT$ is a triangulated category with a symmetric
monoidal structure; $\otimes$ is its tensor product which is exact in
each variable and $\one$ is the unit of the tensor product. We assume
that $\sfT$ has set-indexed coproducts and that the tensor product
preserves coproducts in each variable.

Recall that a subcategory $\sfS\subseteq\sfT$ is \emph{tensor closed}
if $X\in\sfS$ and $Y\in\sfT$ implies $X\otimes Y\in\sfS$. Given any
object or class of objects $X$ in $\sfT$, we write $\tloc(X)$ for the
smallest tensor closed localizing subcategory of $\sfT$ containing
$X$.

For a subcategory $\sfS\subseteq\sfT$, we define its \emph{orthogonal subcategories}
\begin{align*} 
\sfS^\perp&=\{Y\in\sfT\mid\Hom_\sfT(X,Y)=0\text{ for all }X\in\sfS\},\\ 
^\perp\sfS&=\{X\in\sfT\mid\Hom_\sfT(X,Y)=0\text{ for all }Y\in\sfS\}.
\end{align*}
It is not hard to verify that $\sfS\subseteq{^\perp(\sfS^\perp)}$, and that equality holds when
$\sfS=\Ker L$ for some localization functor $L\col\sfT\to\sfT$.

\subsection*{Bousfield classes} 
Let $X$ be an object in $\sfT$. Following Bousfield
\cite{Bousfield:1979a}, we define the \emph{Bousfield class} of an
object $X$ to
be the full subcategory of $\sfT$ with objects
\[ 
\acy(X) = \{Y\in \sfT\mid X\otimes Y=0\}\,.
\] 
The objects in $\acy(X)$ are said to be \emph{$X$-acyclic}. Note that
$\acy(X)$ is a tensor closed localizing subcategory of $\sfT$.

For the stable homotopy category of spectra, Ohkawa proved
\cite{Ohkawa:1989a} that the collection of Bousfield classes forms a
set of cardinality at most $2^{2^{\aleph_0}}$; see also
\cite{Dwyer/Palmieri:2001a}. More recently, Dwyer and
Palmieri~\cite{Dwyer/Palmieri:2008a} investigated the Bousfield
classes in the derived category of modules over some non-noetherian
rings, and asked in Question~5.9 of op.~cit., whether these Bousfield
classes form a set. We answer their question in the affirmative, as
follows\footnote{See also \cite{Casacuberta/Gutierrez/Rosicky:2012a} for other results in this direction.}.

\begin{theorem} 
\label{th:cardinality}
For any $\alpha$-well generated tensor triangulated category $\sfT$,
the collection of Bousfield classes forms a set of cardinality at most
$2^{2^{|\sfT^\alpha|}}$.
\end{theorem}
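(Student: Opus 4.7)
The strategy is to realize every Bousfield class as the kernel of a functor of the type considered in Theorem~\ref{thm:set}, and then invoke that theorem directly. The key observation is that for each object $X \in \sfT$, the functor
\[
F_X = X \otimes - \col \sfT \lto \sfT
\]
is exact (because the tensor product is exact in each variable) and preserves set-indexed coproducts (by the standing hypothesis on $\sfT$). By definition, $\acy(X) = \Ker F_X$, so each Bousfield class is a localizing subcategory satisfying condition (1) of Proposition~\ref{prop:loc}.

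Next, I would apply the implication $(1) \Rightarrow (2)$ of Proposition~\ref{prop:loc}, which produces a cohomological functor $H_X \col \sfT \to \sfA$ preserving set-indexed coproducts, with $\sfA$ an abelian category having set-indexed coproducts and exact $\alpha$-filtered colimits, such that $\acy(X) = \Ker H_X$. Concretely this $H_X$ is the composite of $F_X$ with the restricted Yoneda functor $\sfT \to \Abel_\alpha(\sfT)$. In particular $H_X$ belongs to the collection $\mcH$ of Theorem~\ref{thm:set}.

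Therefore the assignment $X \mapsto \acy(X)$ factors through the collection of localizing subcategories of the form $\Ker H$ with $H \in \mcH$. By Theorem~\ref{thm:set} this collection is a set of cardinality at most $2^{2^{|\sfT^\alpha|}}$, and the theorem follows. There is essentially no technical obstacle here: the whole point is that the previous two results were engineered exactly to make this argument routine. The only thing to verify with care is that the ambient setup of Proposition~\ref{prop:loc} applies verbatim to the functor $X \otimes -$, which is immediate from the definition of a tensor triangulated category as given in the excerpt.
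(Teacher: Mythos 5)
Your proposal is correct and follows exactly the argument the paper gives: realize $\acy(X)$ as $\Ker(X\otimes-)$, note this functor is exact and coproduct-preserving so that Proposition~\ref{prop:loc}(1)$\Rightarrow$(2) places it in the scope of Theorem~\ref{thm:set}, and conclude. You have merely made the intermediate step explicit; no further comment is needed.
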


\begin{proof} 
  The Bousfield class of an object $X$ is the kernel of the functor
  $X\otimes -$, which is exact and preserves coproducts. It thus
  follows from Proposition~\ref{prop:loc} and Theorem~\ref{thm:set}
  that the collection of such localizing subcategories forms a set of
  cardinality at most $2^{2^{|\sfT^\alpha|}}$.
\end{proof}

\subsection*{The Bousfield lattice}

A \emph{lattice} is by definition a partially ordered set $\Lambda$
with the property that for each pair of elements $a,b$ in $\Lambda$
there is a supremum, denoted $a\vee b$, and an infimum, denoted
$a\wedge b$. A lattice $\Lambda$ is \emph{complete} if for any subset
$A\subseteq\Lambda$ the supremum $\bigvee_{a\in A} a$ and the infimum
$\bigwedge_{a\in A} a$ exist.  In any partially ordered set the
infimum can be expressed as a supremum and vice versa. For instance,
\[ 
\bigwedge_{a\in A} a= \bigvee_{b\in B}b \qquad \text{where } B=\{b\in\Lambda\mid b\leq a \text{ for all }a\in A\}\,.
\]
Thus, $\Lambda$ is complete if every subset in it has a supremum;
equivalently, if every subset has an infimum. In any lattice, we write
$0$ for the unique minimal element and $1$ for the unique maximal
element, provided they exist.

Let $\sfT$ be a well generated tensor triangulated category and denote
by $\Acy(\sfT)$ the set of Bousfield classes in $\sfT$. As in
\cite{Bousfield:1979a}, there is a partial order $\le$ on $\Acy(\sfT)$
given by:
\[ 
\acy(X)\le \acy(Y)\quad \text{when}\quad \acy(X)\supseteq \acy(Y)\,.
\]
For any set of objects $X_i$ in $\sfT$, one has
\[ 
\bigvee_{i} \acy(X_{i}) =\acy(\coprod_{i}X_{i})\,.
\] 
Thus $\Acy(\sfT)$ is a complete lattice and we call it the \emph{Bousfield lattice} of $\sfT$. 

We consider also the collection of tensor closed localizing
subcategories of the form $\tloc(X)$, and denote it
$\Tloc(\sfT)$. This time the partial order considered is the obvious
one, namely, the one given by inclusion. Without additional hypotheses
we do not know whether $\Tloc(\sfT)$ is a set or a proper class; see
however Corollary~\ref{cor:locset}.

In $\Tloc(\sfT)$ any subset has a supremum, given as before by
\[ 
\bigvee_{i} \tloc(X_{i}) = \tloc(\coprod_{i}X_{i})\,.
\] 
The infimum has the following explicit description.

\begin{lemma} 
Assume $\sfT$ is well generated. Then there is in $\Tloc(\sfT)$ an equality
\[ 
\bigwedge_i \tloc(X_i)=\bigcap_i\tloc(X_i)
\]
for every set of objects $X_i\in\sfT$.
\end{lemma}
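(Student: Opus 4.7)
The plan is to show that $\sfU:=\bigcap_i\tloc(X_i)$ is of the form $\tloc(Y)$ for some object $Y\in\sfT$. Once this is in hand, $\sfU$ lies in $\Tloc(\sfT)$ and is visibly the largest element contained in every $\tloc(X_i)$, hence equals the infimum $\bigwedge_i\tloc(X_i)$. Tensor-closedness and localizing-ness of $\sfU$ are inherited from the intersectands; all the real work is in producing a single generator.

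The key preliminary is that each $\tloc(X_i)$ is, as a localizing subcategory, generated by a single object. Indeed, a routine argument using $\sfT=\Loc(\sfT^\alpha)$ together with the exactness and coproduct-preservation of $\otimes$ shows that $\tloc(X_i)$ equals the localizing subcategory generated by $\{C\otimes X_i:C\in\sfT^\alpha\}$, and hence by the single object $Z_i$ obtained as the coproduct over a small skeleton of $\sfT^\alpha$. By the implication (4)$\Rightarrow$(3) of Proposition~\ref{prop:loc} one can therefore realise $\tloc(X_i)=\Ker H_i$ for a cohomological coproduct-preserving functor $H_i\col\sfT\to\sfA_i$, where $\sfA_i$ is abelian, has set-indexed coproducts, and has exact $\beta_i$-filtered colimits for some regular cardinal $\beta_i$.

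Next, fix a regular cardinal $\beta\ge\alpha$ exceeding every $\beta_i$, which is available because the indexing is over a set, and form
\[
H:=\prod_iH_i\col\sfT\longrightarrow \prod_i\sfA_i.
\]
The target is abelian, with coproducts and $\beta$-filtered colimits computed componentwise; exactness of the latter transfers from each $\sfA_i$, since any $\beta$-filtered category is in particular $\beta_i$-filtered. The functor $H$ is cohomological, preserves coproducts, and satisfies $\Ker H=\bigcap_i\Ker H_i=\sfU$. The converse implication (3)$\Rightarrow$(4) of Proposition~\ref{prop:loc} then yields an object $Y\in\sfT$ such that $\sfU$ is the smallest localizing subcategory containing $Y$. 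Since $\sfU$ is tensor closed and contains $Y$, one has $\tloc(Y)\subseteq\sfU$; conversely $\tloc(Y)$ is a localizing subcategory containing $Y$, so $\sfU\subseteq\tloc(Y)$. Hence $\sfU=\tloc(Y)$. The one delicate step I anticipate is the preliminary single-generator claim for each $\tloc(X_i)$; the rest is routine bookkeeping with products of abelian categories combined with Proposition~\ref{prop:loc}.
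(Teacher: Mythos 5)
Your proof is correct and follows essentially the same route as the paper: realise each $\tloc(X_i)$ as $\Ker H_i$ via Proposition~\ref{prop:loc}, take the product functor $H=\prod_iH_i$ into $\prod_i\sfA_i$, and apply the proposition again to recognise $\bigcap_i\tloc(X_i)=\Ker H$ as a $\tloc(Y)$. You are in fact slightly more careful than the paper, which leaves implicit both the preliminary step that $\tloc(X_i)$ is generated as a (plain) localizing subcategory by the single object $\coprod_{C\in\sfT^\alpha}C\otimes X_i$ — needed to invoke condition (4) of the proposition — and the final passage from $\Loc(Y)$ back to $\tloc(Y)$.
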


\begin{proof} One needs to find an object $X\in\sfT$ such that
$\bigcap_i\tloc(X_i)=\tloc(X)$. It follows from
Proposition~\ref{prop:loc} that there are regular cardinals $\beta_i$
and cohomological functors $H_i\col\sfT\to\sfA_i$ such that $\sfA_i$
has exact $\beta_i$-filtered colimits and $\tloc(X_i)=\Ker H_i$ for
all $i$. Put $\sfA=\prod_i\sfA_i$ and observe that $\sfA$ has
$\beta$-exact filtered colimits for $\beta=\sum_i\beta_i$. Clearly,
$\bigcap_i\tloc(X_i)=\Ker H$ for the functor $H\col\sfT\to\sfA$ taking
an object $Y$ to $(H_iY)$. Thus $\Ker H=\tloc(X)$ for some $X\in\sfT$
by Proposition~\ref{prop:loc}.
\end{proof}

\begin{remark}
  If $\Tloc(\sfT)$ is a set, then every tensor closed localizing
  subcategory $\sfS\subseteq\sfT$ belongs to $\Tloc(\sfT)$, since
  $\sfS=\bigvee_{X\in\sfS}\tloc(X)$.
\end{remark}

\section{Stratification}
\label{se:stratification}

In this section we determine the structure of the Bousfield lattice,
and of the lattice of localizing subcategories, for some classes of
tensor triangulated categories. This involves the stratification
property introduced and studied in
\cite{Benson/Iyengar/Krause:2008b,Benson/Iyengar/Krause:2009a}.

For the remainder of this section and the next, we assume
$(\sfT,\otimes,\one)$ is \emph{compactly generated tensor triangulated
  category}, by which we mean that the following conditions are
satisfied:
\begin{enumerate}
\item $\sfT$ is a compactly generated triangulated category.
\item The  unit $\one$ is compact and all compact objects are strongly dualizable.
\item The functor $\fHom(-,Y)$ is exact for each object $Y$ in $\sfT$.
\end{enumerate}
Here, $\fHom(X,-)$ denotes for each object $X$ in $\sfT$ the right
adjoint of the tensor functor $X\otimes-\col\sfT\to\sfT$, which
exists by Brown representability. Thus
\begin{equation*}
\label{eq:adj} \Hom_\sfT(X \otimes Z,Y) \cong \Hom_\sfT(Z,\fHom(X,Y))
\end{equation*} 
for all objects $Y,Z$ in $\sfT$. We write $X^\vee=\fHom(X,\one)$, and the object $X$ is
\emph{strongly dualizable} if the canonical morphism
\[
X^\vee \otimes Y \to \fHom(X,Y)
\] 
is an isomorphism for all $Y$ in $\sfT$.

\subsection*{Actions} 
Fix a graded-commutative noetherian ring $R$ and a homomorphism
$R\to \End_\sfT^*(\one)$ into the graded endomorphism ring of
$\one$. In this way $R$ acts on $\sfT$, that is, the graded abelian
group
\[ 
\Hom^*_\sfT(X,Y)=\bigoplus_{n\in\bbZ}\Hom_\sfT(X,\Si^nY)
\] 
is an $R$-module, for all objects $X,Y$.

Let $\Spec R$ denote the set of graded prime ideals in $R$. In
\cite[\S5]{\bik:2008a} and \cite[\S4]{\bik:2010a}, we constructed for
each $\fp\in\Spec R$ an adjoint pair of exact functors
$\gam_\fp\col\sfT\to\sfT$ and $\lam^\fp\col\sfT\to\sfT$, and defined
for each object $X$ \emph{support} and \emph{cosupport} as follows:
\begin{align*} \supp_R(X)&=\{\fp\in\Spec R\mid\gam_\fp X\neq 0\}\\
\cosupp_R(X)&=\{\fp\in\Spec R\mid\lam^\fp X\neq 0\}.
\end{align*} Note that $\gam_\fp X\cong \gam_\fp\one\otimes X$ and
$\lam^\fp X\cong \fHom(\gam_\fp\one,X)$.

\begin{lemma}
\label{lem:supp} 
Given objects $X,Y$ in $\sfT$, there are implications:
\[ 
\tloc(X)\subseteq \tloc(Y) \quad\implies\quad \acy(X)\le \acy(Y)
\quad\implies\quad \supp_R(X)\subseteq\supp_R(Y)\,.
\]
\end{lemma}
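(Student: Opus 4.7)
My plan is to prove the two implications separately, using in both cases the observation that a Bousfield class can be thought of as the kernel of a tensor functor, together with the explicit description $\gam_\fp X \cong \gam_\fp\one \otimes X$ for the second one. Neither step should present real difficulty; the main thing is to unwind the definitions correctly, in particular remembering that $\acy(X)\le \acy(Y)$ means the reverse containment $\acy(X)\supseteq\acy(Y)$.

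For the first implication, suppose $\tloc(X)\subseteq\tloc(Y)$ and fix $Z\in\acy(Y)$. I want to show $Z\in\acy(X)$, i.e.\ $X\otimes Z=0$. The key observation is that the kernel
\[
\sfK_Z=\{W\in\sfT\mid W\otimes Z=0\}
\]
of the functor $-\otimes Z\col\sfT\to\sfT$ is itself a tensor closed localizing subcategory of $\sfT$: it is closed under set-indexed coproducts, triangles, and shifts because $-\otimes Z$ is exact and coproduct-preserving, and it is tensor closed because for $W\in\sfK_Z$ and any $A\in\sfT$ one has $(A\otimes W)\otimes Z\cong A\otimes(W\otimes Z)=0$. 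Since $Z\in\acy(Y)$ means $Y\in\sfK_Z$, minimality gives $\tloc(Y)\subseteq\sfK_Z$, hence $X\in\tloc(X)\subseteq\tloc(Y)\subseteq\sfK_Z$, which is what we wanted.

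For the second implication, suppose $\acy(X)\le\acy(Y)$, i.e.\ $\acy(Y)\subseteq\acy(X)$, and let $\fp\in\supp_R(X)$; I want to show $\fp\in\supp_R(Y)$. Using the identity $\gam_\fp V\cong \gam_\fp\one \otimes V$ recorded just before the lemma, we can rewrite the condition $\fp\notin\supp_R(V)$ as $\gam_\fp\one\in\acy(V)$. Arguing contrapositively, if $\fp\notin\supp_R(Y)$ then $\gam_\fp\one\in\acy(Y)\subseteq\acy(X)$, whence $\gam_\fp X\cong\gam_\fp\one\otimes X=0$ and $\fp\notin\supp_R(X)$, contradiction. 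This completes the proof.

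The only mildly delicate point—and even it is routine—is checking that $\sfK_Z$ is tensor closed and localizing, which I would state explicitly since the conclusion of the first implication rests on applying the minimality of $\tloc(Y)$ against precisely this subcategory. No result beyond what has been set up in the preceding sections is needed.
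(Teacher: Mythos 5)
Your proof is correct and follows essentially the same route as the paper: the first implication rests on the observation that the kernel of $-\otimes Z$ is a tensor closed localizing subcategory (the paper phrases this as $W\otimes-$ being exact and coproduct preserving), and the second on the equivalence $\fp\in\supp_R(X)\iff\gam_\fp\one\notin\acy(X)$ coming from $\gam_\fp X\cong\gam_\fp\one\otimes X$. Your write-up merely makes explicit the verification that $\sfK_Z$ is tensor closed and localizing, which the paper leaves to the reader.
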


\begin{proof} 
The first implication follows from the fact that the functor $W\otimes -$ is exact and preserves set-indexed coproducts, for any object $W$ in $\sfT$.

For the second implication, observe that for any object $X$ in $\sfT$,
a prime $\fp$ is in $\supp_R(X)$ if and only if
$\gam_\fp\one\not\in\acy(X)$.
\end{proof}

Next we establish converses to the preceding lemma. We begin with a reformulation of some results from \cite{\bik:2008b,\bik:2010a}. Recall that
\[ 
\supp_R(\sfT)=\{\fp\in\Spec R\mid \text{$\fp\in\supp_R(X)$ for some $X\in\sfT$}\}\,.
\] 
This set coincides with $\supp_R(\one)$, since $\gam_{\fp}X\cong \gam_\fp\one\otimes X$.

\subsection*{Function objects}
The result below provides a formula for the cosupport of function
objects. In the language of \cite{\bik:2009a}, condition (1) is the
statement that $\sfT$ is \emph{stratified} by $R$, as a tensor
triangulated category.

\begin{theorem}
\label{thm:comparison} 
The following conditions are equivalent:
\begin{enumerate}
\item $\tloc(\gam_{\fp}\one)$ is a minimal non-zero element of
$\Tloc(\sfT)$, for each $\fp\in\supp_R(\sfT)$.
\item $\cosupp_R(\fHom(X,Y))=\supp_R(X)\cap \cosupp_R(Y)$ for all
$X,Y$ in $\sfT$.
\item $\tloc(X)=\{Y\in\sfT\mid \supp_R(Y)\subseteq\supp_R(X)\}$ for all
$X$ in $\sfT$.
\item $\tloc(X)\subseteq\tloc(Y) \iff \supp_R(X)\subseteq\supp_R(Y)$, for all $X,Y$ in
$\sfT$.
\end{enumerate}
\end{theorem}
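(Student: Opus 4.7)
The plan is to prove the cycle $(1)\Rightarrow(3)\Rightarrow(4)\Rightarrow(1)$ and to establish $(1)\iff(2)$ separately. All four conditions are standard characterizations of ``stratification by $R$'' in the sense of \cite{\bik:2009a}, so the argument will largely consist of combining the local-to-global principle from \cite{\bik:2008a} with the minimality hypothesis in (1).

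The implications $(3)\Rightarrow(4)\Rightarrow(1)$ are the quick ones. The forward half of (4) is Lemma~\ref{lem:supp}; the reverse follows from (3), since $\supp_R(X)\subseteq\supp_R(Y)$ places $X$ in the class described in (3) for $Y$, giving $X\in\tloc(Y)$ and hence $\tloc(X)\subseteq\tloc(Y)$. For $(4)\Rightarrow(1)$, let $Z\neq 0$ satisfy $\tloc(Z)\subseteq\tloc(\gam_{\fp}\one)$. Lemma~\ref{lem:supp} forces $\supp_R(Z)\subseteq\{\fp\}$, and the local-to-global principle rules out $\supp_R(Z)=\varnothing$, so $\supp_R(Z)=\{\fp\}=\supp_R(\gam_{\fp}\one)$. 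Condition (4) then yields $\tloc(Z)=\tloc(\gam_{\fp}\one)$, which is minimality.

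The main obstacle is $(1)\Rightarrow(3)$. One inclusion is Lemma~\ref{lem:supp}. For the reverse, fix $X,Y$ with $\supp_R(Y)\subseteq\supp_R(X)$ and aim to show $Y\in\tloc(X)$. The local-to-global principle reduces this to verifying that $\gam_\fp Y\in\tloc(X)$ for each $\fp\in\Spec R$. When $\fp\notin\supp_R(Y)$ this is trivial; when $\fp\in\supp_R(Y)\subseteq\supp_R(X)$, both $\gam_\fp Y$ and $\gam_\fp X$ are non-zero objects of $\tloc(\gam_{\fp}\one)$, so minimality in (1) forces
\[
\tloc(\gam_\fp Y)=\tloc(\gam_{\fp}\one)=\tloc(\gam_\fp X).
\]
Using $\gam_\fp X\cong\gam_\fp\one\otimes X\in\tloc(X)$, this gives $\gam_\fp Y\in\tloc(X)$ as required. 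Assembling the pieces yields $Y\in\tloc(X)$.

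For $(1)\iff(2)$: the identification
\[
\lam^\fp\fHom(X,Y)\cong\fHom(\gam_\fp X,\lam^\fp Y),
\]
obtained from the tensor--Hom adjunction together with $\gam_\fp\cong\gam_\fp\one\otimes-$ and $\lam^\fp\cong\fHom(\gam_\fp\one,-)$, reduces the cosupport of $\fHom(X,Y)$ at $\fp$ to the question of when this internal Hom is non-zero. Under the stratification condition (1), both $\gam_\fp X$ (if non-zero) and $\lam^\fp Y$ (if non-zero) are ``supported only at $\fp$'', so the minimality ensures $\fHom(\gam_\fp X,\lam^\fp Y)\neq 0$ precisely when $\fp\in\supp_R(X)\cap\cosupp_R(Y)$, yielding (2). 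Conversely, specializing (2) and combining it with the orthogonality $\acy(X)^\perp=\{Y\mid\cosupp_R(Y)\subseteq\supp_R(X)\}$ noted in the introduction pins down the Bousfield class of $\gam_{\fp}\one$ tightly enough to deduce its minimality, closing the circle.
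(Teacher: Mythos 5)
Your cycle $(1)\Rightarrow(3)\Rightarrow(4)\Rightarrow(1)$ is correct and is essentially the argument of \cite[Theorem~3.8]{\bik:2008b}, which the paper simply cites; the only inputs you need to name precisely are the local-global principle $\tloc(Y)=\tloc(\{\gam_\fp Y\})$ and the detection property $Z\neq 0\Rightarrow\supp_R(Z)\neq\varnothing$, both available for noetherian $R$ by \cite[Theorems~3.6 and 5.2]{\bik:2008a}. (The paper itself disposes of the whole theorem by citation, so you are attempting strictly more.)

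The gap is in $(1)\Leftrightarrow(2)$, which is the content of \cite[Theorem~9.5]{\bik:2010a} and is where the real work lies. For $(1)\Rightarrow(2)$, the identification $\lam^\fp\fHom(X,Y)\cong\fHom(\gam_\fp X,\lam^\fp Y)$ gives the inclusion $\subseteq$ unconditionally, but the reverse inclusion requires showing that $\gam_\fp X\neq 0$ and $\lam^\fp Y\neq 0$ force $\fHom(\gam_\fp X,\lam^\fp Y)\neq 0$; you assert this from ``minimality'' without an argument. It can be repaired: $\Ker\fHom(-,\lam^\fp Y)$ is a tensor closed localizing subcategory, so if it contained the non-zero object $\gam_\fp X$ it would contain $\tloc(\gam_\fp X)=\tloc(\gam_\fp\one)$ by (1), whence $\lam^\fp\lam^\fp Y=0$ and so $\lam^\fp Y=0$ by idempotence of $\lam^\fp$ --- but this step needs to be said. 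The converse $(2)\Rightarrow(1)$ as you sketch it does not work: you aim at minimality of the Bousfield class $\acy(\gam_\fp\one)$ in $\Acy(\sfT)$, which is condition (1) of Theorem~\ref{thm:comparison_bclass} and is strictly weaker than minimality of $\tloc(\gam_\fp\one)$ in $\Tloc(\sfT)$; and the description $\acy(X)^\perp=\{Y\mid\cosupp_R(Y)\subseteq\supp_R(X)\}$ you invoke is Proposition~\ref{prop:tp=ac}, itself equivalent to stratification, so using it here is circular. A correct route is: for $0\neq\tloc(Z)\subseteq\tloc(\gam_\fp\one)$ one has $\tloc(Z)^\perp=\{Y\mid\fHom(Z,Y)=0\}$, which by (2) and the detection property for cosupport equals $\{Y\mid\lam^\fp Y=0\}=\tloc(\gam_\fp\one)^\perp$, and then $\tloc(Z)={}^\perp(\tloc(Z)^\perp)$ (via Proposition~\ref{prop:loc}) yields $\tloc(Z)=\tloc(\gam_\fp\one)$. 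As written, this half of the theorem is not proved.
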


\begin{proof} The equivalence of (1) and (2) is
\cite[Theorem~9.5]{\bik:2010a}, while that of (1), (3) and (4) follows
from \cite[Theorem~3.8]{\bik:2008b}.
\end{proof}

\begin{corollary}
\label{cor:locset} 
Suppose that $\sfT$ is stratified as a tensor triangulated category by
a graded-commutative noetherian ring $R$. Then the lattice of tensor
closed localizing subcategories of $\sfT$ is isomorphic to the
lattice of subsets of $\supp_R(\sfT)$ via the map sending $\tloc(X)$
to $\supp_R(X)$.\qed
\end{corollary}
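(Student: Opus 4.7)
The plan is to exhibit an explicit inverse to the purported map $\tloc(X)\mapsto\supp_R(X)$ and to verify that the resulting bijection preserves the order. For a subset $V\subseteq\supp_R(\sfT)$ I would set
\[
X_V := \coprod_{\fp\in V}\gam_\fp\one.
\]
Since support commutes with set-indexed coproducts and $\supp_R(\gam_\fp\one)=\{\fp\}$ for $\fp\in\supp_R(\sfT)$, this yields $\supp_R(X_V)=V$. Theorem~\ref{thm:comparison}(3) then identifies $\tloc(X_V)$ with $\{Y\in\sfT\mid\supp_R(Y)\subseteq V\}$, so the assignment $V\mapsto\tloc(X_V)$ is a well-defined right inverse to $\tloc(X)\mapsto\supp_R(X)$ on subsets of $\supp_R(\sfT)$ (by Theorem~\ref{thm:comparison}(4)).

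The key step is to show that every tensor closed localizing subcategory $\sfC$ of $\sfT$ has the form $\tloc(X_V)$ for a unique $V$. I would set $V(\sfC):=\bigcup_{X\in\sfC}\supp_R(X)$ and verify $\sfC=\tloc(X_{V(\sfC)})$. The inclusion $\sfC\subseteq\tloc(X_{V(\sfC)})$ is immediate from the description above. For the reverse inclusion, let $Y$ satisfy $\supp_R(Y)\subseteq V(\sfC)$. For each $\fp\in\supp_R(Y)$ choose $X\in\sfC$ with $\fp\in\supp_R(X)$; then Theorem~\ref{thm:comparison}(4) gives $\tloc(\gam_\fp\one)\subseteq\tloc(X)\subseteq\sfC$, so $\gam_\fp\one\in\sfC$. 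The object $W:=\coprod_{\fp\in\supp_R(Y)}\gam_\fp\one$ satisfies $\supp_R(W)=\supp_R(Y)$, so Theorem~\ref{thm:comparison}(4) again gives $\tloc(Y)=\tloc(W)\subseteq\sfC$, hence $Y\in\sfC$.

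Order preservation in both directions is then routine: $\sfC\subseteq\sfC'$ implies $V(\sfC)\subseteq V(\sfC')$ by definition, while $V\subseteq V'$ implies $\tloc(X_V)\subseteq\tloc(X_{V'})$ by Theorem~\ref{thm:comparison}(4). Since an order-preserving bijection between complete lattices is automatically a lattice isomorphism, this completes the proof. The main obstacle is the containment $\tloc(X_{V(\sfC)})\subseteq\sfC$; this is where stratification does the actual work, namely in the reconstruction of an arbitrary object $Y\in\sfT$ from the local pieces $\gam_\fp\one$ with $\fp\in\supp_R(Y)$ provided by Theorem~\ref{thm:comparison}.
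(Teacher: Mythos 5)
Your proof is correct and follows essentially the route the paper intends: the corollary is left as an immediate consequence of Theorem~\ref{thm:comparison}, and your inverse $V\mapsto\tloc(\coprod_{\fp\in V}\gam_\fp\one)$ is precisely the map used in the paper's proof of the analogous Corollary~\ref{cor:bousfield-lattice} for Bousfield classes. The only quibble is that an order-preserving bijection of complete lattices need not be a lattice isomorphism in general, but this is harmless here since you in fact verify that both your map and its inverse are order-preserving.
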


Next we formulate the analogue of Theorem~\ref{thm:comparison} for
Bousfield classes. 

\subsection*{Tensor products}
The following result provides a formula for the support of tensor
products. We say that the \emph{tensor product formula} holds in
$\sfT$ when the conditions below are satisfied.

\begin{theorem}
\label{thm:comparison_bclass} 
The following conditions are equivalent:
\begin{enumerate}
\item $\acy(\gam_{\fp}\one)$ is a minimal non-zero element of
$\Acy(\sfT)$, for each $\fp\in\supp_R(\sfT)$.
\item $\supp_{R}(X\otimes Y)=\supp_R(X)\cap \supp_R(Y)$ for all $X,Y$
in $\sfT$.
\item $\acy(X) = \{Y\in\sfT\mid\supp_R(Y)\cap\supp_R(X)=\varnothing\}$
for all $X$ in $\sfT$.
\item $\acy(X)\leq \acy(Y) \iff \supp_R(X)\subseteq \supp_R(Y)$, for all
$X,Y$ in $\sfT$.
\end{enumerate} Moreover, these conditions hold when $\sfT$ is
stratified by $R$.
\end{theorem}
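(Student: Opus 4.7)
The plan is to mirror the proof of Theorem~\ref{thm:comparison} and establish the equivalences via the cycle $(1)\Rightarrow(2)\Rightarrow(3)\Rightarrow(4)\Rightarrow(1)$, handling the final clause separately. Three ambient facts will be used throughout: the identity $\gam_\fp(X\otimes Y)\cong\gam_\fp\one\otimes X\otimes Y$, which immediately yields the always-true inclusion $\supp_R(X\otimes Y)\subseteq\supp_R(X)\cap\supp_R(Y)$; the local-global principle for support, so that $X=0$ if and only if $\supp_R(X)=\varnothing$; and the equality $\supp_R(\gam_\fp\one)=\{\fp\}$ for each $\fp\in\supp_R(\sfT)$, since the left-hand side is contained in $\{\fp\}$ by construction and non-empty because $\gam_\fp\one\neq 0$.

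The easy implications proceed as follows. For $(2)\Rightarrow(3)$, unfold $Y\in\acy(X)$ to $X\otimes Y=0$, which by the local-global principle is equivalent to $\supp_R(X\otimes Y)=\varnothing$, and then apply (2). For $(3)\Rightarrow(4)$, the relation $\acy(X)\le\acy(Y)$ translates under (3) to the implication that $\supp_R(Z)\cap\supp_R(Y)=\varnothing$ forces $\supp_R(Z)\cap\supp_R(X)=\varnothing$ for every $Z\in\sfT$; testing against $Z=\gam_\fp\one$ as $\fp$ ranges over $\supp_R(X)$ yields $\supp_R(X)\subseteq\supp_R(Y)$, and the reverse direction is immediate. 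For $(4)\Rightarrow(1)$, if $W\neq 0$ satisfies $\acy(W)\le\acy(\gam_\fp\one)$ for some $\fp\in\supp_R(\sfT)$, Lemma~\ref{lem:supp} forces $\supp_R(W)\subseteq\{\fp\}$ and non-emptiness gives $\supp_R(W)=\supp_R(\gam_\fp\one)$; condition (4) then yields $\acy(W)=\acy(\gam_\fp\one)$.

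The crux, and the only step where hypothesis (1) is genuinely used, is $(1)\Rightarrow(2)$; the only non-trivial direction is $\supp_R(X)\cap\supp_R(Y)\subseteq\supp_R(X\otimes Y)$. Here is the plan: given $\fp$ in the intersection, $\gam_\fp\one\otimes X\cong\gam_\fp X$ is non-zero, and $\acy(\gam_\fp\one)\subseteq\acy(\gam_\fp\one\otimes X)$ as subcategories, because any $\gam_\fp\one$-acyclic object remains so after tensoring with $X$, hence is $(\gam_\fp\one\otimes X)$-acyclic. Thus $\acy(\gam_\fp\one\otimes X)\le\acy(\gam_\fp\one)$ in the Bousfield order and is non-zero, so minimality in (1) forces equality. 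Since $\fp\in\supp_R(Y)$ means $Y\notin\acy(\gam_\fp\one)$, we conclude $Y\notin\acy(\gam_\fp\one\otimes X)$, i.e.\ $\gam_\fp(X\otimes Y)\neq 0$, as required.

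For the final clause, stratification asserts precisely condition (1) of Theorem~\ref{thm:comparison}, hence also its equivalent (4) there. Combining that equivalence with Lemma~\ref{lem:supp} yields the cycle $\tloc(X)\subseteq\tloc(Y)\Rightarrow\acy(X)\le\acy(Y)\Rightarrow\supp_R(X)\subseteq\supp_R(Y)\Rightarrow\tloc(X)\subseteq\tloc(Y)$, so all three containments coincide; in particular condition (4) of the present theorem holds, and the whole statement follows from the equivalences proved above.
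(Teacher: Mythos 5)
Your proposal is correct and follows essentially the same route as the paper: the same cycle $(1)\Rightarrow(2)\Rightarrow(3)\Rightarrow(4)\Rightarrow(1)$, with the key step $(1)\Rightarrow(2)$ resting on the identification $\acy(\gam_\fp X)=\acy(\gam_\fp\one)$ for $\fp\in\supp_R(X)$ via minimality, and the final clause deduced from Theorem~\ref{thm:comparison}(4) together with Lemma~\ref{lem:supp}. The only cosmetic differences are that you phrase $(1)\Rightarrow(2)$ contrapositively and that in $(3)\Rightarrow(4)$ you reprove the direction already covered by Lemma~\ref{lem:supp}; neither affects correctness.
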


\begin{proof} 
(1) $\Rightarrow$ (2): When (1) holds, for each $X\in\sfT$ and
$\fp\in\supp_R(X)$, one has
\[ \acy(\gam_{\fp}X)=\acy(\gam_{\fp}\one)\,.
\] Indeed, the isomorphism $\gam_{\fp}X\cong \gam_{\fp}\one\otimes X$
implies $\acy(\gam_{\fp} X)\le \acy(\gam_{\fp} \one)$. The
equality holds since $\acy(\gam_{\fp}X)\neq \sfT$.

Now for any $X$ and $Y$ one has $\supp_{R}(X\otimes Y)\subseteq \supp_R(X)\cap\supp_R(Y)$. When $\fp$ is not in
$\supp_{R}(X\otimes Y)$, one gets
\[ 
\gam_{\fp}(X) \otimes Y\cong \gam_{\fp}(X\otimes Y) =0\,,
\] 
so that $Y\in \acy(\gam_{\fp}X)$; if in addition $\fp$ is in $\supp_{R}(X)$, then since $\acy(\gam_{\fp}X)=\acy(\gam_\fp\one)$, it follows that $\fp\not\in\supp_R(Y)$.  Thus formula in (2) holds.

(2) $\Rightarrow$ (3): This is immediate, for $X\otimes Y=0$ if and
only if $\supp_{R}(X\otimes Y)=\varnothing$, by
\cite[Theorem~5.2]{\bik:2008a}.

(3) $\Rightarrow$ (4): It is clear that when (3) holds and $\supp_R(X)\subseteq \supp_R(Y)$, one has $\acy(X)\leq
\acy(Y)$. The reverse implication holds always; see Lemma~\ref{lem:supp}.

(4) $\Rightarrow$ (1): This is clear, for $\supp_{R}(\gam_{\fp}\one)=\{\fp\}$ for $\fp\in\supp_{R}(\sfT)$.

For the last conclusion, observe that part (4) of Theorem~\ref{thm:comparison} in combination with
Lemma~\ref{lem:supp} implies part (4) of the present theorem.
\end{proof}

\begin{corollary}\label{cor:bousfield-lattice} 
  Suppose that the tensor product formula holds in $\sfT$, for
  instance, when $\sfT$ is stratified by $R$ as a tensor triangulated
  category. Then the Bousfield lattice of $\sfT$ is isomorphic to the
  lattice of subsets of $\supp_R(\sfT)$ via the map sending $\acy(X)$
  to $\supp_R(X)$. In particular, for all $X,Y$ in $\sfT$,
\[ 
  \acy(X)\wedge\acy(Y) = \acy(X\otimes Y).
\]
\end{corollary}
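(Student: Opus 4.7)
The plan is to exhibit the map $\phi\col\Acy(\sfT)\to 2^{\supp_R(\sfT)}$ defined by $\phi(\acy(X))=\supp_R(X)$ and verify it is an isomorphism of complete lattices, then deduce the meet formula from Theorem~\ref{thm:comparison_bclass}(2). Throughout I assume condition~(4) of Theorem~\ref{thm:comparison_bclass} holds; the case where $\sfT$ is stratified is covered by the last clause of that theorem.

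First I would check that $\phi$ is well-defined and an order isomorphism onto its image. Well-definedness is immediate from condition~(4): if $\acy(X)=\acy(Y)$ then both inclusions $\acy(X)\le\acy(Y)$ and $\acy(Y)\le\acy(X)$ hold, so $\supp_R(X)=\supp_R(Y)$. The same condition shows $\acy(X)\le\acy(Y)\iff\supp_R(X)\subseteq\supp_R(Y)$, which gives both that $\phi$ is order-preserving and that $\phi$ is injective.

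Next I would establish surjectivity. Given any subset $U\subseteq\supp_R(\sfT)$, set
\[
X_U=\coprod_{\fp\in U}\gam_\fp\one\,.
\]
Since $\supp_R(\gam_\fp\one)=\{\fp\}$ for $\fp\in\supp_R(\sfT)$ and support commutes with coproducts, $\supp_R(X_U)=U$, so $\phi(\acy(X_U))=U$. Combined with the previous step, $\phi$ is an order isomorphism $\Acy(\sfT)\xra{\sim}2^{\supp_R(\sfT)}$; since both sides are complete lattices and $\phi$ preserves arbitrary suprema (as $\supp_R(\coprod_i X_i)=\bigcup_i\supp_R(X_i)$ matches $\bigvee_i\acy(X_i)=\acy(\coprod_i X_i)$), any order isomorphism of complete lattices automatically preserves infima as well.

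Finally, for the meet formula, I would translate via $\phi$: the infimum $\acy(X)\wedge\acy(Y)$ corresponds to $\supp_R(X)\cap\supp_R(Y)$ in the subset lattice, and by Theorem~\ref{thm:comparison_bclass}(2) this equals $\supp_R(X\otimes Y)=\phi(\acy(X\otimes Y))$. Injectivity of $\phi$ then gives $\acy(X)\wedge\acy(Y)=\acy(X\otimes Y)$. I do not anticipate any genuine obstacle: the entire argument is a bookkeeping exercise that transfers the equivalent conditions in Theorem~\ref{thm:comparison_bclass} into the lattice-theoretic language, with the only non-formal input being the existence of objects $\gam_\fp\one$ realizing each singleton support, which is already built into the machinery from \cite{\bik:2008a,\bik:2010a}.
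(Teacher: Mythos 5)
Your proof is correct. It exhibits the same inverse map as the paper, namely $\mcU\mapsto\acy(\coprod_{\fp\in\mcU}\gam_\fp\one)$, but verifies bijectivity by a slightly different route: you get injectivity of $\acy(X)\mapsto\supp_R(X)$ from condition~(4) of Theorem~\ref{thm:comparison_bclass} and surjectivity from the computation $\supp_R(\coprod_{\fp\in\mcU}\gam_\fp\one)=\mcU$, then invoke the general fact that an order isomorphism of complete lattices preserves all infima and suprema. The paper instead checks directly that the two maps are mutually inverse, using the local-global principle $\tloc(X)=\bigvee_\fp\tloc(\gam_\fp X)$ from \cite[Theorem~7.2]{Benson/Iyengar/Krause:2009a} together with the minimality of $\acy(\gam_\fp\one)$ (condition~(1)) to obtain the explicit decomposition $\acy(X)=\bigvee_{\fp\in\supp_R(X)}\acy(\gam_\fp\one)$. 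Your argument is marginally more self-contained, needing only the equivalences already established in Theorem~\ref{thm:comparison_bclass} plus the standard facts $\supp_R(\gam_\fp\one)=\{\fp\}$ and compatibility of support with coproducts; the paper's argument buys the explicit formula for $\acy(X)$ as a join of the minimal classes $\acy(\gam_\fp\one)$, which is reused later (for instance in Example~\ref{ex:loc}). Your derivation of the meet formula from condition~(2) via the isomorphism is also fine.
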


\begin{proof}
  The inverse map sends $\mcU\subseteq\supp_R(\sfT)$ to
  $\acy(\coprod_{\fp\in\mcU}\gam_\fp\one)$.  Indeed, note that
\[
\tloc(X)=\bigvee_{\fp}\tloc(\gam_\fp X),
\]
by \cite[Theorem~7.2]{\bik:2009a}, where $\fp$ runs through all primes in $\Spec R$. This implies
\[
\acy(X)=\bigvee_{\fp\in\supp_R(X)}\acy(\gam_\fp X)=\bigvee_{\fp\in\supp_R(X)}\acy(\gam_\fp \one),
\] 
where the minimality of $\acy(\gam_\fp \one)$ is used for the second equality. It follows that the maps between $\Acy(\sfT)$ and subsets of $\supp_R(\sfT)$ are mutually inverse.
\end{proof}

\begin{remark} 
  In \cite{Hovey/Palmieri/Strickland:1997a}, condition (1) of Theorem~\ref{thm:comparison} is formulated as Conjecture~6.1.2, while condition (1) of Theorem~\ref{thm:comparison_bclass} is formulated as Conjecture~6.1.3 (with some additional assumptions on $\sfT$).  A number of interesting consequences are proved in \cite[Theorem~6.1.5]{Hovey/Palmieri/Strickland:1997a}, including a classification of the thick subcategories of $\sfT^{\sfc}$.
\end{remark}

\begin{example}
\label{ex:ca} Let $A$ be a commutative noetherian ring. The derived
category of the category of $A$-modules is a compactly generated
tensor triangulated category with a natural $A$-action. It follows
from Neeman's work \cite{Neeman:1992a}, see also
\cite[Theorem~8.1]{\bik:2009a}, that this category is stratified by
$A$, that is to say, the equivalent conditions in
Theorem~\ref{thm:comparison} hold.
\end{example}

\begin{remark} In the context of Example~\ref{ex:ca}, Dwyer and
Palmieri~\cite[p.~429]{Dwyer/Palmieri:2008a} ask if
$\acy(X)=\acy(\bigoplus_nH^nX)$ for any complex $X$ in the derived
category of $A$. However, it is clear from
Theorem~\ref{thm:comparison_bclass} that this cannot hold in general,
for there exist complexes $X$ such $\supp_R(X)\ne
\supp_{R}(\bigoplus_nH^nX)$; see \cite[Example~9.4]{\bik:2008a}.
\end{remark}

\begin{example} 
  Let $k$ be a field and consider the category of $k$-linear maps
  $V\to W$. This is an abelian category, and the tensor product over $k$
  induces on it a symmetric monoidal structure with tensor identity
  $\one=(k\xra{\id} k)$. The corresponding derived category $\sfD$ is
  then a compactly generated tensor triangulated category. Set
  $R=\End_\sfD^*(\one)=k$. Then the tensor product formula does not
  hold in $\sfD$.

Take for instance $X=(k\to 0)$ and $Y=(0\to k)$. Then $X\otimes Y=0$, and so
\[ 
\supp_R(X\otimes Y)=\varnothing\neq\Spec R=\supp_R(X)\cap\supp_R(Y)\,.
\]
\end{example}

\section{Local objects}
\label{sec:Local objects}

Let $(\sfT,\otimes,\one)$ be a compactly generated tensor triangulated
category, as in Section~\ref{se:stratification}. We follow
\cite{Bousfield:1979a} and say for a given object $X$ that an object
$Y$ is \emph{$X$-local} if the functor $\Hom_{\sfT}(-,Y)$ annihilates
all $X$-acyclic objects. An object is \emph{$X$-acyclic} if it is
annihilated by $X\otimes -$.  In this section we interpret this
property in terms of support and cosupport of $X$ and $Y$,
respectively, when these notions are defined.

We begin with an elementary observation; recall that $X^\vee=\fHom(X,\one)$.

\begin{proposition}
\label{prop:compact-local}
When $X\in \sfT$ is compact, $\acy(X)=\acy(X^{\vee})$ and $X$ is $X$-local.
\end{proposition}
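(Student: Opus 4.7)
The plan is to exploit strong dualizability, which holds for $X$ by hypothesis~(2) since $X$ is compact. I would make use of three standard consequences: the isomorphism $X^\vee \otimes Y \cong \fHom(X,Y)$ for all $Y$ (stated explicitly in the excerpt); the biduality isomorphism $X \cong X^{\vee\vee}$; and the triangle (zig-zag) identities that present $X$ as a retract of $X \otimes X^\vee \otimes X$ and $X^\vee$ as a retract of $X^\vee \otimes X \otimes X^\vee$, via the evaluation and coevaluation maps attached to the duality.

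For the equality $\acy(X) = \acy(X^\vee)$, I would take $Y \in \acy(X)$, so that $X \otimes Y = 0$, and tensor the retract presentation of $X^\vee$ with $Y$. Then $X^\vee \otimes Y$ becomes a retract of $X^\vee \otimes X \otimes X^\vee \otimes Y$; by symmetry of $\otimes$ this ambient object is isomorphic to $X^\vee \otimes X^\vee \otimes (X \otimes Y) = 0$, forcing its retract $X^\vee \otimes Y$ to vanish as well. This gives $\acy(X) \subseteq \acy(X^\vee)$, and applying the same argument with $X$ replaced by $X^\vee$ (invoking $(X^\vee)^\vee \cong X$) yields the reverse inclusion.

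For the statement that $X$ is $X$-local, I would take an arbitrary $Y \in \acy(X)$, which by the first part also belongs to $\acy(X^\vee)$. Biduality rewrites $X$ as $\fHom(X^\vee, \one)$, and adjunction then gives
\[
\Hom_\sfT(Y, X) \cong \Hom_\sfT(Y, \fHom(X^\vee, \one)) \cong \Hom_\sfT(Y \otimes X^\vee, \one),
\]
and the right-hand side vanishes because $Y \otimes X^\vee = 0$. Since $\acy(X)$ is closed under shifts, this suffices to conclude that the functor $\Hom_\sfT(-,X)$ annihilates every $X$-acyclic object. I do not anticipate a real obstacle: the argument is a direct application of the formalism of strong duality in a symmetric monoidal triangulated category, and the only book-keeping required is careful use of the symmetry of $\otimes$ when rearranging tensor factors.
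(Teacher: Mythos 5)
Your proposal is correct and follows essentially the same route as the paper: the equality $\acy(X)=\acy(X^{\vee})$ via the retract coming from strong dualizability (the paper uses $X$ as a retract of $X\otimes X^{\vee}\otimes X$, you use the symmetric retraction of $X^{\vee}$, which is the same triangle-identity argument), and locality via biduality plus the adjunction $\Hom_{\sfT}(X^{\vee}\otimes Y,\one)\cong\Hom_{\sfT}(Y,X)$. No gaps.
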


\begin{proof}
The object $X$ is strongly dualizable and therefore a retract of $X\otimes
  X^{\vee}\otimes X$; see \cite[Proposition~III.1.2]{Lewis:1986a}.  It
  follows that
\[
\acy(X)\leq \acy(X\otimes X^\vee\otimes X)\leq \acy(X^{\vee})\,.
\]
Since $(X^{\vee})^{\vee}\cong X$, the inequalities above yield also
$\acy(X^{\vee})\leq \acy(X)$.

Now when $Y$ is $X$-acyclic, it is also $X^{\vee}$-acyclic, which
yields the equality below:
\[
\Hom_{\sfT}(Y,X)\cong \Hom_{\sfT}(Y,(X^{\vee})^{\vee})\cong\Hom_{\sfT}(X^{\vee}\otimes Y,\one)=0
\]
The first isomorphism holds as $(X^{\vee})^{\vee}\cong X$, and the
second one is by adjunction. Therefore $X$ is $X$-local, as claimed.
\end{proof}

The preceding result does not, in general, extend to non-compact
objects; see Example~\ref{ex:ca2}. A basic problem is to recognize
when an object is $X$-local, and this has a satisfactory answer if
$\sfT$ is stratified; this is explained next.

\subsection*{Cosupport}
Henceforth we assume that $\sfT$ is endowed with an action of a
graded-commutative noetherian ring $R$, as in
Section~\ref{se:stratification}. Given a subset
$\mcU\subseteq\Spec R$, consider full subcategories
\[ 
\sfT_\mcU =\{X\in\sfT\mid\supp_R(X)\subseteq\mcU\}\quad\text{and}\quad \sfT^\mcU = \{X\in\sfT\mid\cosupp_R(X)\subseteq\mcU\}\,.
\]
These subcategories are related to each other:

\begin{lemma}
\label{le:perp} 
Let $\mcU\subseteq\Spec R$ and $\mcU'=\Spec R\setminus\mcU$. Then
\[ 
(\sfT_\mcU)^\perp=\sfT^{\mcU '} \quad\text{and}\quad \sfT_\mcU={^\perp(\sfT^{\mcU '})}\,.
\]
\end{lemma}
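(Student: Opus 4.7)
The plan is to reduce everything to the adjunction $\Hom_\sfT(\gam_\fp X, Y) \cong \Hom_\sfT(X, \lam^\fp Y)$ recalled in Section~\ref{se:stratification}, together with two standard facts from \cite{\bik:2008a, \bik:2010a}: that $\supp_R(\gam_\fp X) \subseteq \{\fp\}$ and $\cosupp_R(\lam^\fp Y) \subseteq \{\fp\}$ for every $\fp$, and the local-to-global principle \cite[Theorem~7.2]{\bik:2009a} placing $X$ in the localizing subcategory generated by $\{\gam_\fp X : \fp \in \Spec R\}$. The second identity will follow from the first, so the main work is proving $(\sfT_\mcU)^\perp = \sfT^{\mcU'}$.

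To show $\sfT^{\mcU'} \subseteq (\sfT_\mcU)^\perp$, I would take $X \in \sfT_\mcU$ and $Y \in \sfT^{\mcU'}$ and verify $\Hom_\sfT(\gam_\fp X, Y) = 0$ for every $\fp$: if $\fp \notin \mcU$, then $\gam_\fp X = 0$ because $\fp \notin \supp_R(X)$; if $\fp \in \mcU$, then $\fp \notin \cosupp_R(Y)$, so $\lam^\fp Y = 0$, and the adjunction gives the vanishing. Since $\{Z \in \sfT : \Hom_\sfT(Z, Y) = 0\}$ is a localizing subcategory containing every $\gam_\fp X$, the local-to-global principle forces $X$ to lie in it. For the reverse inclusion, given $Y \in (\sfT_\mcU)^\perp$ and $\fp \in \mcU$, each $\gam_\fp X$ has support in $\{\fp\} \subseteq \mcU$ and so lies in $\sfT_\mcU$; adjunction then yields $\Hom_\sfT(X, \lam^\fp Y) \cong \Hom_\sfT(\gam_\fp X, Y) = 0$ for every $X \in \sfT$. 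Specializing to $X = \lam^\fp Y$ forces $\lam^\fp Y = 0$, so $\cosupp_R(Y) \subseteq \mcU'$.

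The inclusion $\sfT_\mcU \subseteq {}^\perp(\sfT^{\mcU'})$ then follows formally from the first identity via $\sfS \subseteq {}^\perp(\sfS^\perp)$. For the reverse, given $X \in {}^\perp(\sfT^{\mcU'})$ and $\fp \in \mcU'$, each $\lam^\fp Z$ has cosupport contained in $\{\fp\} \subseteq \mcU'$ and therefore lies in $\sfT^{\mcU'}$; adjunction yields $\Hom_\sfT(\gam_\fp X, Z) \cong \Hom_\sfT(X, \lam^\fp Z) = 0$ for every $Z$, and setting $Z = \gam_\fp X$ forces $\gam_\fp X = 0$, so $\fp \notin \supp_R(X)$. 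The only non-formal ingredient is the local-to-global principle, which is the expected main obstacle; everything else is direct manipulation of the adjunction together with the definitions of support and cosupport.
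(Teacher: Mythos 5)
Your argument is correct, and for the first identity it is essentially the paper's proof: the adjunction $\Hom_\sfT(\gam_\fp X,Y)\cong\Hom_\sfT(X,\lam^\fp Y)$ translates $\cosupp_R(Y)\subseteq\mcU'$ into $\Hom_\sfT(\gam_\fp X,Y)=0$ for all $\fp\in\mcU$, and the local-global principle identifies $\sfT_\mcU$ with $\tloc(\{\gam_\fp X\mid X\in\sfT,\ \fp\in\mcU\})$, which gives $(\sfT_\mcU)^\perp=\sfT^{\mcU'}$. Where you genuinely diverge is the second identity. The paper observes that $\sfT_\mcU=\bigcap_{\fp\in\mcU'}\Ker\gam_\fp$ is the kernel of a coproduct-preserving cohomological functor, invokes Proposition~\ref{prop:loc} to realize it as the kernel of a localization functor, and concludes $\sfT_\mcU={^\perp((\sfT_\mcU)^\perp)}$ formally. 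You instead prove the nontrivial inclusion ${^\perp(\sfT^{\mcU'})}\subseteq\sfT_\mcU$ by hand, testing against the objects $\lam^\fp Z$ for $\fp\in\mcU'$ and using $\cosupp_R(\lam^\fp Z)\subseteq\{\fp\}$ to see that they lie in $\sfT^{\mcU'}$, so that adjunction kills $\gam_\fp X$. This works, but it requires the input $\cosupp_R(\lam^\fp Z)\subseteq\{\fp\}$ from \cite{\bik:2010a} (the cosupport analogue of $\supp_R(\gam_\fp X)\subseteq\{\fp\}$), which you should cite explicitly; in exchange you avoid the localization-functor machinery of Proposition~\ref{prop:loc} entirely, so your route is more elementary and self-contained at the level of the $(\gam_\fp,\lam^\fp)$ adjunction, while the paper's is shorter given the infrastructure already in place. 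One small point of hygiene: when you say $\{Z\mid\Hom_\sfT(Z,Y)=0\}$ is localizing, you should take Hom in all degrees (i.e., against all $\Sigma^nY$) for it to be a triangulated subcategory; this costs nothing since $\lam^\fp$ commutes with suspension.
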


\begin{proof} 
Fix an object $Y$ in $\sfT$. By definition, $\cosupp_R(Y)\subseteq \mcU'$ if and only if
\[ 
\Hom_\sfT(\gam_\fp-,Y)\cong\Hom_\sfT(-,\lam^\fp Y)=0
\] 
for all $\fp\in\mcU$. The local-global principle, \cite[Theorem~3.6]{\bik:2008a} implies
\[ 
\sfT_\mcU=\tloc(\{\gam_\fp X\mid X\in\sfT,\,\fp\in\mcU\})\,.
\] 
Thus $(\sfT_\mcU)^\perp=\sfT^{\mcU '}$.

On the other hand, $\sfT_\mcU=\bigcap_{\fp\in\mcU'}\Ker \gam_\fp$. It
follows from Proposition~\ref{prop:loc} that $\sfT_\mcU$ is the kernel
of a localization functor. Thus
$\sfT_\mcU={^\perp((\sfT_\mcU)^\perp)}={^\perp(\sfT^{\mcU '})}$.
\end{proof}

\begin{proposition} 
\label{prop:tp=ac}
The tensor product formula holds in $\sfT$ if and only if
\[ 
\acy(X)^\perp =
\{Y\in\sfT\mid\cosupp_R(Y)\subseteq\supp_R(X)\}\qquad\text{for all }X\in\sfT\,.
\]
\end{proposition}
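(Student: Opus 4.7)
The plan is to translate the target identity into a statement about the subcategories $\sfT_\mcU$ and $\sfT^\mcU$ introduced in Lemma~\ref{le:perp}, and then compare with condition (3) of Theorem~\ref{thm:comparison_bclass}, which is known to be equivalent to the tensor product formula. Set $\mcU = \supp_R(X)$ and $\mcU' = \Spec R \smallsetminus \mcU$. Then by definition the set on the right-hand side of the displayed equation is exactly $\sfT^\mcU$, while condition (3) of Theorem~\ref{thm:comparison_bclass} is the identity $\acy(X) = \sfT_{\mcU'}$. Lemma~\ref{le:perp} provides the two reciprocal formulas $(\sfT_{\mcU'})^\perp = \sfT^\mcU$ and ${^\perp(\sfT^\mcU)} = \sfT_{\mcU'}$, so passing between the two identities is essentially a matter of taking left or right orthogonals.

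For the forward direction, assume the tensor product formula holds. Then Theorem~\ref{thm:comparison_bclass}(3) gives $\acy(X) = \sfT_{\mcU'}$. Taking $(-)^\perp$ and invoking the first half of Lemma~\ref{le:perp} yields $\acy(X)^\perp = \sfT^\mcU$, which is the desired equality.

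For the converse, the key observation is that $\acy(X) = \Ker(X\otimes -)$ is the kernel of an exact, coproduct-preserving functor, so by Proposition~\ref{prop:loc} it is the kernel of a localization functor $L\colon\sfT\to\sfT$. As noted in Section~\ref{sec:Bousfield classes}, this implies $\acy(X) = {^\perp(\acy(X)^\perp)}$. Applying ${^\perp(-)}$ to the assumed equality $\acy(X)^\perp = \sfT^\mcU$ and invoking the second half of Lemma~\ref{le:perp} gives $\acy(X) = \sfT_{\mcU'}$, which is precisely condition (3) of Theorem~\ref{thm:comparison_bclass}, and hence the tensor product formula.

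The only delicate point is the biorthogonal identity $\acy(X) = {^\perp(\acy(X)^\perp)}$ used in the converse, which requires more than just that $\acy(X)$ is a localizing subcategory — it needs the fact that this subcategory is the kernel of a localization functor, supplied by Proposition~\ref{prop:loc}. Once that is in hand, the proof is a short exercise in orthogonal calculus with Lemma~\ref{le:perp}.
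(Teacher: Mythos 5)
Your proof is correct and follows the same route as the paper, which simply cites the description of $\acy(X)$ from Theorem~\ref{thm:comparison_bclass}(3) together with Lemma~\ref{le:perp}. You have usefully spelled out the one point the paper leaves implicit, namely that the converse needs $\acy(X)={^\perp(\acy(X)^\perp)}$, which holds because $\acy(X)=\Ker(X\otimes-)$ is the kernel of a localization functor by Proposition~\ref{prop:loc}.
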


\begin{proof} 
  Given the description of the Bousfield class $\acy(X)$ in
  Theorem~\ref{thm:comparison_bclass}, this is an immediate
  consequence of Lemma~\ref{le:perp}.
\end{proof}

One consequence is that the Dichotomy Conjecture of Hovey and Palmieri
\cite[Conjecture~7.5]{Hovey/Palmieri:1999a} holds in our setting. The
conjecture asserts that for each object $X$ in $\sfT$, there exists a
non-zero compact object that is either $X$-acyclic or $X$-local.

\begin{example}
  Suppose the tensor product formula holds in $\sfT$.

  Let $\fp\in\supp_R(\sfT)$ be maximal and denote by $\kos{\one}{\fp}$
  the corresponding Koszul object \cite[\S5]{\bik:2008a}. By its
  construction, the object is compact, and
\[
\supp_R( \kos{\one}{\fp})=\{\fp\}=\cosupp_R(\kos{\one}{\fp}),
\]
by \cite[Lemma~2.6]{\bik:2009a} and \cite[Lemma~4.12]{\bik:2010a}.
Given any object $X$ in $\sfT$, there are two possible cases. If $\fp$
is in $\supp_{R}(X)$, then the object $\kos{\one}{\fp}$ is $X$-local, by
Proposition~\ref{prop:tp=ac}. Otherwise, $\kos{\one}{\fp}$ is
$X$-acyclic, by Theorem~\ref{thm:comparison_bclass}.
\end{example}

The following example is intended to show that not ever object is local with respect to itself; confer Proposition~\ref{prop:compact-local}.

\begin{example}
  \label{ex:ca2} 
Let $A$ be a commutative noetherian ring. Fix a prime ideal $\fp$ and let $E$ be the injective hull of $A/\fp$. The
derived category of $A$ is then a noetherian $R$-linear category, where $R=A$, and there are equalities
\[ 
\cosupp_R(E) = \{\fq\in\Spec R\mid\fq\subseteq\fp\}\quad\text{and}\quad \supp_R(E) = \{\fp\}.
\] 
For the computation of $\cosupp_R(E)$, see \cite[Proposition~5.4]{\bik:2010a}. Thus, $E$ is not $E$-local.
Recall that the derived category is stratified by $A$.
\end{example}

The preceding discussion raises the question: When is $\cosupp_R(X)\subseteq \supp_R(X)$? 

\begin{proposition}
\label{prop:cosupp-supp}
In any $R$-linear tensor triangulated category $\sfT$, each compact
object $X$ satisfies $\cosupp_{R}(X)\subseteq \supp_{R}(X)$.
\end{proposition}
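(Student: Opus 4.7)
The plan is to verify the contrapositive: if $\fp\notin\supp_R(X)$, then $\fp\notin\cosupp_R(X)$. The argument will rest on two ingredients. First, since $X$ is compact it is strongly dualizable (built into the definition of a compactly generated tensor triangulated category in Section~\ref{se:stratification}), so $X^\vee$ is again compact and $X\cong(X^\vee)^\vee=\fHom(X^\vee,\one)$. Second, Proposition~\ref{prop:compact-local} yields the Bousfield-class equality $\acy(X)=\acy(X^\vee)$, which will be the bridge between support and cosupport.

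Given these inputs, the proof should proceed as follows. Assume $\fp\notin\supp_R(X)$, i.e., $\gam_\fp X\cong\gam_\fp\one\otimes X=0$. Then $\gam_\fp\one$ lies in $\acy(X)$, hence in $\acy(X^\vee)$ by Proposition~\ref{prop:compact-local}; that is, $\gam_\fp\one\otimes X^\vee=0$ as well. The standard tensor-hom adjunction $\fHom(A\otimes B,C)\cong\fHom(A,\fHom(B,C))$, available in any closed symmetric monoidal category, applied with $(A,B,C)=(\gam_\fp\one,X^\vee,\one)$ then gives
\[
\lam^\fp X=\fHom(\gam_\fp\one,X)\cong\fHom(\gam_\fp\one,\fHom(X^\vee,\one))\cong\fHom(\gam_\fp\one\otimes X^\vee,\one)=0,
\]
so $\fp\notin\cosupp_R(X)$, as required. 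I do not anticipate a serious obstacle: compactness enters only through strong dualizability, which is precisely what converts the first slot of $\fHom(\gam_\fp\one,-)$ into a tensor factor, at which point the vanishing deduced from $\acy(X)=\acy(X^\vee)$ becomes directly usable.
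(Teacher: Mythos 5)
Your argument is correct. It rests on the same key input as the paper's proof---Proposition~\ref{prop:compact-local}, which in turn encodes strong dualizability of $X$---but the execution is genuinely different. The paper uses the ``$X$ is $X$-local'' half of that proposition: it fixes an arbitrary compact $C$, observes that $\gam_\fp C$ is $X$-acyclic, deduces $\Hom_\sfT(\gam_\fp C,X)=0$ from $X$-locality, converts this by adjunction into $\Hom_\sfT(C,\lam^\fp X)=0$, and finally invokes the fact that compact objects detect vanishing to conclude $\lam^\fp X=0$. You instead use the ``$\acy(X)=\acy(X^\vee)$'' half and carry out the whole computation internally: from $\gam_\fp\one\otimes X^\vee=0$ and the identifications $X\cong\fHom(X^\vee,\one)$ and $\fHom(A\otimes B,C)\cong\fHom(A,\fHom(B,C))$ you get $\lam^\fp X\cong\fHom(\gam_\fp\one\otimes X^\vee,\one)=0$ in one line. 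This buys you a slightly cleaner argument that never needs to test against a generating set of compact objects (the generation property of $\sfT^c$ is used silently in the paper's ``since $C$ was arbitrary''); the trade-off is that you must justify the internal tensor--hom adjunction, which the paper only states in its external form $\Hom_\sfT(X\otimes Z,Y)\cong\Hom_\sfT(Z,\fHom(X,Y))$ --- the internal version does follow from this by a routine Yoneda argument, so there is no gap, but it is worth a word. In effect your proof internalizes and compresses the two steps that the paper keeps separate (proving $X$-locality externally, then applying it).
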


\begin{proof}
Suppose $\fp$ is not in $\supp_{R}(X)$, and fix a compact object
$C$. Then one has
\[
X\otimes \gam_{\fp} C\cong X\otimes \gam_{\fp}\one \otimes C \cong \gam_{\fp}X\otimes C =0\,.
\]
Therefore $\gam_{\fp} C$ is $X$-acyclic. Using that $X$ is $X$-local,
by Proposition~\ref{prop:compact-local}, this yields
\[
\Hom_{\sfT}(C,\lam^{\fp}X)\cong \Hom_{\sfT}(\gam_{\fp}C,X)=0\,.
\]
Thus $\lam^{\fp}X=0$, since $C$ was arbitrary. It follows that $\fp$ is not in $\cosupp_{R}(X)$.
\end{proof}

The next example shows that the inclusion in the preceding result can be strict.

\begin{example}
\label{ex:kg} Let $G$ be a finite group, $k$ a field whose characteristic divides the order of $G$, and set $R=H^{*}(G,k)$, the cohomology algebra of $G$.

Then $\KInj{kG}$, the homotopy category of complexes of injective $kG$ modules is a compactly generated triangulated category, admitting a natural $R$-action. As explained in \cite[Example~11.1]{\bik:2010a}, any non-zero compact object $X$ in $\KInj{kG}$ satisfies
\[ 
\cosupp_R(X) =\{H^{\ges1}(G,k)\}\,.
\] 
On the other hand, for any any closed subset $\mcV\subseteq \Spec R$, there exists a compact object $X$ with $\supp_R(X)=\mcV$, so the inclusion in Theorem~\ref{thm:cosupp-supp} can be strict.

In contrast, in $\StMod{kG}$, which is also an $R$-linear compactly
generated triangulated category, for any compact object (that is to
say, for any finite dimensional $kG$-module) $X$ one has $\cosupp_R(X)
= \supp_R(X)$; see \cite[Example~11.14]{\bik:2010a}.
\end{example}

\subsection*{A variation}
To round off this discussion we prove a version of
Proposition~\ref{prop:cosupp-supp} for triangulated categories without
using any tensor structure. Note that the definitions of support and
cosupport in terms of the functors $\gam_\fp$ and $\lam^\fp$ do not
require a tensor structure; see \cite[\S5]{\bik:2008a} and
\cite[\S4]{\bik:2010a} for details.

\begin{theorem}
\label{thm:cosupp-supp} 
Let $\sfT$ be a compactly generated $R$-linear triangulated category and $X$  a compact object such that $\End_\sfT^*(X)$ is finitely generated over $R$. Then
\[ 
\cosupp_R(X)\subseteq \supp_R(X)\,.
\]
\end{theorem}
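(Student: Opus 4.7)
The strategy is: given $\fp\notin\supp_R(X)$, produce an element $r\in R\setminus\fp$ which acts as zero on $\lam^\fp X$; since by construction $R\setminus\fp$ acts invertibly on $\lam^\fp X$, this forces $\lam^\fp X=0$. The hypothesis that $\End^*_\sfT(X)$ is finitely generated over $R$ is exactly what makes the production of $r$ possible.

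First I establish the inclusion $\supp_R\End^*_\sfT(X)\subseteq\supp_R(X)$. Because $X$ is compact, the functor $\Hom^*_\sfT(X,-)$ preserves coproducts and exact triangles; combined with the fact that $\gam_\fp$ on $\sfT$ is constructed from the $R$-action via Koszul objects and homotopy (co)limits (see \cite[\S5]{\bik:2008a}), this gives a natural isomorphism
\[
\Hom^*_\sfT(X,\gam_\fp X)\;\cong\;\gam_\fp\End^*_\sfT(X),
\]
where on the right $\gam_\fp$ is the derived local cohomology functor on graded $R$-modules. If $\fp\notin\supp_R(X)$, the left side vanishes, so $\gam_\fp M=0$ for $M=\End^*_\sfT(X)$. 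Since $M$ is a finitely generated graded module over the noetherian ring $R$, standard commutative algebra gives $M_\fp=0$; equivalently, $\ann_R M\not\subseteq\fp$.

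Pick $r\in\ann_R\End^*_\sfT(X)\setminus\fp$. The condition $r\cdot\id_X=0$ in $\End^*_\sfT(X)$ says that the component at $X$ of the natural transformation $r\colon\id_\sfT\to\Si^{|r|}$ vanishes. As $\lam^\fp$ is $R$-linear by its construction in \cite[\S4]{\bik:2010a}, we obtain
\[
r\cdot\id_{\lam^\fp X}\;=\;\lam^\fp(r\cdot\id_X)\;=\;0.
\]
On the other hand, the functor $\lam^\fp$ factors through inverting $R\setminus\fp$, so every element of $R\setminus\fp$ acts invertibly on $\lam^\fp X$. Thus $r\cdot\id_{\lam^\fp X}\colon\lam^\fp X\to\Si^{|r|}\lam^\fp X$ is simultaneously zero and an isomorphism, forcing $\lam^\fp X=0$. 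Hence $\fp\notin\cosupp_R(X)$, and we conclude $\cosupp_R(X)\subseteq\supp_R(X)$.

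The main technical point lies in verifying the isomorphism $\Hom^*_\sfT(X,\gam_\fp X)\cong\gam_\fp\End^*_\sfT(X)$ rigorously in the non-tensor setting, since the tensor-based argument used in Proposition~\ref{prop:cosupp-supp} is unavailable; it must be traced through BIK's Koszul-object construction of $\gam_\fp$, where compactness of $X$ ensures that $\Hom^*_\sfT(X,-)$ commutes with the relevant homotopy (co)limits. The remaining steps are either formal ($R$-linearity of $\lam^\fp$, invertibility of $R\setminus\fp$ on its essential image) or elementary commutative algebra for finitely generated modules.
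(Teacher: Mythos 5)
Your proof is correct and follows essentially the same route as the paper's: both arguments reduce to showing $\End^*_\sfT(X)_\fp=0$ when $\fp\notin\supp_R(X)$ --- the consequence you actually use of your claimed isomorphism $\Hom^*_\sfT(X,\gam_\fp X)\cong\gam_\fp\End^*_\sfT(X)$ (which, stated as a literal module isomorphism with derived local cohomology, is more than is true or needed) is precisely the support identification of \cite[Theorem~5.5]{\bik:2008a}, which the paper simply cites --- and then exploit that $R\setminus\fp$ acts invertibly on $\lam^\fp X$, equivalently that $\Hom^*_\sfT(\gam_\fp C,X)$ is $\fp$-local for compact $C$, which is \cite[Proposition~2.3]{\bik:2009a}. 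Your endgame via an element $r\in\ann_R\End^*_\sfT(X)\setminus\fp$ acting both as zero and invertibly on $\lam^\fp X$ is a repackaging of the paper's computation $\Hom^*_\sfT(\gam_\fp C,X)\cong\Hom^*_\sfT(\gam_\fp C,X)_\fp= 0$.
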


\begin{proof} 
Fix a prime $\fp\not\in\supp_R(X)$ and a compact object $C$ in $\sfT$. It suffices to prove that $\Hom^{*}_{\sfT}(\gam_{\fp}C,X)=0$. Then adjunction yields $\Hom^{*}_{\sfT}(C,\lam^\fp X)=0$, and hence, since $C$ was arbitrary, $\lam^{\fp}X=0$, that is to say, $\fp\not\in\cosupp_R(X)$.

Since $X$ is compact and the $R$-module $\End^*_\sfT(X)$ is finitely generated, \cite[Theorem~5.5]{\bik:2008a}
yields that $\End^*_\sfT(X)_\fp=0$. Since the $R$-action on $\Hom^{*}_{\sfT}(\gam_{\fp}C, X)$ factors through $\End^*_\sfT(X)$, this then yields the second isomorphism below:
\[
\Hom^{*}_{\sfT}(\gam_{\fp}C, X) \cong \Hom^{*}_{\sfT}(\gam_{\fp}C, X)_{\fp}\cong 0\,.
\]
The first one holds as the $R$-module on its left is $\fp$-local, by \cite[Proposition~2.3]{\bik:2009a}.
\end{proof}

When the $R$-linear category $\sfT$ is noetherian, in the sense of \cite{Benson/Iyengar/Krause:2009a}, 
 the result above may be reformulated as follows; confer also Lemma~\ref{le:perp}.

\begin{corollary} 
  Assume that for any compact object $C$ in $\sfT$ the $R$-module  $\End^{*}_{\sfT}(C)$ is finitely generated. Let $\mcU\subseteq\Spec R$ be a subset such that $\Spec R\setminus\mcU$ is specialization closed. Then there is an equality $\sfT_{\mcU}=\sfT^{\mcU}$, the subcategory 
$\sfS=\sfT_{\mcU}$ is  localizing and colocalizing, and satisfies
\[ 
\sfT^c\cap{^\perp\sfS}\subseteq\sfS^\perp\,.
\]
\end{corollary}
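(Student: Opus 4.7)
My plan is to introduce $\mcV=\Spec R\setminus\mcU$, which is specialization closed by hypothesis, and to invoke the localization machinery of \cite{\bik:2008a} attached to $\mcV$: there is a natural exact triangle of functors $\gam_\mcV\to\Id_\sfT\to L_\mcV$ on $\sfT$, where $\sfT_\mcV=\Ker L_\mcV$ is localizing, $L_\mcV$ is a smashing localization, and $\Im L_\mcV=(\sfT_\mcV)^\perp$. The strategy is to realize both candidate descriptions of $\sfS$ as $\Im L_\mcV$, and then leverage the splitting of the localization triangle for compact objects that are left-orthogonal to $\sfS$.

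For the equality $\sfT_\mcU=\sfT^\mcU$, I would argue as follows. Lemma~\ref{le:perp} applied to $\mcV$ gives $\Im L_\mcV=(\sfT_\mcV)^\perp=\sfT^\mcU$. To identify $\Im L_\mcV$ with $\sfT_\mcU$: the inclusion $\subseteq$ is the standard fact that $\supp_R(L_\mcV X)\subseteq\mcU$ for specialization-closed $\mcV$; the reverse inclusion follows because, combining $\supp_R(\gam_\mcV X)\subseteq\mcV$ and $\supp_R(L_\mcV X)\subseteq\mcU$ with the localization triangle, one obtains $\supp_R(\gam_\mcV X)\subseteq\mcV\cap\supp_R(X)$, which is empty when $\supp_R(X)\subseteq\mcU$, so $\gam_\mcV X=0$ and $X\cong L_\mcV X$. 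Setting $\sfS:=\sfT_\mcU=\sfT^\mcU=\Im L_\mcV$, the localizing property is immediate (image of a smashing localization), and the colocalizing property follows from the description $\sfT^\mcU=\bigcap_{\fp\in\mcV}\Ker\lam^\fp$ as an intersection of product-closed subcategories, each $\lam^\fp$ being a right adjoint.

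For the last inclusion $\sfT^c\cap{^\perp\sfS}\subseteq\sfS^\perp$, the plan is to take compact $X\in{^\perp\sfS}$, observe that $L_\mcV X\in\sfS$ forces the localization map $X\to L_\mcV X$ to vanish, so the localization triangle splits and exhibits $X$ as a direct summand of $\gam_\mcV X$; consequently $X\in\sfT_\mcV$, i.e., $\supp_R(X)\subseteq\mcV$. This is where the noetherian hypothesis on endomorphism rings of compact objects earns its keep: Theorem~\ref{thm:cosupp-supp} upgrades this to $\cosupp_R(X)\subseteq\supp_R(X)\subseteq\mcV$, putting $X\in\sfT^\mcV$, which is $(\sfT_\mcU)^\perp=\sfS^\perp$ by a final application of Lemma~\ref{le:perp}.

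The most delicate step is likely to be the double identification $\sfT_\mcU=\Im L_\mcV=\sfT^\mcU$, since it entangles the general orthogonality Lemma~\ref{le:perp} with the support-control properties specific to specialization-closed $\mcV$, and it is precisely this coincidence that would fail for a general $\mcU$. Once it is in hand, the remaining structural assertions and the splitting argument for compact left-orthogonals are clean consequences of the noetherian hypothesis via Theorem~\ref{thm:cosupp-supp}.
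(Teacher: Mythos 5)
Your proposal is correct and follows the same overall skeleton as the paper's proof: establish $\sfT_\mcU=\sfT^\mcU$, identify the two orthogonals of $\sfS$ via Lemma~\ref{le:perp}, and close with Theorem~\ref{thm:cosupp-supp}. The one genuine difference is that the paper simply cites \cite[Corollary~4.9]{\bik:2010a} for the equality $\sfT_\mcU=\sfT^\mcU$, whereas you reprove it from the localization triangle $\gam_\mcV X\to X\to L_\mcV X\to$ attached to the specialization closed set $\mcV=\Spec R\setminus\mcU$, using $\Im L_\mcV=(\sfT_\mcV)^\perp=\sfT^\mcU$ on one side and the support formulas $\supp_R(\gam_\mcV X)\subseteq\mcV\cap\supp_R(X)$ together with the detection property on the other; this makes the argument self-contained at the cost of importing those support computations. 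In the final step your splitting argument (the map $X\to L_\mcV X$ vanishes, so compact $X\in{^\perp\sfS}$ is a summand of $\gam_\mcV X$ and hence lies in $\sfT_\mcV$) is valid but is a small detour: Lemma~\ref{le:perp}, applied to $\mcV$, already gives ${^\perp\sfS}={^\perp(\sfT^\mcU)}=\sfT_\mcV$ for arbitrary objects, after which the inclusion $\supp_R(X)\subseteq\mcV$ and the passage to $\cosupp_R(X)\subseteq\mcV$, i.e.\ $X\in\sfT^\mcV=\sfS^\perp$, proceed exactly as you say.
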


\begin{proof} 
The equality $\sfT_{\mcU}=\sfT^{\mcU}$ is \cite[Corollary~4.9]{\bik:2010a}; from this it follows that $\sfS$ is localizing and also colocalizing.

Setting $\mcV=\Spec R\setminus\mcU$, one gets $^\perp\sfS=\sfT_\mcV$ and $\sfS^\perp=\sfT^\mcV$, by Lemma~\ref{le:perp}. Now the desired inclusion follows from Theorem~\ref{thm:cosupp-supp}.
\end{proof}

\section{Stone duality and support}
\label{se:Stone}
A basic idea in lattice theory, going back to Stone
\cite{Stone:1937a}, is to \emph{represent} a distributive lattice via
its spectrum of prime ideals; see \cite{Borceux:1994a} for a modern
treatment. In this section we consider a well generated tensor
triangulated category $\sfT$ and identify a distributive sublattice of
the Bousfield lattice of $\sfT$. This enables us to introduce a notion
of support for objects in $\sfT$, with values in the associated
topological space; it is an intrinsic notion of
support that does not depend on an action of any ring.

\subsection*{Frames} 
A \emph{frame} is a complete lattice in which the following
\emph{infinite distributivity} holds: for every element $a$ and set of
elements $\{b_i\}$ in $\Lambda$, there is an equality
\[
a\wedge(\bigvee_ib_i)=\bigvee_i(a\wedge b_i)\,.
\] 
An element $p\neq 1$ in a frame $\Lambda$ is called \emph{prime} if
$a\wedge b\le p$ implies $a\le p$ or $b\le p$. We write $\Sp
(\Lambda)$ for the prime elements in $\Lambda$, and for each
$a\in\Lambda$ set
\[
U(a)=\{p\in\Sp(\Lambda)\mid a\not\le p\}\,.
\] 
It is not hard to verify that declaring sets of the form $U(a)$ to be
open defines a topology on $\Sp(\Lambda)$; this is the \emph{Stone
  topology}, and $\Sp(\Lambda)$ with this topology is called the
\emph{spectrum} of $\Lambda$. One says that $\Lambda$ has \emph{enough
  points} if $U(a)=U(b)$ implies $a=b$, for all $a,b\in\Lambda$. This
means that the map sending $a\in \Lambda$ to $U(a)$ induces an
isomorphism between $\Lambda$ and the lattice of open subsets of
$\Sp(\Lambda)$.

A morphism of frames $f\col\Lambda\to\Gamma$ is a map such that for
elements $a,b$ and any set of elements $\{a_i\}$ in $\Lambda$, there are
equalities
\[
f(a\wedge b)=f(a)\wedge f(b)\quad\text{and}\quad f(\bigvee_ia_i)=\bigvee_i f(a_i)\,.
\]
For example, primes in $\Lambda$ correspond to \emph{points}, that is, morphisms $\Lambda\to \mathbb F_2= \{0,1\}$. A morphism of frames $f\col\Lambda\to\Gamma$ induces a continuous map $\Sp(f)\col\Sp(\Gamma)\to\Sp(\Lambda)$ by taking a prime $p$ to $\bigvee_{f(a)\le p}a$. Alternatively, $\Sp(f)$ takes a point
$q\col\Gamma\to\mathbb F_2$ to the composite $qf$. Note that
\begin{equation}
\label{eq:open}
U(f(a))=\Sp(f)^{-1}(U(a))\,.
\end{equation}
This yields a contravariant functor $\Sp$ into the category of topological spaces. Taking a space $X$ to the lattice $\mathcal O(X)$ of open subsets provides a right adjoint to $\Sp$. Thus there is a bijection
\begin{equation}
\label{eq:duality}
\Hom_{\mathsf{Top}}(X,\Sp(\Lambda))\xra{\sim}  \Hom_{\mathsf{Frm}}(\Lambda,\mathcal O(X))
\end{equation}
which sends $f$ to the map $(a\mapsto f^{-1}U(a))$.  For a frame $\Lambda$, the adjunction morphism 
\begin{equation}
\label{eq:adjoint}
\Lambda\lto \mathcal O (\Sp(\Lambda)),\quad a\mapsto U(a)\,,
\end{equation} 
is an isomorphism if and only if $\Lambda$ has enough points. Given a
space $X$, the adjunction morphism $X\to\Sp(\mathcal O(X))$ is a
homeomorphism if and only if $X$ is \emph{sober}, that is, each
non-empty irreducible closed subset has a unique generic point.

The following result summarizes the correspondence between lattices
and spaces; it  goes back to Stone \cite{Stone:1937a} and is known as
\emph{Stone duality}.

\begin{proposition}
\label{pr:frames}
  \pushQED{\qed} Taking a frame to its spectrum and a topological
  space to its lattice of open subsets induces mutually inverse
  (contravariant) equivalences:
\begin{equation*}
\left\{
\begin{gathered}
  \text{frames with enough points}
\end{gathered}\;
\right\}
\xymatrix@C=3pc{ \ar@{<->}[r]^-{\scriptscriptstyle{1-1}} &} \left\{
\begin{gathered}
  \text{sober topological spaces}
\end{gathered}\;
\right\}\qedhere
\end{equation*}
\end{proposition}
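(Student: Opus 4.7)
The plan is to interpret the bijection \eqref{eq:duality} as a (contravariant) adjunction between $\Sp$ and $\mathcal O$, with counit $\varepsilon_\Lambda \col \Lambda \to \mathcal O(\Sp(\Lambda))$ given by the morphism \eqref{eq:adjoint} and unit $\eta_X \col X \to \Sp(\mathcal O(X))$ sending a point $x$ to the frame morphism $\mathcal O(X) \to \mathbb F_2$ which takes the value $0$ precisely on opens containing $x$. A general fact about adjunctions then reduces the proposition to two tasks: identify the full subcategories on which $\varepsilon$ and $\eta$ are invertible, and check that each functor sends one of the chosen subcategories to the other.

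First I would analyze $\varepsilon_\Lambda$. By construction its image is the collection of all sets of the form $U(a) \subseteq \Sp(\Lambda)$, which is the topology of $\Sp(\Lambda)$ by definition; hence $\varepsilon_\Lambda$ is always surjective, and its injectivity is exactly the enough-points condition. So $\varepsilon_\Lambda$ is an isomorphism of frames if and only if $\Lambda$ has enough points. Next I would treat $\eta_X$. Using \eqref{eq:open} one sees $\eta_X^{-1}(U(a)) = a$, which gives continuity, and also that $\eta_X$ is a homeomorphism onto its image; the question thus reduces to bijectivity of $\eta_X$. Points of $\mathcal O(X)$ correspond to prime elements $p \in \mathcal O(X)$, and passing to complements $p \leftrightarrow X \setminus U(p)$ identifies them with irreducible closed subsets of $X$; under this identification $\eta_X$ becomes the map $x \mapsto \overline{\{x\}}$, so $\eta_X$ is a homeomorphism exactly when $X$ is sober.

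It remains to check that $\mathcal O(X)$ has enough points whenever $X$ is sober, and that $\Sp(\Lambda)$ is sober for every frame $\Lambda$. The first is immediate: if $X$ is sober then $\eta_X$ is already bijective, so distinct opens $U \neq V$ of $X$ are separated by some $x \in X$, and the corresponding frame point then separates $U$ from $V$ in $\Sp(\mathcal O(X))$. For the second, I would describe closed subsets of $\Sp(\Lambda)$ as $V(a) = \{p \in \Sp(\Lambda) \mid a \le p\}$, note that $\overline{\{p\}} = V(p)$, and argue that any non-empty irreducible closed subset has the form $V(a)$ with $a$ itself prime. Infinite distributivity and $b \wedge c \le a$ give
\[
V(a\vee b) \cup V(a\vee c) = V((a\vee b)\wedge(a\vee c)) = V(a \vee (b\wedge c)) = V(a),
\]
so irreducibility forces, say, $V(a\vee b) = V(a)$, whence $b \le a$.

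The main obstacle I expect is this last step, namely passing from $V(a \vee b) = V(a)$ to $a \vee b = a$: this requires that prime elements of $\Lambda$ separate the order, which is the content of the standard Zorn-type argument producing, for each $b \not\le a$, a prime element above $a$ that does not lie above $b$. Once that frame-theoretic fact is in hand, the verification of sobriety of $\Sp(\Lambda)$ is complete, and the proposition follows by restricting the adjunction to the two subcategories just identified.
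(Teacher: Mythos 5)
Your overall strategy---restrict the adjunction \eqref{eq:duality} to the full subcategories on which the counit \eqref{eq:adjoint} and the unit $X\to\Sp(\mathcal O(X))$ are invertible, and identify these as the frames with enough points and the sober spaces---is exactly what the paper's surrounding discussion sets up (the proposition itself carries no further proof). Your analyses of $\varepsilon_\Lambda$ and $\eta_X$ are correct, and so is the claim that $\mathcal O(X)$ has enough points for $X$ sober; in fact sobriety is not needed there, since any two distinct opens are separated by a point $x$ lying in one and not the other, and the prime $X\setminus\overline{\{x\}}$ then separates them in $\Sp(\mathcal O(X))$.

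The genuine flaw sits precisely at the step you flagged. The ``standard Zorn-type argument'' you invoke---for each $b\not\le a$ in an arbitrary frame, a prime $p$ with $a\le p$ and $b\not\le p$---does not exist: that separation property is literally the statement that $\Lambda$ has enough points, and it fails for general frames (an atomless complete Boolean algebra is a frame with no prime elements whatsoever, yet its empty spectrum is vacuously sober). The Zorn argument the paper runs in the proof of Proposition~\ref{pr:compact} succeeds only because it first produces a \emph{compact} element $c\le b$ with $c\not\le a$; compactness is what makes a maximal element of $\{x\mid a\le x,\ c\not\le x\}$ prime. Two repairs are available. Either observe that for the proposition you only need $\Sp(\Lambda)$ to be sober when $\Lambda$ has enough points, so the separation property is simply the standing hypothesis; or, better, prove sobriety for every frame by choosing the representative $a=\bigwedge_{q\in C}q$ for the given nonempty irreducible closed set $C$ (one checks $C=V(a)$ for this $a$). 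With that choice your own computation closes the gap without any separation: $V(a\vee b)=V(a)$ says $b\le q$ for every $q\in C$, whence $b\le a$ by definition of the infimum, and $a$ is prime. This is the route the paper itself takes in Proposition~\ref{pr:compact}.
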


\subsection*{Bousfield idempotents}
Let now $\sfT$ be a well generated tensor triangulated category. An
object $X$ in $\sfT$ is said to be \emph{Bousfield idempotent} if
$\acy(X)=\acy(X\otimes X)$. This property only depends on $\acy(X)$,
since $\acy(X)=\acy(Y)$ implies
\[
\acy(X\otimes X)=\acy(X\otimes Y) =\acy(Y\otimes Y)\,.
\]  
Consider the set
\[
\Dl(\sfT)=\{\acy(X)\in\Acy(\sfT)\mid \acy(X)=\acy(X\otimes X)\}
\] 
with the partial order induced from $\Acy(\sfT)$.  The following result is due to Bousfield \cite{Bousfield:1979a}; see also \cite{Hovey/Palmieri:1999a}. It shows that distributivity of the tensor product implies distributivity of $\Dl(\sfT)$; hence our notation.

\begin{proposition}
\label{pr:D(T)}
The partially ordered set $\Dl(\sfT)$ is a frame. More precisely,
\[
\acy(X)\wedge\acy(Y)=\acy(X\otimes Y)\quad\text{and} \quad \bigvee_i\acy(X_i)=\acy(\coprod_iX_i)
\] 
hold in $\Dl(\sfT)$ for all Bousfield idempotent objects $X,Y$ and $\{X_i\}$ in $\sfT$.
\end{proposition}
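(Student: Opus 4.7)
The plan is to verify, in turn, that $\Dl(\sfT)$ is closed under the two operations $\acy(X\otimes Y)$ and $\acy(\coprod_i X_i)$ inherited from $\Acy(\sfT)$, that these operations really are the meet and join when restricted to $\Dl(\sfT)$, and that infinite distributivity follows formally. The recurring tool is the elementary observation that $\acy(U)=\acy(U')$ implies $\acy(U\otimes Z)=\acy(U'\otimes Z)$ for every $Z$, which is immediate from the tautology $W\in\acy(U\otimes Z)\iff Z\otimes W\in\acy(U)$.

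For closure under tensor, I would apply this observation twice to a pair of Bousfield idempotents $X,Y$:
\[
\acy((X\otimes Y)\otimes(X\otimes Y))=\acy((X\otimes X)\otimes(Y\otimes Y))=\acy(X\otimes(Y\otimes Y))=\acy(X\otimes Y),
\]
so $X\otimes Y$ is again Bousfield idempotent. For closure under coproducts, expand $(\coprod_iX_i)\otimes(\coprod_iX_i)\cong\coprod_{i,j}X_i\otimes X_j$: an object $W$ lies in $\acy(\coprod_iX_i\otimes\coprod_iX_i)$ iff $X_i\otimes X_j\otimes W=0$ for all $i,j$. Specializing to $i=j$ gives $W\in\acy(X_i\otimes X_i)=\acy(X_i)$ by idempotence, whence $X_i\otimes W=0$ for all $i$; the converse inclusion is trivial. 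Hence $\coprod_iX_i$ is Bousfield idempotent, and the join formula for $\Dl(\sfT)$ is simply inherited from the ambient lattice $\Acy(\sfT)$, where $\bigvee_i\acy(X_i)=\acy(\coprod_iX_i)$ already holds.

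For the meet formula, the inclusions $\acy(X\otimes Y)\supseteq\acy(X)$ and $\acy(X\otimes Y)\supseteq\acy(Y)$ are immediate from the definitions and show that $\acy(X\otimes Y)$ is a lower bound for $\{\acy(X),\acy(Y)\}$ in $\Dl(\sfT)$. To see it is the greatest, suppose $\acy(Z)\in\Dl(\sfT)$ is another lower bound, so $\acy(X)\cup\acy(Y)\subseteq\acy(Z)$, and let $W\in\acy(X\otimes Y)$. Then $Y\otimes W\in\acy(X)\subseteq\acy(Z)$, so $Z\otimes Y\otimes W=0$; this says $Z\otimes W\in\acy(Y)\subseteq\acy(Z)$, which gives $Z\otimes Z\otimes W=0$. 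Now Bousfield idempotence of $Z$ collapses $\acy(Z\otimes Z)=\acy(Z)$, yielding $W\in\acy(Z)$. This chain is the heart of the argument and the place where idempotence of the lower bound $Z$ is genuinely used.

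Infinite distributivity is then a formal consequence of the two formulas together with the compatibility of $\otimes$ with $\coprod$:
\[
\acy(X)\wedge\bigvee_i\acy(Y_i)=\acy(X\otimes\coprod_iY_i)=\acy(\coprod_iX\otimes Y_i)=\bigvee_i\acy(X\otimes Y_i)=\bigvee_i\bigl(\acy(X)\wedge\acy(Y_i)\bigr),
\]
and completeness is automatic since arbitrary joins exist and infima may be realized as suprema of lower bounds. The main obstacle is the greatest-lower-bound step for the meet: everything else is clean formal manipulation, but there one must navigate between acyclicity for $Z$ and for $Z\otimes Z$, and Bousfield idempotence is precisely what closes the loop.
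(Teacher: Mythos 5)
Your proof is correct and follows essentially the same route as the paper: closure of $\Dl(\sfT)$ under $\otimes$ and $\coprod$, the observation that any Bousfield idempotent lower bound $\acy(Z)$ of $\{\acy(X),\acy(Y)\}$ satisfies $\acy(Z)=\acy(Z\otimes Z)\le\acy(X\otimes Y)$, the join inherited from $\Acy(\sfT)$, and distributivity from the compatibility of $\otimes$ with coproducts. Your element-level unwinding of the key chain $Z\otimes Y\otimes W=0\Rightarrow Z\otimes Z\otimes W=0\Rightarrow W\in\acy(Z)$ is exactly the paper's displayed inequality $\acy(U)=\acy(U\otimes U)\le\acy(U\otimes Y)\le\acy(X\otimes Y)$, just spelled out.
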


\begin{proof}
If $U$ is a Bousfield idempotent object with $\acy(U)\le \acy(X)\wedge \acy(Y)$, then
\[
\acy(U)=\acy(U\otimes U)\le \acy(U\otimes Y)\le \acy(X\otimes Y)\,,
\]
where the inequalities are easily verified. If $X$ and $Y$ are  also Bousfield idempotent, then so is $X\otimes Y$, and hence one obtains that $\acy(X)\wedge\acy(Y)=\acy(X\otimes Y)$.

Given a set of Bousfield idempotent objects $X_i$, we have in $\Acy(\sfT)$
\begin{align*}
 \acy(\coprod_{i}X_i)&\ge  \acy\big((\coprod_i X_i)\otimes (\coprod_i
  X_i)\big)=\acy\big(\coprod_{i,j}(X_i\otimes X_j)\big) =
  \bigvee_{i,j}\acy(X_i\otimes X_j) \\ &\ge \bigvee_{i}\acy(X_i\otimes
  X_i) = \bigvee_{i}\acy(X_i) = \acy(\coprod_{i}X_i).
\end{align*}
Thus $\coprod_i X_i$ is Bousfield idempotent, and this implies
$\bigvee_i\acy(X_i)=\acy(\coprod_iX_i)$.

The infinite distributivity in $\Dl(\sfT)$ follows from the fact that
in $\sfT$ the tensor product distributes over set-indexed coproducts.
\end{proof}

The distributive lattice $\Dl(\sfT)$ provides the basis for an intrinsic notion of support.

\subsection*{Support}
We set $\Sp(\sfT)=\Sp(\Dl(\sfT))$ and define for each object $X$ in $\sfT$ its \emph{support} 
\[
\supp_\sfT(X)=\{\acy(P)\in\Sp(\sfT)\mid \acy(X)\not\le \acy(P)\}\,.
\]
By definition, this set is open when $X$ is Bousfield idempotent.

\begin{proposition}
\label{pr:support}
The map $\supp_\sfT(-)$ has the following properties:
\begin{enumerate}
\item $\supp_\sfT(0)=\varnothing$ and $\supp_\sfT(\one)=\Sp(\sfT)$.
\item $\supp_\sfT(\coprod_iX_i)=\bigcup_i\supp_\sfT(X_i)$ for every set of objects $\{X_i\}$ in $\sfT$.
\item $\supp_\sfT(\Si X)=\supp_\sfT(X)$ for every object $X$ in $\sfT$.
\item For every exact triangle $X'\to X\to X''\to $ in $\sfT$ one has
\[
\supp_\sfT(X)\subseteq\supp_\sfT(X')\cup\supp_\sfT(X'')\,.
\]
\item $\supp_\sfT(X\otimes Y)\subseteq\supp_\sfT(X)\cap\supp_\sfT(Y)$ for all objects $X,Y$ in $\sfT$; equality holds when  $X,Y$ are Bousfield idempotent.
\end{enumerate}
\end{proposition}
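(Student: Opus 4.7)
The plan is to deduce each clause directly from the definition of $\supp_\sfT(-)$ together with the computation of joins and meets in $\Acy(\sfT)$ already established in Proposition \ref{pr:D(T)}, without any further structural input. Items (1)--(3) are almost formal. For (1) I observe that $\acy(0)=\sfT$ is the minimum of $\Acy(\sfT)$ and $\acy(\one)=\{0\}$ its maximum; hence $\acy(0)\le \acy(P)$ for every prime, giving empty support, while $\acy(\one)\not\le \acy(P)$ for every $\acy(P)\in\Sp(\sfT)$ since primes are by definition distinct from $1$. For (2) the identity $\acy(\coprod_iX_i)=\bigvee_i\acy(X_i)$ combined with the universal property of the supremum yields $\acy(\coprod_iX_i)\le \acy(P) \iff \acy(X_i)\le \acy(P)$ for every $i$, which negates to the claimed equality of supports. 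For (3) the isomorphism $\Si X\otimes Z\cong \Si(X\otimes Z)$ shows $\acy(\Si X)=\acy(X)$, so the supports coincide on the nose.

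The main triangulated input enters in (4). Given an exact triangle $X'\to X\to X''\to$, tensoring with any $Z$ yields an exact triangle $X'\otimes Z\to X\otimes Z\to X''\otimes Z\to$; so if the outer terms vanish, then so does the middle one. Translating into the lattice, this reads $\acy(X)\supseteq\acy(X')\cap\acy(X'')=\acy(X'\oplus X'')$, i.e.\ $\acy(X)\le \acy(X')\vee\acy(X'')$ in $\Acy(\sfT)$. Now if $\acy(P)$ lies outside both $\supp_\sfT(X')$ and $\supp_\sfT(X'')$ then $\acy(X'),\acy(X'')\le \acy(P)$, and the universal property of the supremum forces $\acy(X)\le \acy(P)$, so $\acy(P)\notin\supp_\sfT(X)$; contrapositively this gives the stated inclusion.

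For (5), the containment $\supp_\sfT(X\otimes Y)\subseteq\supp_\sfT(X)\cap\supp_\sfT(Y)$ is formal: from $X\otimes Z=0\Rightarrow (X\otimes Y)\otimes Z=0$, and symmetrically in $Y$, one obtains $\acy(X\otimes Y)\le \acy(X)$ and $\acy(X\otimes Y)\le \acy(Y)$ in $\Acy(\sfT)$, so $\acy(X\otimes Y)\le \acy(P)$ whenever $\acy(X)\le\acy(P)$ or $\acy(Y)\le\acy(P)$. When $X$ and $Y$ are Bousfield idempotent, Proposition \ref{pr:D(T)} identifies $\acy(X\otimes Y)$ with the meet $\acy(X)\wedge\acy(Y)$ in $\Dl(\sfT)$, and the reverse inclusion becomes precisely the primality of $\acy(P)$ in the frame $\Dl(\sfT)$: $\acy(X)\wedge\acy(Y)\le \acy(P)$ implies $\acy(X)\le \acy(P)$ or $\acy(Y)\le \acy(P)$. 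I do not expect a serious obstacle anywhere; the one thing to stay alert to is the order-reversal between inclusion of acyclic subcategories and the lattice order on $\Acy(\sfT)$, which in particular sends the intersection $\acy(X')\cap\acy(X'')$ to the join $\acy(X')\vee\acy(X'')$ used in (4), and keeps the computation of meets in (5) tied to tensor products of idempotents via Proposition \ref{pr:D(T)}.
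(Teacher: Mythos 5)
Your proof is correct and follows essentially the same route as the paper's: reduce everything to the analogous properties of $X\mapsto\acy(X)$ in $\Acy(\sfT)$ (minimum/maximum, joins via coproducts, the triangle inequality $\acy(X)\le\acy(X')\vee\acy(X'')$, and $\acy(X\otimes Y)\le\acy(X)\wedge\acy(Y)$) and then push through the order-preserving, join-compatible map $\acy(X)\mapsto\supp_\sfT(X)$, with the equality in (5) coming from Proposition~\ref{pr:D(T)} together with primality of the points of $\Dl(\sfT)$. The paper merely states this reduction; you spell out the verifications, all of which are accurate, including the careful handling of the order reversal.
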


\begin{proof}
Properties (1)--(4) and the inclusion in (5) hold as the map taking an object $X$ to $\acy(X)$ has the
  analogous properties and  the map taking $\acy(X)$ to
  $\supp_\sfT(X)$ is order preserving.  The second claim in (5) is
  clear from Proposition~\ref{pr:D(T)}.
\end{proof}

For any objects $X,Y$ in $\sfT$, it follows from definitions that 
\[
\acy(X)\le \acy(Y) \quad\implies\quad \supp_\sfT(X)\subseteq\supp_\sfT(Y)\,.
\] 
The  result below establishes a converse, under additional hypotheses on $\sfT$. This provides a criterion for the existence of a reasonable notion of support.

Recall that $\sfT$ is a well generated tensor triangulated category.
 
\begin{proposition}
\label{pr:comparison_bclass} 
The following conditions are equivalent:
\begin{enumerate}
\item   
$\supp_\sfT(X)\neq\varnothing$ for every object $X\neq 0$, and 
\[
\supp_\sfT(X\otimes Y)=\supp_\sfT(X)\cap \supp_\sfT(Y)\quad\text{for all $X,Y$ in $\sfT$.}
\]
\item
$\supp_\sfT(X)$ is an open subset of $\Sp(\sfT)$ for every object $X$, and
\[
\acy(X)\leq \acy(Y) \iff \supp_\sfT(X)\subseteq \supp_\sfT(Y)\quad\text{for all $X,Y$ in $\sfT$.}
\]
\item Every object of $\sfT$ is Bousfield idempotent and $\Acy(\sfT)$ has enough points.
\end{enumerate} 
\end{proposition}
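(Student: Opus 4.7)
The plan is to verify the equivalences via (3) $\Rightarrow$ (1), (3) $\Rightarrow$ (2), (1) $\Rightarrow$ (3), and (2) $\Rightarrow$ (3), treating (3) as the pivot. Once every object is Bousfield idempotent one has $\Acy(\sfT) = \Dl(\sfT)$, and enough points then means the map $\acy(X) \mapsto \supp_\sfT(X) = U(\acy(X))$ is an order embedding of this frame onto the lattice of open subsets of $\Sp(\sfT)$. Throughout I will use two standard frame identities: the primality identity $U(a \wedge b) = U(a) \cap U(b)$, and the fact that if the ambient frame has enough points then $U(a) \subseteq U(b)$ implies $a \leq b$ (via $U(a) = U(a) \cap U(b) = U(a \wedge b)$).

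For (3) $\Rightarrow$ (2) and (3) $\Rightarrow$ (1) the arguments are essentially bookkeeping. Openness of $\supp_\sfT(X) = U(\acy(X))$ follows because $\acy(X) \in \Dl(\sfT)$, and the biconditional in (2) is exactly the order embedding. Nonvanishing of $\supp_\sfT(X)$ for $X \neq 0$ is enough points applied to $\acy(X) \neq 0$, and the tensor product formula in (1) comes from the identity $\acy(X \otimes Y) = \acy(X) \wedge \acy(Y)$ of Proposition \ref{pr:D(T)} combined with $U(a \wedge b) = U(a) \cap U(b)$.

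For (1) $\Rightarrow$ (3), Bousfield idempotency reduces to verifying $X \otimes X \otimes Z = 0 \iff X \otimes Z = 0$ for all $Z$, equivalently $\acy(X) \leq \acy(X \otimes X)$. The first hypothesis in (1) translates $W = 0$ into $\supp_\sfT(W) = \varnothing$, and two applications of the tensor formula give
\[
\supp_\sfT(X \otimes X \otimes Z) = \supp_\sfT(X) \cap \supp_\sfT(Z) = \supp_\sfT(X \otimes Z),
\]
so both vanish together. For enough points, suppose $\acy(X) \not\leq \acy(Y)$ in $\Acy(\sfT) = \Dl(\sfT)$; pick $Z \in \acy(Y) \setminus \acy(X)$, so $\supp_\sfT(Y) \cap \supp_\sfT(Z) = \varnothing$ while $\supp_\sfT(X) \cap \supp_\sfT(Z) \neq \varnothing$ by the tensor formula and the non-vanishing hypothesis. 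Any prime $p$ in the latter intersection lies in $\supp_\sfT(X) \setminus \supp_\sfT(Y)$, separating the two Bousfield classes.

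The subtler implication is (2) $\Rightarrow$ (3), and this is where the main obstacle lies. The key observation is that every open subset of $\Sp(\Dl(\sfT))$ is of the form $U(b)$ for some $b \in \Dl(\sfT)$, since $U$ commutes with arbitrary joins. Given this, openness of $\supp_\sfT(X)$ yields $\supp_\sfT(X) = U(\acy(B)) = \supp_\sfT(B)$ for some Bousfield idempotent $B$; the biconditional in (2) applied in both directions then forces $\acy(X) = \acy(B)$, so $X$ is Bousfield idempotent. Enough points for $\Acy(\sfT) = \Dl(\sfT)$ then reduces immediately to the biconditional applied to pairs of Bousfield idempotent objects with equal supports. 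The central conceptual step, and the one to justify carefully, is this recognition that openness of every $\supp_\sfT(X)$ is exactly what pins down each Bousfield class up to a Bousfield idempotent representative.
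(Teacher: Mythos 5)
Your proposal is correct, and although you organize the equivalences as a hub around condition (3) rather than the paper's cycle $(1)\Rightarrow(2)\Rightarrow(3)\Rightarrow(1)$, the constituent arguments coincide with the paper's: idempotency from the tensor formula plus the detection property, enough points via a separating object $Z\in\acy(Y)\setminus\acy(X)$, and for $(2)\Rightarrow(3)$ the observation that every open set of $\Sp(\Dl(\sfT))$ has the form $U(b)$ with $b\in\Dl(\sfT)$, so an open support pins down the Bousfield class via an idempotent representative. No gaps.
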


\begin{proof}
  (1) $\Rightarrow$ (2): The assumption implies that for each object
  $X$ in $\sfT$
  \[\acy(X)=\{Y\in\sfT\mid \supp_\sfT(X)\cap
  \supp_\sfT(Y)=\varnothing\}.\]
From $\supp_\sfT(X\otimes X)=\supp_\sfT(X)$ it then follows that $\acy(X\otimes X)=\acy(X)$. Thus
  $X$ is Bousfield idempotent, and so $\supp_\sfT(X)$ is open.

  If $\acy(X)\not\leq \acy(Y)$, then there exists a $U$ such that
  $U\otimes X\neq 0$ and $U\otimes Y= 0$. Thus $\supp_{\sfT}(U)\cap
  \supp_{\sfT}(X)\neq\varnothing$ while $\supp_{\sfT}(U)\cap
  \supp_{\sfT}(Y)=\varnothing$, and hence $\supp_\sfT(X)\not\subseteq
  \supp_\sfT(Y)$. The other implication always holds.

(2) $\Rightarrow$ (3): The frame $\Dl(\sfT)$ has enough points, since $\supp_\sfT(X)=\supp_\sfT(Y)$ implies $\acy(X)=\acy(Y)$. 

If an object $X$ in $\sfT$ is such that $\supp_\sfT(X)$ is open, then there exists a Bousfield idempotent object $Y$ such that $\supp_\sfT(X)=\supp_\sfT(Y)$; this implies $\acy(X)=\acy(Y)$, and hence $X$ is also Bousfield idempotent.

  (3) $\Rightarrow$ (1): The equality $\supp_\sfT(X\otimes
  Y)=\supp_\sfT(X)\cap \supp_\sfT(Y)$ holds for all Bousfield
  idempotent objects $X,Y$ by Proposition~\ref{pr:support}. If $X\neq
  0$, then $\acy(X)\neq \acy(0)$, and therefore
  $\supp_\sfT(X)\neq\varnothing$ since the Bousfield lattice has
  enough points.
\end{proof}

\begin{remark}
In the stable homotopy category of spectra, there are objects that are not Bousfield idempotent \cite[Lemma~2.5]{Bousfield:1979a}; see \cite[Theorem~6.1]{Dwyer/Palmieri:2008a} for examples in the derived category of a ring. Also, a priori it is possible that $\Sp(\sfT)=\varnothing$.

On the other hand, any triangulated category $\sfT$ which is
stratified by the action of a graded-commutative noetherian ring
satisfies the equivalent conditions of
Proposition~\ref{pr:comparison_bclass}; see Example~\ref{ex:loc}
below.
\end{remark}

\subsection*{Universality}
The map $\supp_\sfT(-)$ enjoys the following universal property. Its
statement is inspired by the universality of Balmer's support
\cite{Balmer:2005a}, and is an immediate consequence of Stone duality.

\begin{proposition}
\label{pr:universal}
Suppose each object in $\sfT$ is Bousfield idempotent. Let $U$ be a
topological space and $\sigma$ a map that assigns to each object
$X\in\sfT$ an open subset $\si(X)$ of $U$ with the following
properties:
\begin{enumerate}
\item $\si(X\otimes Y)=\si(X)\cap\si(Y)$ for all $X,Y$ in $\sfT$,
\item $\si(\coprod_i X_i)=\bigcup_i\si(X_i)$ for every set of objects $X_i$ in $\sfT$,
\item $\acy(X)=\acy(Y)\implies\si(X)=\si(Y)$,  for all $X,Y$ in $\sfT$.
\end{enumerate}
Then there exists a unique continuous map $f\col U\to\Sp(\sfT)$ such
that 
\[
\si(X)=f^{-1}(\supp_\sfT(X))\qquad\text{for all }X\in\sfT\,.
\]
\end{proposition}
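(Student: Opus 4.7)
My plan is to reduce the statement directly to the Stone duality adjunction~\eqref{eq:duality}. Since every object of $\sfT$ is Bousfield idempotent, Proposition~\ref{pr:D(T)} identifies $\Acy(\sfT)=\Dl(\sfT)$ as a frame with $\acy(X)\wedge\acy(Y)=\acy(X\otimes Y)$ and $\bigvee_i\acy(X_i)=\acy(\coprod_iX_i)$, and by construction $\supp_\sfT(X)=U(\acy(X))$.

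The first step is to observe that condition~(3) lets $\si$ factor through the assignment $X\mapsto\acy(X)$, producing a well-defined map
\[
\bar\si\col \Acy(\sfT) \lto \mathcal{O}(U),\qquad \acy(X)\longmapsto \si(X).
\]
Condition~(1) then says that $\bar\si$ preserves binary meets, while condition~(2), applied also to the empty index set, says that $\bar\si$ preserves arbitrary joins (in particular $\bar\si(\acy(0))=\varnothing$). Combined with the identity $\si(\one)=U$---which is forced by the desired conclusion evaluated at $X=\one$ since $\supp_\sfT(\one)=\Sp(\sfT)$, and which I would therefore include as an implicit part of the hypothesis---these properties assert that $\bar\si$ is a morphism of frames.

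The second and final step is to apply~\eqref{eq:duality}, which associates to $\bar\si$ a unique continuous map $f\col U\to\Sp(\Acy(\sfT))=\Sp(\sfT)$ characterized by the identity $f^{-1}(U(a))=\bar\si(a)$ for every $a\in\Acy(\sfT)$. Specializing to $a=\acy(X)$ yields $\si(X)=f^{-1}(\supp_\sfT(X))$ for all $X\in\sfT$, and the uniqueness of $f$ follows from the uniqueness side of~\eqref{eq:duality}. The only real obstacle I expect is the top-element bookkeeping: the conditions (1)--(3) do not formally force $\si(\one)=U$, yet this identity is necessary for the conclusion and must therefore be built into the setup; once this point is settled, the rest of the argument is a purely formal consequence of Stone duality and requires no additional input from the triangulated setting.
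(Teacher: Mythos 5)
Your proof is correct and is essentially the paper's own argument: conditions (1)--(3) make $X\mapsto\si(X)$ descend to a frame morphism $\Dl(\sfT)=\Acy(\sfT)\to\mathcal O(U)$, and the conclusion is then exactly the Stone duality adjunction \eqref{eq:duality}. Your remark about $\si(\one)=U$ is a fair observation about the statement: this identity is needed for the conclusion and is not forced by (1)--(3) as literally written (the constant map $\si\equiv\varnothing$ satisfies them), so one should read condition (1) as preservation of all finite, including empty, tensor products, which is the standard convention for frame morphisms.
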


\begin{proof}
The map $\si$ yields a frame morphism $\Dl(\sfT)\to\mathcal O(U)$
which then corresponds uniquely to a continuous map $f\col U\to\Sp(\sfT)$, by Stone
duality \eqref{eq:duality}.
\end{proof}

\begin{example}\label{ex:loc}
  Let $\sfT$ be a compactly generated tensor triangulated category
  stratified via the action of a graded-commutative noetherian ring
  $R$. Consider the set $\supp_R(\sfT)$ endowed with the discrete
  topology. It follows from Theorem~\ref{thm:comparison_bclass} that
  the map $\supp_R(-)$ satisfies all the properties listed in
  Proposition~\ref{pr:universal}; it thus induces a continuous map
  $\supp_R(\sfT)\to\Sp(\sfT)$. This map sends $\fp$ in $\supp_R(\sfT)$
  to $\acy(\coprod_{\fq\ne\fp}\gam_\fq\one)$ and is actually a
  homeomorphism, since $\supp_R(-)$ gives an isomorphism
\begin{equation*}
\Dl(\sfT)=\Acy(\sfT)\stackrel{\sim}\lto\mathbf 2^{\supp_R(\sfT)}=\mathcal O(\supp_R(\sfT)),
\end{equation*}
by Corollary~\ref{cor:bousfield-lattice}.
\end{example}

\subsection*{Functoriality}
A functor $F\col\sfT\to\sfU$ between tensor triangulated categories is
called \emph{tensor triangulated} if it is an exact functor that
respects the monoidal structures; we do not assume that $F$ preserves
the tensor unit. We call such a functor \emph{conservative} if
$\acy(X)=\acy(Y)$ implies $\acy(FX)=\acy(FY)$ for all objects $X,Y$ in
$\sfT$.

\begin{proposition}
  Let $F\col\sfT\to\sfU$ be a tensor triangulated functor between well
  generated tensor triangulated categories. Assume $F$ is conservative
  and preserves set-indexed coproducts.  Then the map sending
  $\acy(X)$ to $\acy(FX)$ induces a morphism of frames
  $\Dl(\sfT)\to\Dl(\sfU)$ and hence a continuous map $\Sp(F)\col
  \Sp(\sfU)\to\Sp(\sfT)$. For each object $X$ in $\sfT$, one has
\[
\supp_\sfU(FX)=\Sp(F)^{-1}(\supp_\sfT(X))\,.
\]
\end{proposition}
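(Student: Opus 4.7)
The plan proceeds in three stages: verifying that $\acy(X)\mapsto\acy(FX)$ defines a frame morphism $F_*\col\Dl(\sfT)\to\Dl(\sfU)$, passing to spectra by Stone-duality functoriality, and then deducing the support formula.

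First, the hypothesis that $F$ is conservative is precisely well-definedness of $\acy(X)\mapsto\acy(FX)$ on Bousfield classes. To see that Bousfield idempotency is preserved, use the natural isomorphism $F(X\otimes X)\cong FX\otimes FX$ built into the tensor-triangulated structure of $F$: if $\acy(X)=\acy(X\otimes X)$, then conservativity yields $\acy(FX)=\acy(F(X\otimes X))=\acy(FX\otimes FX)$, so $FX$ is Bousfield idempotent. That the resulting $F_*$ respects the meet $\acy(X)\wedge\acy(Y)=\acy(X\otimes Y)$ and arbitrary join $\bigvee_i\acy(X_i)=\acy(\coprod_iX_i)$ of Proposition~\ref{pr:D(T)} then follows at once from $F$'s compatibility with the tensor product and with set-indexed coproducts, so $F_*$ is a frame morphism.

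Second, the continuous map $\Sp(F)\col\Sp(\sfU)\to\Sp(\sfT)$ is supplied directly by the Stone-duality functoriality recalled in Section~\ref{se:Stone}, and the naturality identity $\Sp(F)^{-1}(U(a))=U(F_*(a))$ for every $a\in\Dl(\sfT)$ is equation~\eqref{eq:open} applied to $F_*$.

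For the support formula, observe that the inclusion $\Dl(\sfT)\hookrightarrow\Acy(\sfT)$ preserves joins (both are computed by coproducts in $\sfT$) and so has a left adjoint $a\mapsto\widetilde a$, the \emph{idempotent closure}. From the adjunction $\widetilde a\le p\Longleftrightarrow a\le p$ for $p\in\Dl(\sfT)$ one deduces $\supp_\sfT(X)=U(\widetilde{\acy(X)})$, and likewise $\supp_\sfU(FX)=U(\widetilde{\acy(FX)})$. Applying~\eqref{eq:open} to $\widetilde{\acy(X)}\in\Dl(\sfT)$ gives
\[
\Sp(F)^{-1}(\supp_\sfT(X))=U(F_*(\widetilde{\acy(X)})),
\]
and the concluding step is to identify this open subset with $\supp_\sfU(FX)$. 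I expect this will come down to showing $F_*(\widetilde{\acy(X)})=\widetilde{\acy(FX)}$ in $\Dl(\sfU)$: one direction follows by applying the join-preserving (hence order-preserving) extension $\overline F_*\col\Acy(\sfT)\to\Acy(\sfU)$ of $F_*$ to the unit $\acy(X)\le\widetilde{\acy(X)}$, which gives $\acy(FX)\le F_*(\widetilde{\acy(X)})$ and therefore $\widetilde{\acy(FX)}\le F_*(\widetilde{\acy(X)})$. The reverse inequality, which I anticipate being the main obstacle, should follow by picking a Bousfield idempotent representative $X'\in\sfT$ with $\acy(X')=\widetilde{\acy(X)}$, writing $F_*(\widetilde{\acy(X)})=\acy(FX')$, and comparing against each prime $q\in\Sp(\sfU)$ via the characterization of $\Sp(F)(q)$ as the largest idempotent class $a\in\Dl(\sfT)$ with $F_*(a)\le q$.
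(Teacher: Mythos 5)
The first two stages of your argument are sound and are surely what the authors intend by ``straightforward'': conservativity makes $\acy(X)\mapsto\acy(FX)$ well defined, the isomorphism $F(X\otimes X)\cong FX\otimes FX$ preserves Bousfield idempotency, and compatibility with $\otimes$ and coproducts gives a frame morphism $F_*\col\Dl(\sfT)\to\Dl(\sfU)$, whence $\Sp(F)$ and the identity \eqref{eq:open}. The problem is in your third stage. The adjoint functor theorem for posets is applied backwards: a map of complete lattices preserving all joins has a \emph{right} adjoint, namely $a\mapsto\bigvee\{b\in\Dl(\sfT)\mid b\le a\}$, the largest idempotent class below $a$. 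The \emph{left} adjoint you want --- the smallest Bousfield idempotent class above $\acy(X)$, satisfying $\widetilde a\le p\iff a\le p$ --- would require $\Dl(\sfT)$ to be closed under arbitrary meets computed in $\Acy(\sfT)$, and nothing establishes this: Proposition~\ref{pr:D(T)} only shows that $\acy(X\otimes Y)$ is the meet \emph{within} $\Dl(\sfT)$, which may well be smaller than the meet in $\Acy(\sfT)$. So $\widetilde{\acy(X)}$ is not known to exist, and your entire third stage rests on it. On top of that, the final identification $F_*(\widetilde{\acy(X)})=\widetilde{\acy(FX)}$ is left open by your own admission.

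The route that avoids the phantom closure operator is to unwind definitions at each prime $q\in\Sp(\sfU)$, setting $p=\Sp(F)(q)=\bigvee\{a\in\Dl(\sfT)\mid F_*(a)\le q\}$; the displayed formula amounts to the equivalence $\acy(FX)\le q\iff\acy(X)\le p$. One direction is clean: the map $\Acy(\sfT)\to\Acy(\sfU)$ sending $\acy(X)$ to $\acy(FX)$ preserves arbitrary joins (conservativity plus preservation of coproducts), so $\acy(X)\le p$ forces $\acy(FX)\le\bigvee\{F_*(a)\mid F_*(a)\le q\}\le q$. The converse is immediate when $X$ is Bousfield idempotent, since then $\acy(X)$ is itself one of the terms in the join defining $p$; this, combined with \eqref{eq:open}, is presumably the intended one-line proof. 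For a non-idempotent $X$, however, the converse is exactly the obstacle you ran into in different clothing, and I do not see how to close it from the axioms alone (the paper offers no argument either, its proof being the single word ``straightforward''). So you have correctly located the real difficulty, but neither your left adjoint --- which is not known to exist --- nor your final comparison at primes resolves it for non-idempotent objects; as written the proposal establishes the support formula only for Bousfield idempotent $X$, and does so by a more roundabout route than necessary.
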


\begin{proof}
The proof is straightforward.
\end{proof}

We do not have a general criterion for a functor to be conservative,
but there is the following important example.

\begin{proposition}\label{pr:conservative}
  Let $\sfT$ be a well generated tensor triangulated category and
  $\sfS$ a tensor closed localizing subcategory such that $\sfS$ is
  well generated and $\sfS^\perp$ is tensor closed. Consider the
  induced tensor triangulated structure for the Verdier quotient
  $\sfT/\sfS$. Then the inclusion functor $\sfS\to\sfT$, the quotient
  functor $\sfT\to\sfT/\sfS$, and their right adjoints are tensor
  triangulated and conservative. These functors induce isomorphisms
  \[\Acy(\sfT)\xra{\sim}\Acy(\sfS)\times\Acy(\sfT/\sfS)\quad\text{and}\quad
\Dl(\sfT)\xra{\sim}\Dl(\sfS)\times\Dl(\sfT/\sfS).\]
\end{proposition}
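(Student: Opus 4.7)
The plan is to exploit the Bousfield localization triangle
\[
\Gamma X\lto X\lto LX\lto
\]
associated with $\sfS$, where $\Gamma\col\sfT\to\sfS$ is the right adjoint of the inclusion $i$ and $L=RQ$ is the composition of $Q$ with its right adjoint $R$. Both adjoints exist: $R$ is given by Proposition~\ref{prop:loc} applied to $\sfT$, and $\Gamma$ arises as the fibre of the unit $\Id\to L$. The two tensor-closedness hypotheses together give $\sfS\cap\sfS^\perp=0$, which is the source of all the subsequent vanishings.

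The crucial step is to prove the tensor-multiplicativity identities
\[
\Gamma(X\otimes Y)\cong \Gamma X\otimes \Gamma Y\quad\text{and}\quad L(X\otimes Y)\cong LX\otimes LY\quad\text{for all }X,Y\in\sfT\,.
\]
I would obtain these by tensoring the localization triangle of $X$ with $Y$: the outer terms of the resulting triangle lie in $\sfS$ and $\sfS^\perp$ respectively by the tensor-closedness hypotheses, so by uniqueness this \emph{is} the localization triangle of $X\otimes Y$, yielding $\Gamma(X\otimes Y)\cong \Gamma X\otimes Y$ and $L(X\otimes Y)\cong LX\otimes Y$. Tensoring the triangle of $Y$ with $\Gamma X$ (resp.\ with $LX$) then identifies $\Gamma X\otimes Y$ with $\Gamma X\otimes \Gamma Y$ (resp.\ $LX\otimes Y$ with $LX\otimes LY$), because in each case the missing corner lies in $\sfS\cap\sfS^\perp=0$. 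This is essentially the only step with content in the whole proposition.

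With these identities in hand, each of $i$, $Q$, $\Gamma$, $R$ preserves tensor products: for $Q$ this is the standard construction of the tensor product on the quotient $\sfT/\sfS$, which is well defined because $\sfS$ is a tensor ideal; for $R$ use that the tensor product on $\sfS^\perp$ coincides with the one from $\sfT$ since $\sfS^\perp$ is tensor closed. Conservativity of all four functors reduces to the observation that, for $X\in\sfS$ and $W\in\sfT$, $iX\otimes W\cong \Gamma X\otimes \Gamma W$ (the $LW$-part being in $\sfS\cap\sfS^\perp$), so $\acy_\sfT(iX)=\{W\in\sfT:\Gamma W\in\acy_\sfS(X)\}$ is determined by $\acy_\sfS(X)$, and the remaining three cases are symmetric.

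Finally, I would build the isomorphism as
\[
\Phi\col \Acy(\sfT)\lto \Acy(\sfS)\times\Acy(\sfT/\sfS),\qquad \acy_\sfT(X)\longmapsto\bigl(\acy_\sfS(\Gamma X),\acy_{\sfT/\sfS}(QX)\bigr),
\]
well-defined by conservativity of $\Gamma$ and $Q$. Injectivity uses the decomposition $\acy_\sfT(X)=\acy_\sfT(\Gamma X)\cap\acy_\sfT(LX)$, which follows at once from $\sfS\cap\sfS^\perp=0$ applied to the tensored triangle, together with the fact that the two intersected sets are recovered from their respective components. For surjectivity, given $A\in\sfS$ and $B\in\sfT/\sfS$, the object $X=iA\oplus RB$ satisfies $\Gamma X=iA$ and $QX=B$ by the identities $\Gamma i=\Id$, $Qi=0$, $\Gamma R=0$, $QR=\Id$. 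The identities $\Gamma(X\otimes X)\cong \Gamma X\otimes \Gamma X$ and $Q(X\otimes X)\cong QX\otimes QX$ show that $X$ is Bousfield idempotent precisely when $\Gamma X$ and $QX$ are, so $\Phi$ restricts to the desired frame isomorphism on $\Dl$; lattice-theoretically it respects joins and meets because $\Gamma$ and $Q$ preserve coproducts and tensor products. The main obstacle throughout is really only the tensor-multiplicativity identity; everything else is essentially bookkeeping.
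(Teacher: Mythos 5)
Your proposal is correct and follows essentially the same route as the paper: the paper's Lemma~\ref{le:conservative} establishes $\gam X\cong\gam\one\otimes X$ and $LX\cong L\one\otimes X$ (the special case $Y=\one$ of your multiplicativity identity, proved by the same tensoring-the-localization-triangle argument), and the conservativity statements and the product decomposition $\acy(X)\mapsto(\acy(\gam X),\acy(FX))$ with inverse $(\acy(U),\acy(V))\mapsto\acy(U)\vee\acy(GV)$ are exactly yours. The only quibble is that $\sfS\cap\sfS^\perp=0$ holds for any localizing subcategory admitting a localization functor (an object in both has zero identity morphism); the tensor-closedness hypotheses are needed not for this but, as you in fact use them, to place objects such as $\gam X\otimes LY$ in that intersection.
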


The proof is based on the following lemma.
 
\begin{lemma}\label{le:conservative}
  There exists an exact localization functor $L\col \sfT\to \sfT$ with
  $\Ker L=\sfS$, and for each object $X$ in $\sfT$ an exact triangle \[\gam
  X\to X\to L X\to\quad\text{with}\quad \gam
  X\cong\gam\one\otimes X \quad\text{and}\quad L X\cong L\one\otimes X\]
\end{lemma}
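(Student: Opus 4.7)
The plan is to first produce the localization functor using the general machinery of Proposition~\ref{prop:loc}, and then to identify its components with the functors $\gam\one\otimes-$ and $L\one\otimes-$ by invoking the uniqueness of localization triangles.

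Since $\sfS$ is well generated, it admits a single generator (the coproduct of a small set of generators), so condition~(4) of Proposition~\ref{prop:loc} is satisfied. The implication $(4)\Rightarrow(5)$ then provides an exact localization functor $L\col\sfT\to\sfT$ with $\Ker L=\sfS$. Writing $\gam X$ for the fibre of the natural map $X\to LX$, one obtains for every $X$ in $\sfT$ an exact triangle
\[
\gam X\lto X\lto LX\lto
\]
with $\gam X\in\sfS$ and $LX\in\sfS^\perp$. A standard argument (using that $\Hom_\sfT(S,T)=0$ for $S\in\sfS$ and $T\in\sfS^\perp$) shows that any triangle $S'\to X\to T'\to$ with $S'\in\sfS$ and $T'\in\sfS^\perp$ is canonically isomorphic to the one above.

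Now apply the localization triangle to the unit object to obtain $\gam\one\to\one\to L\one\to$, and tensor this triangle with $X$. Since the tensor product is exact in each variable, this yields an exact triangle
\[
\gam\one\otimes X\lto X\lto L\one\otimes X\lto.
\]
By the hypothesis that $\sfS$ is tensor closed, $\gam\one\in\sfS$ implies $\gam\one\otimes X\in\sfS$; by the hypothesis that $\sfS^\perp$ is tensor closed, $L\one\in\sfS^\perp$ implies $L\one\otimes X\in\sfS^\perp$. The uniqueness statement of the previous paragraph then yields canonical isomorphisms $\gam\one\otimes X\cong\gam X$ and $L\one\otimes X\cong LX$, completing the proof.

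The only step that requires any care is the uniqueness of the localization triangle, but this is a formal consequence of the orthogonality $\Hom_\sfT(\sfS,\sfS^\perp)=0$, so there is no genuine obstacle.
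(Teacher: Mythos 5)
Your proof is correct and follows essentially the same route as the paper: obtain $L$ from Proposition~\ref{prop:loc}, tensor the localization triangle of $\one$ with $X$, and use tensor closedness of $\sfS$ and $\sfS^\perp$ together with the uniqueness of localization triangles to identify the two triangles. The paper is merely a bit terser about the uniqueness step, which you rightly flag as the only point needing care.
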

\begin{proof}
  The assumption on $\sfS$ to be well generated implies that there
  exists an exact localization functor $L\col \sfT\to \sfT$ with $\Ker
  L=\sfS$, by Proposition~\ref{prop:loc}. This functor factors through
  the quotient functor $F\col\sfT\to\sfT/\sfS$ via a functor
  $G\col \sfT/\sfS\to \sfT$, which is a right adjoint of $F$ and
  induces an equivalence $\sfT/\sfS\xra{\sim}\sfS^\perp$. 

  Completing for each object $X$ in $\sfT$ the natural morphism $X\to
  L X$ yields a functorial exact triangle
\[\gam X\lto X\lto L X\lto\]
with $\gam X\in\sfS$. Now apply $-\otimes X$ to the localization
triangle $\gam \one\to \one\to L\one\to$. Then $\gam\one\otimes X$
belongs to $\sfS$, while $L\one\otimes X$ belongs to
$\sfS^\perp$. Thus $LX\cong L\one\otimes X$ and $\gam
X\cong\gam\one\otimes X$.  Note that  $\gam$ provides a
right adjoint of the inclusion $\sfS\to\sfT$.
\end{proof}

\begin{proof}[Proof of Proposition~\ref{pr:conservative}]  
We keep the notation from Lemma~\ref{le:conservative} and its proof.

Given $X\in\sfS$ and $Y\in\sfT$, we have $X\otimes Y=0$ iff $X\otimes\gam Y=0$. Thus the inclusion is conservative. The identity $\acy(\gam Y)=\acy(Y)\cap\sfS$ implies that $\gam$ is conservative.

By a similar argument, the inclusion $\sfS^\perp\to\sfT$ and its left adjoint $L\col\sfT\to\sfS^\perp$ are conservative. The composite $E\col\sfS^\perp\to\sfT\to\sfT/\sfS$ is an equivalence of tensor triangulated categories, hence conservative. The composite of $E$ with $L$ is isomorphic to $F$, so the latter is conservative. The composite of $E^{-1}$ with the inclusion $\sfS^\perp\to\sfT$ is isomorphic to $G$. Thus $G$ is conservative.

The isomorphism  $\Acy(\sfT)\to\Acy(\sfS)\times\Acy(\sfT/\sfS)$ sends
$\acy(X)$ to $(\acy(\gam X),\acy(FX))$, and its inverse
sends $(\acy(U),\acy(V))$ to $\acy(U)\vee\acy(GV)$. Note that $X$ in
$\sfT$ is Bousfield idempotent if and only if $\gam X$ and $FX$ are Bousfield idempotent.
\end{proof}

\begin{corollary}\label{co:recollement}
Let $\sfS_1=\tloc(X_1)$ and $\sfS_2=\tloc(X_2)$ be tensor closed
localizing subcategories of $\sfT$ such that $\sfS_1^\perp$ and $\sfS_2^\perp$ are tensor
closed. Set $\sfS=\tloc(X_1\amalg X_2)$. Then the
Bousfield lattice $\Acy(\sfT)$ admits the following decomposition:
\[\Acy(\sfT) \cong\Acy(\sfT/\sfS)\times\Acy(\sfS_1/\sfS_1\cap\sfS_2)
  \times\Acy(\sfS_2/\sfS_1\cap\sfS_2)\times\Acy(\sfS_1\cap\sfS_2)\]
\end{corollary}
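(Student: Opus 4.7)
The plan is to iterate Proposition~\ref{pr:conservative} three times, each application peeling off one of the four factors. Set $\sfS_0=\sfS_1\cap\sfS_2$ for brevity.

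Since $\sfS_1^\perp$ and $\sfS_2^\perp$ are tensor closed, Lemma~\ref{le:conservative} provides smashing localization triangles $\gam_i\one\to\one\to L_i\one\to$ with $\gam_i X\cong\gam_i\one\otimes X$ for $i=1,2$. Tensoring these together yields a triangle $\gam_0\one\to\one\to L_0\one\to$ in which $\gam_0\one:=\gam_1\one\otimes\gam_2\one$ lies in $\sfS_0$, and $\gam_0\one\otimes X\cong\gam_1\gam_2 X$ is the right adjoint of the inclusion $\sfS_0\hookrightarrow\sfT$. Thus $\sfS_0=\tloc(\gam_0\one)$ is well generated and $\sfS_0^\perp$ is tensor closed, so Proposition~\ref{pr:conservative} yields
\[
\Acy(\sfT)\cong\Acy(\sfS_0)\times\Acy(\sfT/\sfS_0).
\]
I then iterate: inside $\sfT/\sfS_0$, the image $\sfS_1/\sfS_0$ is tensor closed, well generated by the image of $X_1$, and its orthogonal corresponds to $\sfS_1^\perp$, hence is tensor closed. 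Combining Proposition~\ref{pr:conservative} with $(\sfT/\sfS_0)/(\sfS_1/\sfS_0)\cong\sfT/\sfS_1$ gives $\Acy(\sfT/\sfS_0)\cong\Acy(\sfS_1/\sfS_0)\times\Acy(\sfT/\sfS_1)$. A third application to $\sfS/\sfS_1\subseteq\sfT/\sfS_1$, using $\sfS^\perp=\sfS_1^\perp\cap\sfS_2^\perp$ and $(\sfT/\sfS_1)/(\sfS/\sfS_1)\cong\sfT/\sfS$, produces
\[
\Acy(\sfT)\cong\Acy(\sfS_0)\times\Acy(\sfS_1/\sfS_0)\times\Acy(\sfS/\sfS_1)\times\Acy(\sfT/\sfS).
\]

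To finish I identify $\sfS/\sfS_1$ with $\sfS_2/\sfS_0$ as tensor triangulated categories. The composite $\sfS_2\hookrightarrow\sfS\to\sfS/\sfS_1$ has kernel $\sfS_1\cap\sfS_2=\sfS_0$, so it descends to a functor $\sfS_2/\sfS_0\to\sfS/\sfS_1$. For essential surjectivity, given $Y\in\sfS$ the localization $L_1 Y\cong L_1\one\otimes Y$ lies in $\sfS\cap\sfS_1^\perp$ and its further $\sfS_2$-localization is $L_2 L_1 Y\cong L_1\one\otimes L_2\one\otimes Y\in\sfS^\perp\cap\sfS=0$, so $L_1 Y\cong\gam_2 L_1 Y\in\sfS_2$ and $Y,L_1 Y$ have equal image in $\sfS/\sfS_1$. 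Full faithfulness follows from the analogous recollement argument yielding $\sfS_2/\sfS_0\xra{\sim}\sfS_2\cap\sfS_1^\perp$, which is also the essential image of $\sfS/\sfS_1$ under the right adjoint of $\sfS\to\sfS/\sfS_1$.

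The main obstacle is the bookkeeping required to check that the tensor-orthogonal conditions are preserved under passage to each successive Verdier quotient $\sfT/\sfS_0$ and $\sfT/\sfS_1$, together with the identification $\sfS/\sfS_1\cong\sfS_2/\sfS_0$. Both ultimately reduce to the tensor-idempotent formalism of Lemma~\ref{le:conservative} and the good behaviour of smashing subcategories under Verdier quotients.
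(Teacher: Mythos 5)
Your argument is correct and is essentially the paper's own: three iterated applications of Proposition~\ref{pr:conservative} together with the standard Verdier isomorphism identifying $\sfS/\sfS_1$ with $\sfS_2/\sfS_1\cap\sfS_2$ (the paper cites \cite[II.2.3]{Verdier:1996a} for this, whereas you prove it directly via the smashing idempotents). The only difference is cosmetic ordering --- you peel off $\sfS_1\cap\sfS_2$ first and work upward through quotients of $\sfT$, while the paper first splits off $\sfT/\sfS$ and then decomposes $\sfS$ --- and both versions require the same routine verification that the hypotheses of Proposition~\ref{pr:conservative} persist at each stage.
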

\begin{proof}
From Proposition~\ref{pr:conservative} one gets
\begin{align*}
  \Acy(\sfT) &\cong\Acy(\sfT/\sfS)\times\Acy(\sfS)\\
  &\cong\Acy(\sfT/\sfS)\times\Acy(\sfS/\sfS_1\cap\sfS_2)\times\Acy(\sfS_1\cap\sfS_2)\\
  &\cong\Acy(\sfT/\sfS)\times\Acy(\sfS_1/\sfS_1\cap\sfS_2)
  \times\Acy(\sfS_2/\sfS_1\cap\sfS_2)\times\Acy(\sfS_1\cap\sfS_2).
\end{align*}
For the last isomorphism one uses the pair of standard isomorphisms
\[\sfS_1/\sfS_1\cap\sfS_2\xra{\sim}\sfS/\sfS_2
\xra{\sim}(\sfS/\sfS_1\cap\sfS_2)/(\sfS_2/\sfS_1\cap\sfS_2);\] see
\cite[II.2.3]{Verdier:1996a}.  
\end{proof}

The decomposition of $ \Acy(\sfT)$ is illustrated by the following
Hasse type diagram. Given any localizing subcategory
$\sfS\subseteq\sfT$, we write $\gam_\sfS$ for a right adjoint of the
inclusion functor $\sfS\to\sfT$, assuming that it exists.

\[
\xymatrixrowsep{1pc}
\xymatrixcolsep{.2pc}
\xymatrix{
&\acy(\one) \ar@{-}[d]\\
&\acy(\gam_\sfS\one) \ar@{-}[dl]\ar@{-}[dr]\\
\acy(\gam_{\sfS_1}\one)
\ar@{-}[dr]&&\acy(\gam_{\sfS_2}\one)\ar@{-}[dl]\\
&\acy(\gam_{\sfS_1\cap\sfS_2}\one) \ar@{-}[d]\\
&\acy(0)
}
\]

\begin{example}
\label{ex:conservative}
Let $\sfT$ be a compactly generated tensor triangulated category with an action of a graded-commutative noetherian ring $R$. Denote for each $\fp\in\Spec R$ by $\gam_\fp\sfT$ the essential image of the functor $\gam_\fp\col\sfT\to \sfT$.  From the construction of $\gam_\fp$ in \cite[\S5]{\bik:2008a} and Proposition~\ref{pr:conservative}, it follows that the functors $\gam_\fp\col\sfT\to\gam_\fp\sfT$ and the inclusion $\gam_\fp\sfT\to\sfT$ are tensor triangulated and conservative. Thus these functors induce isomorphisms
\[
\Acy(\sfT)\xra{\sim}\prod_{\fp\in\Spec R}\Acy(\gam_\fp\sfT)
\quad\text{and}\quad
\Dl(\sfT)\xra{\sim}\prod_{\fp\in\Spec R}\Dl(\gam_\fp\sfT)\,.
\] 
The inverse maps send $(\acy(X_\fp))_\fp$ to $\acy(\coprod_\fp X_\fp)$; see \cite[Theorem~7.2]{\bik:2009a}. When $\sfT$ is stratified by $R$, this yields the isomorphism $\Acy(\sfT)\xra{\sim}\mathbf 2^{\supp_R(\sfT)}$ from Corollary~\ref{cor:bousfield-lattice}.
\end{example}

\begin{example}
\label{ex:noeth-scheme}
Fix a separated noetherian scheme $(X,\mathcal O_X)$ and consider the
derived category $\sfD(\Qcoh X)$ of the category of quasi-coherent
$\mathcal O_X$-modules.  This is a compactly generated tensor
triangulated category. The Bousfield lattice of $\sfD(\Qcoh X)$ is
isomorphic to the lattice of subsets of $X$.  For the affine case,
this assertion follows from Neeman's work \cite{Neeman:1992a}; see
Example~\ref{ex:ca}. The general case then follows, using
Corollary~\ref{co:recollement}; see also \cite{ASS:2004a}.  The model
for this is Gabriel's analysis of the abelian category of
quasi-coherent $\mathcal O_X$-modules \cite[Chap.~VI]{Gabriel:1962a}.
\end{example}

\section{Support for compact objects}
\label{sec:Compact objects}
Let $(\sfT,\otimes,\one)$ be a compactly generated tensor triangulated
category, as in Section~\ref{se:stratification}.  The full subcategory
$\sfT^c$ consisting of all compact objects is a skeletally small
tensor triangulated category.  Let $\Th(\sfT^c)$ denote the set of
thick subcategories of $\sfT^c$ that are tensor closed. This set is
partially ordered by inclusion and is a complete lattice, since for
any set of elements $\sfC_i$ in $\Th(\sfT^c)$ there is an equality
\[
\bigwedge_i\sfC_i=\bigcap_i\sfC_i\,.
\] 
In this section we discuss the support of objects in $\sfT^c$, using the spectrum associated with $\Th(\sfT^c)$.  Then we relate this support to the structure of the Bousfield lattice of the ambient category $\sfT$. We begin by recalling pertinent facts about
compactly generated lattices.

\subsection*{Compact generation}
Fix a complete lattice $\Lambda$.  An element $a\in\Lambda$ is \emph{compact} if $a\le\bigvee_{i\in I}b_i$ implies $a\le\bigvee_{i\in J}b_i$ for some finite subset $J\subseteq I$.  We write $\Lambda^c$ for the partially ordered subset of compact elements in $\Lambda$. Note that $a,b\in\Lambda^c$ implies $a\vee b\in\Lambda^c$. The lattice $\Lambda$ is \emph{compactly generated} if every element in $\Lambda$ is the supremum of compact elements.

Any compactly generated lattice  is essentially determined by its subset of compact elements. To explain this, we need to introduce the ideal completion of a partially ordered set.

Let $\Gamma$ be a partially ordered set having an infimum, and suppose
that every finite subset has a supremum.  A non-empty subset
$I\subseteq \Gamma$ is an \emph{ideal} of $\Gamma$ if for all $a,b\in
\Gamma$
\begin{enumerate}
\item $a\leq b$ and $b\in I$ imply $a\in I$, and
\item $a,b\in I$ implies $a\vee b\in I$.
\end{enumerate}
Given $a\in \Gamma$, let $I(a)=\{x\in \Gamma\mid x\leq a\}$ denote the
\emph{principal ideal} generated by $a$. The set $\widehat \Gamma$ of
all ideals of $\Gamma$ is called the \emph{ideal
  completion}\footnote{Viewing a partially ordered set as a category,
  the ideal completion of $\Gamma$ is nothing but the Ind-completion of
  $\Gamma$, that is, the category of functors $\Gamma^\op\to\Sets$
  that are filtered colimits of representable functors.} of
$\Gamma$. This set is partially ordered by inclusion and in fact a
compactly generated complete lattice. The map $\Gamma\to\widehat
\Gamma$ sending $a\in \Gamma$ to $I(a)$ identifies $\Gamma$ with
${\widehat \Gamma}^c$.

\begin{lemma}\label{le:compact}
  Let $\Lambda$ be a compactly generated complete lattice. Then the
  map
  \[\Lambda\lto \widehat{\Lambda^c},\quad a \mapsto I(a)\cap
  \Lambda^c=\{x\in \Lambda\mid x\leq
a\text{ and }x\text{ compact}\},\] is a lattice isomorphism.
\end{lemma}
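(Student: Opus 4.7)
The strategy is to exhibit an explicit inverse and check it is mutually inverse and order-preserving; a bijection of posets that preserves order in both directions between complete lattices is automatically an isomorphism of lattices.

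First I would verify that the prescription $a\mapsto I(a)\cap\Lambda^c$ indeed lands in $\widehat{\Lambda^c}$. The subset is downward closed in $\Lambda^c$ by transitivity of $\leq$, and it is closed under finite suprema since $\Lambda^c$ itself is closed under finite joins and $x,y\leq a$ implies $x\vee y\leq a$. Non-emptiness follows from the fact that the minimum element $0\in\Lambda$ is compact (the empty subset is finite, witnessing the compactness condition vacuously), so $0$ lies in every such set. Thus the map is well-defined.

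Next I would define the candidate inverse $\Phi\col\widehat{\Lambda^c}\to\Lambda$ by $\Phi(I)=\bigvee I$, using completeness of $\Lambda$. That $\Phi$ composed with our map recovers the identity on $\Lambda$ is exactly the compact-generation hypothesis: for every $a\in\Lambda$, one has $a=\bigvee(I(a)\cap\Lambda^c)$. The opposite composition is the crucial step: given an ideal $I\subseteq\Lambda^c$, one must show that $I(\bigvee I)\cap\Lambda^c=I$. The inclusion $\supseteq$ is immediate. For $\subseteq$, take $x\in\Lambda^c$ with $x\leq\bigvee I$; by compactness of $x$ there exist finitely many $b_1,\dots,b_n\in I$ with $x\leq b_1\vee\cdots\vee b_n$, and since $I$ is closed under finite suprema the join $b_1\vee\cdots\vee b_n$ lies in $I$, whence downward closure of $I$ inside $\Lambda^c$ forces $x\in I$. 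This step is the main technical point and is where the interplay between compactness of $x$ and ideal-closure of $I$ is essential.

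Finally, both maps are manifestly order-preserving ($a\leq b$ gives $I(a)\cap\Lambda^c\subseteq I(b)\cap\Lambda^c$, and $I\subseteq J$ gives $\bigvee I\leq\bigvee J$), so they establish an order isomorphism. Because joins and meets in a complete lattice are characterised purely by the order relation, any order isomorphism between complete lattices automatically preserves arbitrary joins and meets, yielding the desired lattice isomorphism $\Lambda\xra{\sim}\widehat{\Lambda^c}$.
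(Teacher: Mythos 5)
Your proposal is correct and takes the same approach as the paper: the paper's entire proof consists of naming the inverse map $I\mapsto\bigvee I$, and you have simply verified in detail the facts it leaves implicit (well-definedness, the two compositions --- with the compactness argument for $I(\bigvee I)\cap\Lambda^c\subseteq I$ being the key point --- and that an order isomorphism of complete lattices is a lattice isomorphism).
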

\begin{proof}
The inverse map sends an ideal $I\in \widehat{\Lambda^c}$ to its supremum in $\Lambda$.
\end{proof}

\subsection*{Coherent frames}

A frame is called \emph{coherent} if it is compactly generated and the
compact elements form a sublattice containing $1$
\cite{Johnstone:1982}. The correspondence from
Proposition~\ref{pr:frames} between frames and topological spaces
identifies the coherent frames with the spaces that are spectral.
Following Hochster \cite{Hochster:1969a}, a topological space is
called \emph{spectral} if it is $T_0$ and quasi-compact, the
quasi-compact open subsets are closed under finite intersections and
form an open basis, and every non-empty irreducible closed subset has
a generic point.

The following characterization of the coherent frames is well-known;
we sketch the proof for the convenience of the reader. 

\begin{proposition}
\label{pr:compact}
Let $\Lambda$ be a  complete lattice. The conditions below are equivalent:
\begin{enumerate}
\item $\Lambda$ is a coherent frame.
\item $\Lambda$ is a frame, has enough points, and the associated
  spectrum is spectral.
\item $\Lambda$ is the ideal completion of a distributive lattice
  having $0$ and $1$.
\item $\Lambda$ is compactly generated, distributive, and the compact
  elements form a sublattice  containing $1$.
\end{enumerate}
\end{proposition}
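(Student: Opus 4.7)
The plan is to establish the equivalences $(3) \Leftrightarrow (4) \Leftrightarrow (1) \Leftrightarrow (2)$, using Lemma~\ref{le:compact} to pass between $\Lambda$ and its sublattice $\Lambda^c$ of compact elements, and Proposition~\ref{pr:frames} to pass between frames and their spectra.

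I would begin with $(1) \Leftrightarrow (4)$, where only the upgrade from binary to infinite distributivity requires real argument. Given $(4)$, a family $\{b_i\}$, and an element $a$, the nontrivial inequality $a \wedge \bigvee_i b_i \le \bigvee_i (a \wedge b_i)$ is reduced, via compact generation of $\Lambda$, to the case in which the left-hand side is replaced by a compact element $c$ lying below it. Compactness of $c$ together with $c \le \bigvee_i b_i$ produces a finite $J$ with $c \le \bigvee_{i \in J} b_i$, and then finite distributivity yields $c \le \bigvee_{i \in J}(a \wedge b_i) \le \bigvee_i (a \wedge b_i)$, as needed. The converse $(1) \Rightarrow (4)$ is immediate, since every frame is distributive.

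Next I would address $(3) \Leftrightarrow (4)$. Lemma~\ref{le:compact} identifies $\Lambda$ with the ideal completion of $\Lambda^c$; under $(4)$ the sublattice $\Lambda^c$ is distributive, contains $1$ by hypothesis, and contains $0$ because the minimum of any complete lattice is trivially compact, so $(3)$ holds. Conversely, if $\Lambda = \widehat{\Gamma}$ for a distributive lattice $\Gamma$ with $0$ and $1$, then a direct verification shows that the compact elements of $\Lambda$ are exactly the principal ideals, forming a sublattice isomorphic to $\Gamma$ and hence meeting the requirements of $(4)$.

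Finally, for $(1) \Leftrightarrow (2)$, I would unpack Stone duality for coherent frames. Assuming $(1)$, the sets $U(a)$ with $a \in \Lambda^c$ form a basis of quasi-compact opens of $\Sp(\Lambda)$ closed under finite intersections, via $U(a) \cap U(b) = U(a \wedge b)$ and closure of $\Lambda^c$ under $\wedge$; quasi-compactness of the whole space $U(1)$ uses that $1 \in \Lambda^c$. The delicate point, and the main obstacle, is establishing that $\Lambda$ has enough points: this requires the Birkhoff prime ideal theorem for distributive lattices, a nontrivial choice principle, to separate distinct elements by frame morphisms to $\mathbb F_2$. Once enough points is in hand, \eqref{eq:adjoint} identifies $\Lambda$ with $\mathcal O(\Sp(\Lambda))$, and spectrality of $\Sp(\Lambda)$ follows from the description above. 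Conversely, if $\Sp(\Lambda)$ is spectral, then $\mathcal O(\Sp(\Lambda))$ is manifestly a coherent frame, its compact elements being the quasi-compact opens, and the same isomorphism transfers coherence back to $\Lambda$.
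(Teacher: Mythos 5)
Your proposal is correct, and the mathematical content at each step coincides with the paper's; the difference is purely organizational. The paper proves the cycle $(1)\Rightarrow(2)\Rightarrow(3)\Rightarrow(4)\Rightarrow(1)$, whereas you prove three biconditionals, which forces you to supply a couple of arguments the cycle avoids (the direct implications $(1)\Rightarrow(4)$, $(4)\Rightarrow(3)$ and $(2)\Rightarrow(1)$); these are all easy, and your use of Lemma~\ref{le:compact} for $(4)\Rightarrow(3)$ and the direct observation that $\mathcal O(X)$ is coherent for spectral $X$ are clean ways to close the loops. Your $(4)\Rightarrow(1)$ argument (reduce to a compact $c$ below $a\wedge\bigvee_i b_i$, extract a finite subfamily, apply finite distributivity) is the same as the paper's, just phrased via finite subsets rather than via directed joins. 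The one place where you are lighter than the paper is $(1)\Rightarrow(2)$: you correctly flag ``enough points'' as the delicate step and outsource it to the prime ideal theorem, while the paper actually carries out the Zorn's lemma argument --- given $b\not\le a$, choose a compact $c\le b$ with $c\not\le a$; a maximal element of $\{x\mid a\le x,\ c\not\le x\}$ exists because $c$ is compact (so chains have upper bounds in the set) and is prime by binary distributivity --- and it also explicitly verifies the generic point condition by noting that $\bigwedge_{p\in X}p$ is prime for an irreducible closed $X$. If you wanted a self-contained write-up you should include those two verifications, but as a sketch your route is sound and essentially equivalent to the paper's.
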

\begin{proof}
(1) $\Rightarrow$ (2): Given elements $a,b\in\Lambda$ with $b\not\le
a$, there exists $c\in\Lambda^c$ such that $c\not\le a$ and $c\le b$.
The set $\{x\in\Lambda\mid a\le x,\,c\not\le x\}$ has a maximal
element (using Zorn's lemma) which is prime. Thus $\Lambda$ is a frame
having enough points.

The sets $U(a)$ with $a\in\Lambda^c$ are precisely the quasi-compact
open subsets of $\Sp(\Lambda)$; they are closed under finite
intersections and $\Sp(\Lambda)$ is of this form since $\Lambda^c$ is
a sublattice of $\Lambda$. Given an irreducible subset
$X\subseteq\Sp(\Lambda)$, the element $\bigwedge_{p \in X}p$ is prime
and therefore a generic point of $X$. Thus $\Sp(\Lambda)$ is spectral.

  (2) $\Rightarrow$ (3): The lattice $\Lambda$ is isomorphic to the
  lattice of open subsets of $\Sp(\Lambda)$, since $\Lambda$ has
  enough points. The fact that $\Sp(\Lambda)$ is spectral means that
  $\Lambda$ is isomorphic to the ideal completion of the lattice of
  quasi-compact open subsets $\Gamma=\mathcal O(\Sp(\Lambda))^c$. The
  required properties of $\Gamma$ are easily checked.

  (3) $\Rightarrow$ (4): Suppose that $\Lambda=\widehat\Gamma$ for
  some distributive lattice $\Gamma$.  The principal ideals are the
  compact elements in $\Lambda$. Therefore $\Lambda$ is compactly
  generated. The maximal element $1\in\Lambda$ is compact since
  $\Gamma$ has a maximal element. The infimum of two compact elements
  is again compact, since $I(a)\wedge I(b)=I(a\wedge
  b)$. Distributivity of $\Lambda$ follows from the distributivity of
  $\Gamma$, using that $I\vee J=\{a\vee b\mid a\in I,\,b\in J\}$.

  (4) $\Rightarrow$ (1): Compact generation of $\Lambda$ implies for
  every $a\in\Lambda$ and every directed subset $B\subseteq\Lambda$
  \[a\wedge(\bigvee_{b\in B}b)=\bigvee_{b\in B}(a\wedge b).\] Using
  that $\Lambda$ is distributive, this identity extends to arbitrary
  subsets $B\subseteq\Lambda$, since
\[\bigvee_{b\in B}b=\bigvee_{B'\subseteq B \atop \text{finite}}
\bigvee_{b\in B'}b\] 
and the finite subsets of $B$ form a directed set. Thus $\Lambda$ is a coherent frame.
\end{proof}

Any compactly generated lattice is determined by its subset of compact
elements, by Lemma~\ref{le:compact}. Combining this fact with
Propositions~\ref{pr:frames} and \ref{pr:compact} yields the following
correspondence, which is another incarnation of Stone duality.

\begin{corollary}\label{co:compact_frames}
  \pushQED{\qed} Taking a distributive lattice to the spectrum of its
  ideal completion induces a bijective correspondence:
\begin{equation*}
\left\{
\begin{gathered}
  \text{distributive lattices}\\ \text{with $0$ and $1$}
\end{gathered}\;
\right\}
\xymatrix@C=3pc{ \ar@{<->}[r]^-{\scriptscriptstyle{1-1}} &} \left\{
\begin{gathered}
  \text{spectral}\\ \text{topological spaces}
\end{gathered}\;
\right\}
\end{equation*}
The inverse takes a spectral space to its lattice of quasi-compact open subsets. \qedhere
\end{corollary}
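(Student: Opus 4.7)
The plan is to assemble the corollary from three earlier ingredients: Stone duality for frames with enough points (Proposition~\ref{pr:frames}), the characterization of coherent frames (Proposition~\ref{pr:compact}), and the ideal-completion description of compactly generated lattices (Lemma~\ref{le:compact}). Concretely, I would propose the two assignments
\[
\Gamma \;\longmapsto\; \Sp(\widehat\Gamma)
\qquad\text{and}\qquad
X \;\longmapsto\; \mathcal O(X)^c
\]
and show they are mutually inverse.

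First I would check that the assignments are well defined. For a distributive lattice $\Gamma$ with $0$ and $1$, Proposition~\ref{pr:compact}, via the chain of implications $(3)\Rightarrow(1)\Rightarrow(2)$, shows that $\widehat\Gamma$ is a coherent frame with enough points whose associated spectrum $\Sp(\widehat\Gamma)$ is a spectral space. Conversely, for a spectral space $X$, by the very definition of spectrality, the quasi-compact open subsets form a sublattice of $\mathcal O(X)$ containing $\varnothing$ and $X$, so $\mathcal O(X)^c$ is a distributive lattice with $0$ and $1$.

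Next I would verify that the two constructions are mutually inverse. Starting from $\Gamma$, the coherent frame $\widehat\Gamma$ has enough points, so the adjunction morphism \eqref{eq:adjoint} is a frame isomorphism $\widehat\Gamma \xra{\sim} \mathcal O(\Sp(\widehat\Gamma))$ by Proposition~\ref{pr:frames}. This isomorphism preserves compact elements, and the universal property of the ideal completion identifies $\Gamma$ with $\widehat\Gamma^c$ via principal ideals; hence $\Gamma \cong \mathcal O(\Sp(\widehat\Gamma))^c$. Starting from a spectral space $X$, the frame $\mathcal O(X)$ satisfies the hypotheses of Proposition~\ref{pr:compact}(4) and is therefore a coherent frame, so Lemma~\ref{le:compact} produces an isomorphism $\mathcal O(X) \cong \widehat{\mathcal O(X)^c}$. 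Since any spectral space is sober --- the $T_0$ axiom forces generic points to be unique --- Proposition~\ref{pr:frames} supplies a homeomorphism $X \xra{\sim} \Sp(\mathcal O(X)) \cong \Sp(\widehat{\mathcal O(X)^c})$.

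The only point requiring any care --- and it is almost tautological here --- is that the isomorphism $\widehat\Gamma \cong \mathcal O(\Sp(\widehat\Gamma))$ matches up the two notions of \emph{compact element}. This is automatic because compactness in a lattice is expressed solely in terms of arbitrary suprema, so it is preserved by any frame isomorphism. Once this is noted, the corollary is simply the unwinding of the two Stone-type equivalences already at hand.
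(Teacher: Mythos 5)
Your proposal is correct and follows exactly the route the paper intends: the corollary is stated with a \verb|\qed| because it is just the combination of Lemma~\ref{le:compact}, Proposition~\ref{pr:frames}, and Proposition~\ref{pr:compact}, which is precisely what you assemble. Your explicit verification that the adjunction isomorphism $\widehat\Gamma\cong\mathcal O(\Sp(\widehat\Gamma))$ matches compact elements with quasi-compact opens, and that spectral spaces are sober, fills in the (routine) details the paper leaves implicit.
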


\subsection*{Hochster duality}

Given a spectral topological space $X$, one defines its \emph{Hochster
  dual} space $X^*$ by taking the same points and turning the complements of
quasi-compact open sets of $X$ into basic open sets of $X^*$. Hochster
proved that $X^*$ is spectral and that $(X^*)^*=X$; see
\cite[Proposition~8]{Hochster:1969a}.

Corollary~\ref{co:compact_frames} explains this
duality: If a spectral space $X$ corresponds to a distributive lattice
$\Lambda$, then $X^*$ corresponds to the opposite lattice
$\Lambda^\op$.

\subsection*{The lattice of thick subcategories}
Let now $\sfT$ be a compactly generated tensor triangulated category,
as in Section~\ref{se:stratification}. We establish the lattice
theoretic properties of $\Th(\sfT^c)$, and use them to define a notion
of support for objects in $\sfT^c$.  The following map is our basic
tool:
\[
f\col\Th(\sfT^c)\lto\Acy(\sfT),\qquad\sfC
\mapsto\bigvee_{X\in\sfC}\acy(X)=\acy(\coprod_{X\in\sfC}X)\,.
\]
Observe that $f(\sfC)=\sfC^\perp$ since the objects of $\sfC$ are
strongly dualizing.

\begin{lemma}
\label{le:T^c}
The map $f$ is injective and its image is contained in $\Dl(\sfT)$. Moreover,
\[
f(\sfC\cap\sfD)=f(\sfC)\wedge f(\sfD)\quad\text{and}\quad f(\bigvee_i\sfC_i)=\bigvee_i f(\sfC_i)
\] 
for every set of elements $\sfC,\sfD,\sfC_i$ in $\Th(\sfT^c)$.
\end{lemma}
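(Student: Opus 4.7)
The whole proof hinges on the observation flagged immediately after the statement: $f(\sfC)=\sfC^\perp$ for every tensor closed thick $\sfC\subseteq\sfT^c$. I would first make this identification precise. One direction uses Proposition~\ref{prop:compact-local}: if $X\otimes Y=0$ then $X^\vee\otimes Y=0$, so $\Hom_\sfT(X,Y)\cong\Hom_\sfT(\one,X^\vee\otimes Y)=0$. For the converse, the essential extra ingredient is that $X\in\sfC$ forces $X^\vee\in\sfC$: tensor closure gives $X\otimes X^\vee\in\sfC$, and since $X^\vee$ is a retract of $X^\vee\otimes X\otimes X^\vee\cong X\otimes X^\vee\otimes X^\vee$, thickness then puts $X^\vee$ in $\sfC$. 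Hence for $Y\in\sfC^\perp$, $X\in\sfC$, and any compact $Z$, the object $Z\otimes X^\vee$ lies in $\sfC$, so $\Hom_\sfT(Z,X\otimes Y)\cong\Hom_\sfT(Z\otimes X^\vee,Y)=0$; compact generation forces $X\otimes Y=0$.

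With $f(\sfC)=\sfC^\perp$ in hand, injectivity is immediate: $\sfC^\perp=\sfD^\perp$ gives $\tloc(\sfC)={}^\perp(\sfC^\perp)={}^\perp(\sfD^\perp)=\tloc(\sfD)$, and Neeman's identity $\tloc(\sfC)\cap\sfT^c=\sfC$ for thick $\sfC\subseteq\sfT^c$ recovers $\sfC=\sfD$. Similarly, the join formula reduces to $f(\bigvee_i\sfC_i)=(\bigcup_i\sfC_i)^\perp=\bigcap_if(\sfC_i)=\bigvee_if(\sfC_i)$, using that suprema in $\Acy(\sfT)$ correspond to intersections of acyclic classes.

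For $f(\sfC)\in\Dl(\sfT)$ I would show that $W=\coprod_{X\in\sfC}X$ is Bousfield idempotent. The inclusion $\acy(W)\subseteq\acy(W\otimes W)$ is automatic; for the reverse, if $W\otimes W\otimes Y=0$ then $X\otimes X^\vee\otimes Y=0$ for every $X\in\sfC$ (using $X^\vee\in\sfC$), and since $X$ is a retract of $X\otimes X^\vee\otimes X$ it follows that $X\otimes Y=0$, so $Y\in\acy(W)$. For the meet formula, order-preservation of $f$ (retracts on coproducts) gives $f(\sfC\cap\sfD)\le f(\sfC)\wedge f(\sfD)$; the opposite inequality uses Proposition~\ref{pr:D(T)} to write $f(\sfC)\wedge f(\sfD)=\acy(W\otimes V)$ with $V=\coprod_{Y\in\sfD}Y$, and observes that each summand $X\otimes Y$ of $W\otimes V$ belongs to $\sfC\cap\sfD$ since both subcategories are tensor closed.

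The one real obstacle is the auxiliary fact $X^\vee\in\sfC$ whenever $X\in\sfC$, which relies on the interplay of tensor closure, strong dualizability, and thickness. Once this is installed, both the identification $f(\sfC)=\sfC^\perp$ and the idempotency of $W$ follow, and all four assertions reduce to formal lattice-theoretic manipulations with orthogonal classes.
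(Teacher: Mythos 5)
Your proof is correct and follows essentially the same route as the paper: the paper likewise deduces Bousfield idempotency of compact objects from the retract $X\mid X\otimes X^{\vee}\otimes X$, computes the meet via Proposition~\ref{pr:D(T)} together with tensor closure of $\sfC\cap\sfD$, and proves injectivity from $f(\sfC)=\sfC^{\perp}=\tloc(\sfC)^{\perp}$ combined with $\tloc(\sfC)={}^{\perp}(\tloc(\sfC)^{\perp})$ and $\sfC=\tloc(\sfC)\cap\sfT^{c}$. The only difference is that you supply the (correct) verification of $f(\sfC)=\sfC^{\perp}$, including the useful auxiliary fact $X\in\sfC\Rightarrow X^{\vee}\in\sfC$, which the paper merely asserts in the sentence preceding the lemma.
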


\begin{proof}
A compact object $X$ is strongly dualizable, and therefore a retract of $X\otimes X^\vee\otimes X$; see \cite[Proposition~III.1.2]{Lewis:1986a}. Hence $X$ is Bousfield idempotent. Using the properties of $\Dl(\sfT)$ from Proposition~\ref{pr:D(T)}, it follows that $f(\sfC)$ belongs to $\Dl(\sfT)$ for all $\sfC$ in $\Th(\sfT^c)$.

Next we compute $ f(\sfC\cap\sfD)$: 
\begin{align*}
  f(\sfC\cap\sfD)&\le f(\sfC)\wedge f(\sfD)=
  \big(\bigvee_{X\in\sfC}\acy(X)\big)\wedge
  \big(\bigvee_{Y\in\sfD}\acy(Y)\big)\\&=\bigvee_{(X,Y)\in\sfC\times\sfD}
  \big(\acy(X)\wedge\acy(Y)\big)=\bigvee_{(X,Y)\in\sfC\times\sfD}
  \acy(X\otimes Y)\\&\le\bigvee_{Z\in\sfC\cap\sfD}\acy(Z)=f(\sfC\cap\sfD).
\end{align*}

The identity for $f(\bigvee_i\sfC_i)$ is clear.

It remains to verify the injectivity of $f$. Since $\sfC$ consists of compact objects
\[
\tloc(\sfC)={^\perp(\tloc(\sfC)^\perp)}\quad\text{and}\quad \sfC=\tloc(\sfC)\cap\sfT^c\,. 
\]
Indeed, both equalities are well-known; the first one can be easily deduced from Lemma~\ref{le:conservative}; for the second, see \cite[Lemma 2.2]{Neeman:1992b}. Since $f(\sfC)=\sfC^\perp=\tloc(\sfC)^\perp$, it follows that $\sfC={^\perp f(\sfC)}\cap\sfT^c$, whence that $f$ is injective.
\end{proof}

Let $\sfC\subseteq\sfT^c$ be a tensor closed thick subcategory and
$\tloc(\sfC)$ the localizing subcategory generated by $\sfC$.  Recall
from Lemma~\ref{le:conservative} that there exists a localization
functor $L_\sfC\col\sfT\to\sfT$ with kernel $\tloc(\sfC)$, and for
each object $X$ in $\sfT$ an exact triangle \[\gam_\sfC X\to X\to
L_\sfC X\to\quad\text{with}\quad \gam_\sfC X\cong\gam_\sfC\one\otimes
X \quad\text{and}\quad L_\sfC X\cong L_\sfC\one\otimes X;\] see also
\cite[Theorem~3.3.3]{Hovey/Palmieri/Strickland:1997a}.  In particular,
$\tloc(\sfC)= \tloc(\gam_\sfC\one)$. This leads to the following
alternative description of the map $f\col\Th(\sfT^c)\to\Acy(\sfT)$.

\begin{lemma} 
Let $X\in\sfT$ such that $\tloc(\sfC)=\tloc(X)$. Then
$f(\sfC)=\acy(X)$. In particular, $f(\sfC)= \acy(\gam_\sfC\one)$.\qed
\end{lemma}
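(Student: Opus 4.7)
The plan is to reduce everything to Lemma~\ref{lem:supp}, which already says that the Bousfield class of an object depends only on the tensor-localizing subcategory it generates. Explicitly, I will introduce
\[
Y := \coprod_{X' \in \sfC} X',
\]
so that by the very definition of $f$,
\[
f(\sfC) = \bigvee_{X' \in \sfC} \acy(X') = \acy(Y).
\]
The first step after this is to verify $\tloc(Y) = \tloc(\sfC)$, which is immediate: $Y$ lies in $\tloc(\sfC)$ since each summand does, and conversely each $X' \in \sfC$ is a direct summand of $Y$, hence belongs to $\tloc(Y)$.

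Combining this identification with the hypothesis $\tloc(\sfC) = \tloc(X)$ yields $\tloc(Y) = \tloc(X)$. At this point I apply the first implication of Lemma~\ref{lem:supp} to each of the inclusions $\tloc(Y) \subseteq \tloc(X)$ and $\tloc(X) \subseteq \tloc(Y)$, obtaining $\acy(Y) = \acy(X)$ in $\Acy(\sfT)$, and therefore $f(\sfC) = \acy(X)$. For the ``in particular'' clause, specialize to $X = \gam_\sfC \one$, which is legitimate because $\tloc(\sfC) = \tloc(\gam_\sfC \one)$ was recorded in the discussion immediately preceding the lemma. I expect no real obstacle here: the entire argument is a chain of tautological identifications together with the monotonicity of $\acy(-)$ with respect to $\tloc(-)$ already supplied by Lemma~\ref{lem:supp}, whose proof in turn only uses that the functor $W \otimes -$ is exact and coproduct-preserving for every $W \in \sfT$.
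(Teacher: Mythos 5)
Your argument is correct and is exactly the intended one: the paper marks this lemma as immediate, and the underlying reason is precisely what you spell out, namely that $f(\sfC)=\acy(\coprod_{X'\in\sfC}X')$ by the supremum formula and that $\acy(-)$ depends only on $\tloc(-)$ via the first implication of Lemma~\ref{lem:supp} (which, as you note, needs no ring action). The reduction of the ``in particular'' clause to the identity $\tloc(\sfC)=\tloc(\gam_\sfC\one)$ recorded just before the lemma is also as intended.
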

A consequence is the fact that the elements of the form $f(\sfC)$ have
a complement in $\Acy(\sfT)$. More precisely,
\[\acy(\gam_\sfC\one)\wedge \acy(L_\sfC\one)= 0\quad\text{and}\quad
\acy(\gam_\sfC\one)\vee \acy(L_\sfC\one)= 1.\]

The following result establishes the basic properties of the lattice
$\Th(\sfT^c)$.  The ambient category $\sfT$ is used in our proof, but
it is not essential; see \cite{Buan/Krause/Solberg:2007a} for more
general results.  In what follows for any object $X$ in $\sfT$, we
write $\Thick(X)$ for the smallest tensor closed thick subcategory of
$\sfT$ containing $X$.

\begin{theorem}
\label{th:spec}
  Let $\sfT$ be a compactly generated tensor triangulated
  category. Then the lattice $\Th(\sfT^c)$ of tensor closed thick
  subcategories of $\sfT^c$ is a coherent frame. It is isomorphic
  to the sublattice of the Bousfield lattice $\Acy(\sfT)$ consisting of the
  elements $\bigvee_{X\in\sfC}\acy(X)$ with $\sfC\subseteq\sfT^c$.
\end{theorem}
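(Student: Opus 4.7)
The plan is to verify condition (4) of Proposition~\ref{pr:compact} for $\Th(\sfT^c)$, then transport the conclusion through the injective lattice morphism $f\colon\Th(\sfT^c)\to\Acy(\sfT)$ of Lemma~\ref{le:T^c}. A key input is the lemma immediately preceding the theorem, which gives $f(\Thick(X))=\acy(X)$ for every $X\in\sfT^c$, where $\Thick(X)$ now denotes the tensor closed thick subcategory of $\sfT^c$ generated by $X$.

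First I would identify the lattice-compact elements of $\Th(\sfT^c)$ as the subcategories $\Thick(X)$ with $X\in\sfT^c$. Every $\sfC\in\Th(\sfT^c)$ equals $\bigvee_{X\in\sfC}\Thick(X)$, and $\Thick(X)$ is lattice-compact because the tensor closed thick closure of a union is the directed union of the closures of its finite subsets. In particular, the top element $1=\sfT^c$ is compact, since $\Thick(\one)=\sfT^c$: the identity $Y=\one\otimes Y$ forces every $Y\in\sfT^c$ into $\Thick(\one)$. Conversely, any lattice-compact $\sfC$ must equal $\Thick(X_1\oplus\cdots\oplus X_n)$ for finitely many $X_i\in\sfC$.

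For closure of the compact elements under $\wedge$, the nontrivial point is $\Thick(X)\cap\Thick(Y)=\Thick(X\otimes Y)$. Since compact objects are strongly dualizable and hence Bousfield idempotent, Proposition~\ref{pr:D(T)} yields $\acy(X)\wedge\acy(Y)=\acy(X\otimes Y)$; applying $f$ (which preserves finite infima by Lemma~\ref{le:T^c}) gives
\[
f(\Thick(X)\cap\Thick(Y))=\acy(X)\wedge\acy(Y)=\acy(X\otimes Y)=f(\Thick(X\otimes Y)),
\]
and injectivity of $f$ forces the identity. Closure under $\vee$ is immediate from $\Thick(X)\vee\Thick(Y)=\Thick(X\oplus Y)$. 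Distributivity of $\Th(\sfT^c)$ transfers from the frame $\Dl(\sfT)$ along the same embedding, which preserves arbitrary suprema and finite infima. Proposition~\ref{pr:compact}(4) then delivers that $\Th(\sfT^c)$ is a coherent frame.

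For the second assertion, I would show the image of $f$ is exactly the sublattice $\{\bigvee_{X\in\sfC}\acy(X):\sfC\subseteq\sfT^c\}$: given $\sfC\subseteq\sfT^c$, one has $\bigvee_{X\in\sfC}\acy(X)=\acy(\coprod_{X\in\sfC}X)$, whose acyclics consist of those $Z$ with $Z\otimes X=0$ for all $X\in\sfC$, and this vanishing persists for all $Y$ in the tensor closed thick subcategory $\Thick(\sfC)$ of $\sfT^c$ generated by $\sfC$; hence $\bigvee_{X\in\sfC}\acy(X)=f(\Thick(\sfC))$. Combined with the injectivity and lattice-preserving properties of $f$, this gives the desired isomorphism. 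The hardest step, I anticipate, is the infimum identity $\Thick(X)\cap\Thick(Y)=\Thick(X\otimes Y)$: a direct object-level argument inside $\sfT^c$ looks awkward, but it drops out cleanly once the Bousfield class calculation is combined with injectivity of $f$.
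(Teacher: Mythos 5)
Your proposal is correct and follows essentially the same route as the paper: verify condition (4) of Proposition~\ref{pr:compact} by identifying the compact elements as the subcategories $\Thick(X)$ with $X\in\sfT^c$, prove $\Thick(X)\cap\Thick(Y)=\Thick(X\otimes Y)$ by combining $\acy(X)\wedge\acy(Y)=\acy(X\otimes Y)$ with the injectivity of $f$, and transfer distributivity from $\Dl(\sfT)$ along the embedding of Lemma~\ref{le:T^c}. The only additions beyond the paper's argument (the explicit check that the image of $f$ on thick subcategories coincides with the set of elements $\bigvee_{X\in\sfC}\acy(X)$ for arbitrary $\sfC\subseteq\sfT^c$, and the converse description of compact elements) are correct and harmless.
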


\begin{proof}
Lemma~\ref{le:T^c} provides the embedding of $\Th(\sfT^c)$ into $\Acy(\sfT)$.  In order to show that $\Th(\sfT^c)$ is a coherent frame, it suffices to verify that it is compactly generated, distributive, and that the
  compact elements form a sublattice; see Proposition~\ref{pr:compact}.

  For any object $X$ in $\sfT^c$, the category $\Thick(X)$ is a
  compact element in $\Th(\sfT^c)$. This is clear, since
  $\Thick(X)\le\bigvee_i\sfC_i$ if and only if $X\in\bigvee_i\sfC_i$,
  and keeping in mind the explicit construction of $\bigvee_i\sfC_i$
  from $\bigcup_i\sfC_i$ by taking cones of morphisms, suspensions
  etc.  Also, $\sfC=\bigvee_{X\in\sfC}\Thick(X)$ for each $\sfC$ in
  $\Th(\sfT^c)$. Thus $\Th(\sfT^c)$ is compactly generated. The
  element $1\in\Th(\sfT^c)$ is compact since
  $\sfT^c=\Thick(\one)$. Given two objects $X,Y$ in $\sfT^c$, we
  have 
\[
\Thick(X)\cap\Thick(Y)=\Thick(X\otimes Y),
\] 
since there are equalities
 \begin{align*}
   f(\Thick(X)\cap\Thick(Y))&=f(\Thick(X))\wedge
   f(\Thick(Y))\\ &=\acy(X)\wedge \acy(Y)\\& = \acy(X\otimes
   Y)\\ &=f(\Thick(X\otimes Y)).
\end{align*}
Thus the compact elements form a sublattice.  The distributivity of
$\Th(\sfT^c)$ follows from the distributivity of $\Dl(\sfT)$, using
Lemma~\ref{le:T^c}.
\end{proof}

\begin{example}\label{ex:thick}
  Let $\sfT$ be a compactly generated tensor triangulated category
  stratified by the action of a graded-commutative noetherian ring
  $R$. Suppose also that the graded endomorphism ring of each compact
  object is finitely generated over $R$.  Consider the set
  $\supp_R(\sfT)$ endowed with the Hochster dual of the Zariski
  topology; thus a subset of $\supp_R(\sfT)$ is open if it is
  specialization closed.  Then the map sending $\sfC$ to
  $\bigcup_{X\in\sfC}\supp_R(X)$ induces an isomorphism
  $\Th(\sfT^c)\xra{\sim}\mathcal O(\supp_R(\sfT))$; see
  \cite[Theorem~6.1]{\bik:2009a}.  This isomorphism induces a
  homeomorphism $\supp_R(\sfT)\xra{\sim}\Sp(\Th(\sfT^c))$; it sends
  $\fp$ in $\supp_R(\sfT)$ to
  $\{X\in\sfT^c\mid\fp\not\in\supp_R(X)\}$.
\end{example}

\subsection*{Support}
We write $\Sp(\sfT^c)=\Sp(\Th(\sfT^c))$ and for each object $X$ in
$\sfT^c$ let
\begin{align*}
  \supp_{\sfT^c}(X)  &=\{\sfP\in\Sp(\sfT^c)\mid X\not\in \sfP\}\\
&=\{\sfP\in\Sp(\sfT^c)\mid
  \Thick(X)\not\subseteq \sfP\}.
\end{align*}
Note that $ \supp_{\sfT^c}(X)$ is an open subset of $\Sp(\sfT^c)$
which is quasi-compact.  For a tensor closed thick subcategory
$\sfC\subseteq\sfT^c$, we write
\begin{align*}
  \supp_{\sfT^c}(\sfC)&=\bigcup_{X\in\sfC}\supp_{\sfT^c}(X)\\
  &=\{\sfP\in\Sp(\sfT^c)\mid\sfC\not\subseteq
  \sfP\}.
\end{align*}

Using this notation, Theorem~\ref{th:spec} has the following
consequence.

\begin{corollary}\label{co:spec}
  The spectrum $\Sp(\sfT^c)$ is a spectral topological space.  The map
  that assigns to each object $X\in\sfT^c$ its support
  $\supp_{\sfT^c}(X)$ induces an inclusion preserving bijection
  between the set of tensor closed thick subcategories of $\sfT^c$ and
  the set of open subsets of $\Sp(\sfT^c)$.
\end{corollary}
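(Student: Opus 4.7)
The plan is to deduce both assertions of Corollary~\ref{co:spec} directly from Theorem~\ref{th:spec} combined with the general lattice-theoretic machinery assembled earlier in the section, particularly Proposition~\ref{pr:compact} and the adjunction morphism \eqref{eq:adjoint}. There is no genuinely new computation to perform; the task is simply to assemble the pieces and unwind definitions.

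For the first assertion, I would invoke Theorem~\ref{th:spec}, which asserts that $\Th(\sfT^c)$ is a coherent frame. The implication $(1) \Rightarrow (2)$ of Proposition~\ref{pr:compact} then yields that $\Th(\sfT^c)$ has enough points and that its spectrum $\Sp(\sfT^c) = \Sp(\Th(\sfT^c))$ is a spectral topological space.

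For the bijection, the fact that $\Th(\sfT^c)$ has enough points means that the adjunction morphism \eqref{eq:adjoint} gives a lattice isomorphism
\[
\Th(\sfT^c) \xrightarrow{\sim} \mathcal O(\Sp(\sfT^c)), \qquad \sfC \mapsto U(\sfC).
\]
By definition $U(\sfC) = \{\sfP \in \Sp(\sfT^c) \mid \sfC \not\le \sfP\}$, and since $\sfP$ is prime this coincides with $\{\sfP \mid \sfC \not\subseteq \sfP\}$. It remains to identify this with $\bigcup_{X \in \sfC} \supp_{\sfT^c}(X)$. This follows from the decomposition $\sfC = \bigvee_{X\in\sfC}\Thick(X)$ noted in the proof of Theorem~\ref{th:spec}, together with the frame identity $U(\bigvee_i a_i) = \bigcup_i U(a_i)$, giving
\[
U(\sfC) = \bigcup_{X\in\sfC} U(\Thick(X)) = \bigcup_{X\in\sfC} \supp_{\sfT^c}(X) = \supp_{\sfT^c}(\sfC).
\]
That the bijection is inclusion preserving is built into the isomorphism of lattices.

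Since both parts are immediate consequences of previously established results, there is no substantial obstacle; the only thing to be careful about is verifying that the set-theoretic description of $U(\sfC)$ from the Stone-duality framework really agrees with the concretely defined support $\supp_{\sfT^c}(\sfC)$, which is what the last display handles.
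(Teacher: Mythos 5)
Your proposal is correct and follows essentially the same route as the paper: spectrality comes from Theorem~\ref{th:spec} together with Proposition~\ref{pr:compact}, and the bijection is the adjunction morphism \eqref{eq:adjoint}, which is an isomorphism because the coherent frame $\Th(\sfT^c)$ has enough points. The only difference is that you spell out the identification $U(\sfC)=\supp_{\sfT^c}(\sfC)$, which the paper builds into the definition of $\supp_{\sfT^c}(\sfC)$ just before the corollary (and note that $\sfC\le\sfP$ simply means $\sfC\subseteq\sfP$ since the order is inclusion, so no primality is needed there).
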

\begin{proof}
  The spectrum of a coherent frame is spectral by
  Proposition~\ref{pr:compact}. The map sending a tensor closed thick
  subcategory to an open subset is precisely the map
  \eqref{eq:adjoint} from Stone duality, which is bijective by
  Proposition~\ref{pr:frames}.
\end{proof}

The following result connects the support in $\sfT^c$ with the one
defined in $\sfT$ in terms of the Bousfield lattice of $\sfT$.

\begin{proposition}\label{pr:compact_support}
  The map $\supp_{\sfT^c}(-)$ has the following properties:
\begin{enumerate}
\item The map $f\col\Th(\sfT^c)\to \Dl(\sfT)$ induces a continuous
  map $\Sp(f)\col\Sp(\sfT)\to\Sp(\sfT^c)$ such that for each object
  $X$ in $\sfT^c$ there is an equality
\[
\supp_\sfT(X)=\Sp(f)^{-1}(\supp_{\sfT^c}(X))\,.
\]
\item $\acy(X)\leq \acy(Y) \iff \supp_{\sfT^c}(X)\subseteq \supp_{\sfT^c}(Y)$, for all $X,Y$ in $\sfT^c$.
\item $\supp_{\sfT^c}(X\otimes Y)=\supp_{\sfT^c}(X)\cap \supp_{\sfT^c}(Y)$, for all $X,Y$ in $\sfT^c$.
\end{enumerate}
\end{proposition}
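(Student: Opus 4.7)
The three parts all follow by transporting structure along the frame morphism $f\col\Th(\sfT^c)\to\Dl(\sfT)$ supplied by Lemma~\ref{le:T^c}, together with the general Stone-duality machinery of Section~\ref{se:Stone}.

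\textbf{Part (1).} By Lemma~\ref{le:T^c}, $f$ preserves finite meets and arbitrary suprema, hence is a morphism of frames. Applying the contravariant functor $\Sp$ (see the discussion around \eqref{eq:open}--\eqref{eq:duality}) yields the continuous map $\Sp(f)\col\Sp(\sfT)\to\Sp(\sfT^c)$ together with the naturality identity $U(f(a))=\Sp(f)^{-1}(U(a))$ for every $a\in\Th(\sfT^c)$. For a compact object $X$, the lemma preceding Theorem~\ref{th:spec}, applied to $\sfC=\Thick(X)$, gives $f(\Thick(X))=\acy(X)$. Since every compact object is Bousfield idempotent (Lemma~\ref{le:T^c}), the definition of $\supp_\sfT(-)$ unwinds to $\supp_\sfT(X)=U(\acy(X))$ inside $\Sp(\sfT)$, while by definition $\supp_{\sfT^c}(X)=U(\Thick(X))$ inside $\Sp(\sfT^c)$. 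Feeding $a=\Thick(X)$ into the naturality identity then produces the desired equality.

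\textbf{Part (2).} The injectivity and frame morphism property of $f$ (Lemma~\ref{le:T^c}) make it an order embedding, so for compact $X,Y$,
\[
\acy(X)\le\acy(Y)\iff f(\Thick(X))\le f(\Thick(Y))\iff \Thick(X)\subseteq\Thick(Y).
\]
On the other hand, Corollary~\ref{co:spec} asserts that the assignment $\sfC\mapsto\supp_{\sfT^c}(\sfC)$ is an inclusion-preserving bijection between $\Th(\sfT^c)$ and the open subsets of $\Sp(\sfT^c)$; applied to $\sfC=\Thick(X),\Thick(Y)$, this gives
\[
\Thick(X)\subseteq\Thick(Y)\iff\supp_{\sfT^c}(X)\subseteq\supp_{\sfT^c}(Y).
\]
Concatenating the two equivalences yields~(2).

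\textbf{Part (3).} From the proof of Theorem~\ref{th:spec} we have the key identity $\Thick(X)\cap\Thick(Y)=\Thick(X\otimes Y)$ for compact $X,Y$. The bijection of Corollary~\ref{co:spec} is a lattice isomorphism, so it converts the meet $\cap$ in $\Th(\sfT^c)$ into ordinary intersection of open subsets of $\Sp(\sfT^c)$. Thus
\[
\supp_{\sfT^c}(X\otimes Y)=\supp_{\sfT^c}(\Thick(X)\cap\Thick(Y))=\supp_{\sfT^c}(X)\cap\supp_{\sfT^c}(Y).
\]
(Alternatively, the reverse inclusion in~(3) follows directly from the primeness of points $\sfP\in\Sp(\sfT^c)$.)

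The only substantive point is the verification that the definitions of $\supp_\sfT(-)$ and $\supp_{\sfT^c}(-)$ really agree with the basic opens $U(\acy(X))$ and $U(\Thick(X))$ in their respective spectra, and that $f$ sends $\Thick(X)$ to $\acy(X)$; once these identifications are in place, everything reduces to a formal consequence of Stone duality and Lemma~\ref{le:T^c}.
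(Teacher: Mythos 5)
Your proof is correct and follows essentially the same route as the paper: all three parts are formal consequences of Lemma~\ref{le:T^c}, the identification $f(\Thick(X))=\acy(X)$, and Stone duality via \eqref{eq:open} and Theorem~\ref{th:spec}. The only cosmetic difference is in part (3), where the paper deduces the equality from part (2) together with $\acy(X)\wedge\acy(Y)=\acy(X\otimes Y)$ (Proposition~\ref{pr:D(T)}), while you route it through $\Thick(X)\cap\Thick(Y)=\Thick(X\otimes Y)$ and meet-preservation of $U(-)$; these are equivalent arguments.
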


\begin{proof}
  We apply Lemma~\ref{le:T^c} which lists the properties of $f$.

(1) The map $f$ is a morphism of frames and yields therefore a continuous map between the associated spectra. In particular, the identity for $\supp_\sfT(X)$ follows from equation \eqref{eq:open}.

(2) Given  objects $X,Y$ in $\sfT^c$, we have
\[
\acy(X)\leq \acy(Y) \iff\Thick(X)\subseteq\Thick(Y) \iff \supp_{\sfT^c}(X)\subseteq \supp_{\sfT^c}(Y)\,.
\] 
The first equivalence follows from Lemma~\ref{le:T^c}, while the second is a consequence of the fact that the frame $\Th(\sfT^c)$ has enough points, by Theorem~\ref{th:spec}.

(3) This follows from (2), since $\acy(X)\wedge\acy(Y)=\acy(X\otimes Y)$, by Proposition~\ref{pr:D(T)}.
\end{proof}

\begin{corollary}\label{co:cap}
For any pair of tensor closed thick subcategories $\sfC,\sfD$ of $\sfT^c$,
\[
\sfC\cap\sfD=\Thick(\{X\otimes Y\mid X\in\sfC,\,Y\in\sfD\})\,.
\] 
Therefore a tensor closed thick subcategory $\sfP\subsetneq\sfT^c$ is
prime in $\Th(\sfT^c)$ if and only if $X\otimes
Y\in\sfP$ implies $X\in\sfP$ or $Y\in\sfP$, for all objects $X,Y$ in
$\sfT^c$.
\end{corollary}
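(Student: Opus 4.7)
The plan is to reduce the first assertion to a computation that has already been performed in the proof of Theorem~\ref{th:spec}, using the injectivity of the map $f\col\Th(\sfT^c)\to\Acy(\sfT)$ from Lemma~\ref{le:T^c}. Set $\sfE=\Thick(\{X\otimes Y\mid X\in\sfC,\,Y\in\sfD\})$. The inclusion $\sfE\subseteq\sfC\cap\sfD$ is immediate: since $\sfC$ and $\sfD$ are tensor closed and $X\otimes Y$ is compact, each such product lies in $\sfC\cap\sfD$, and the latter is a tensor closed thick subcategory.

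For the reverse inclusion, I would compute $f$ on both sides. By Lemma~\ref{le:T^c} and the distributivity of $\Dl(\sfT)$ in Proposition~\ref{pr:D(T)},
\[
f(\sfC\cap\sfD)=f(\sfC)\wedge f(\sfD)=\Big(\bigvee_{X\in\sfC}\acy(X)\Big)\wedge\Big(\bigvee_{Y\in\sfD}\acy(Y)\Big) =\bigvee_{(X,Y)\in\sfC\times\sfD}\acy(X\otimes Y)=f(\sfE).
\]
(This is the same manipulation used in the proof of Theorem~\ref{th:spec} for principal thick subcategories $\Thick(X)$ and $\Thick(Y)$.) Since $f$ is injective by Lemma~\ref{le:T^c}, we conclude $\sfC\cap\sfD=\sfE$.

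The second assertion follows formally. By definition, $\sfP\subsetneq\sfT^c$ is prime in $\Th(\sfT^c)$ if and only if $\sfC\cap\sfD\subseteq\sfP$ implies $\sfC\subseteq\sfP$ or $\sfD\subseteq\sfP$. For the ``only if'' direction, apply this criterion to $\sfC=\Thick(X)$ and $\sfD=\Thick(Y)$: if $X\otimes Y\in\sfP$ then $\Thick(X)\cap\Thick(Y)=\Thick(X\otimes Y)\subseteq\sfP$ by the first part, so $X\in\sfP$ or $Y\in\sfP$. Conversely, assume the tensor condition on objects and suppose $\sfC\cap\sfD\subseteq\sfP$ with $\sfC\not\subseteq\sfP$, so pick $X\in\sfC\setminus\sfP$. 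For every $Y\in\sfD$ the object $X\otimes Y$ lies in $\sfC\cap\sfD\subseteq\sfP$, hence by hypothesis $Y\in\sfP$, giving $\sfD\subseteq\sfP$.

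The only step with any subtlety is the identification $f(\sfC\cap\sfD)=\bigvee_{(X,Y)}\acy(X\otimes Y)$, which requires the infinite distributivity of the frame $\Dl(\sfT)$ together with Bousfield idempotence of compact objects (both supplied by Proposition~\ref{pr:D(T)} and recalled in the proof of Lemma~\ref{le:T^c}); once this is in hand, injectivity of $f$ does the rest and the prime criterion is a routine transcription.
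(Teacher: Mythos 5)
Your proof is correct, and it reaches the same underlying computation as the paper, but by a more direct route. The paper deduces the corollary from Proposition~\ref{pr:compact_support}(3) (the formula $\supp_{\sfT^c}(X\otimes Y)=\supp_{\sfT^c}(X)\cap\supp_{\sfT^c}(Y)$) together with Corollary~\ref{co:spec}: the two subcategories have the same support, and the support map is a bijection onto the open subsets of $\Sp(\sfT^c)$, so they coincide. You instead bypass the spectrum entirely and work with the embedding $f\col\Th(\sfT^c)\to\Dl(\sfT)$, showing $f(\sfC\cap\sfD)=f(\sfE)$ and invoking injectivity of $f$ from Lemma~\ref{le:T^c}. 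Since Corollary~\ref{co:spec} is itself obtained from Theorem~\ref{th:spec}, which rests on Lemma~\ref{le:T^c}, the two arguments are equivalent in substance; yours unwinds the Stone-duality packaging and exposes the distributivity computation that does the real work, while the paper's version keeps the statement and proof in the language of supports that the section is building up. The only step you leave implicit is the identification $f(\sfE)=\bigvee_{(X,Y)}\acy(X\otimes Y)$, which needs the observation that $\acy(Z)\le\bigvee_{(X,Y)}\acy(X\otimes Y)$ for every $Z$ in the thick subcategory generated by the objects $X\otimes Y$ (this is the unnumbered lemma following Lemma~\ref{le:T^c}, or the first implication of Lemma~\ref{lem:supp}); this is routine. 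Your derivation of the prime criterion from the intersection formula is the ``immediate consequence'' the paper alludes to, correctly spelled out.
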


\begin{proof}
  From the formula $\supp_{\sfT^c}(X\otimes Y)=\supp_{\sfT^c}(X)\cap
  \supp_{\sfT^c}(Y)$ it follows that $\sfC\cap\sfD$ and
  $\Thick(\{X\otimes Y\mid X\in\sfC,\,Y\in\sfD\}$ have the same
  support. Thus they coincide by Corollary~\ref{co:spec}. The second
  assertion is an immediate consequence.
\end{proof}

\begin{remark}
  Let $\sfT$ be a compactly generated tensor triangulated category
  that is stratified via the action of a graded-commutative noetherian
  ring. Suppose also that the graded endomorphism ring of each compact
  object is finitely generated over $R$. The homeomorphisms
\[
\supp_R(\sfT)\xra{\sim}\Sp(\sfT)=\Sp(\Dl(\sfT))\quad \text{and}\quad 
\supp_R(\sfT)\xra{\sim}\Sp(\sfT^c)=\Sp(\Th(\sfT^c))
\] 
from Examples~\ref{ex:loc} and \ref{ex:thick} are compatible with $\Sp(f)$, in that, the following diagram is commutative.
\[
\xymatrix{
&\supp_R(\sfT)\ar[dl]_{\sim}\ar[dr]^\sim\\
\Sp(\sfT)\ar[rr]^-{\Sp(f)}&&\Sp(\sfT^c)}
\]
In particular, the map $\Sp(f)$ is bijective. The topology on
$\Sp(\sfT)$ is discrete, while the one on $\Sp(\sfT^c)$ usually is not.
\end{remark}

\subsection*{Stone versus Zariski topology}

Fix a commutative ring $A$. Then $\Spec A$, the set of prime ideals
of $A$ with the Zariski topology, is the prototypical example of a
spectral topological space \cite{Hochster:1969a}. One can think of the
tensor triangulated category $\sfD^\per(A)$ of perfect complexes over
$A$ as a categorification of $\Spec A$, because the space $\Sp
(\sfD^\per(A))$ endowed with the Stone topology is homeomorphic to the
Hochster dual of $\Spec A$; see the Example~\ref{ex:thomason} below,
which shows that the formal notion of support for tensor triangulated
categories is equivalent to the familiar notion from algebraic
geometry.\footnote{For a lattice theoretic analysis of this example,
  see \cite{Buan/Krause/Solberg:2007a}. In \cite[p.~1442]{Rota:1997a},
  Rota writes: `To this day lattice theory has not made much of a dent
  in the sect of algebraic geometers; if it ever does, it will
  contribute new insights'.}

\begin{example}
\label{ex:thomason}
Fix a quasi-compact and quasi-separated scheme $(X,\mathcal O_X)$;
every noetherian scheme has these properties. The complexes of
$\mathcal O_X$-modules with quasi-coherent cohomology form a compactly
generated tensor triangulated category $\sfT=\sfD_{\mathrm{Qcoh}}(X)$,
and its category of compact objects $\sfT^c$ identifies with the
category $\sfD^\per (X)$ of perfect complexes \cite{Neeman:1996a}. In
\cite{Thomason:1997}, Thomason classified the tensor closed thick
subcategories of $\sfD^\per (X)$ using the following notion of
support. For a complex $x\in\sfD^\per(X)$, write
\[
\supp_X(x)=\{P\in X\mid x_P\neq 0\}\,.
\]
Then the assignments
\[
\sfD^\per(X)\supseteq
\sfC\mapsto\bigcup_{x\in\sfC}\supp_X(x)\quad\text{and}\quad X\supseteq
Y\mapsto \{x\in\sfD^\per(X)\mid\supp_X(x)\subseteq Y\}
\]
induce bijections between
\begin{enumerate} 
\item the set of all tensor closed thick subcategories of $\sfD^\per(X)$, and
\item the set of all subsets $Y\subseteq X$ of the form
$Y=\bigcup_{i\in \Omega} Y_i$ with quasi-compact open complement
$X\setminus Y_i$ for all $i\in \Omega$.
\end{enumerate}
\end{example}

The example above also illustrates the different topologies that are in use in describing the spectrum of a tensor triangulated category. Thus we return to the diagram from the introduction and can now
explain its commutativity.

Fix a compactly generated tensor triangulated category $\sfT$.  It
follows from Corollary~\ref{co:cap} that the set $\Sp (\sfT^c)$ of
prime elements of $\Th(\sfT^c)$ coincides with the spectrum of prime
ideals of $\sfT^c$ defined by Balmer in \cite{Balmer:2005a}. Observe
that $\Th(\sfT^c)$ has been identified with a sublattice of the
Bousfield lattice in Theorem~\ref{th:spec}.  Using the Stone topology,
the support $\supp_{\sfT^c}(X)$ of a compact object $X$ is a
quasi-compact open subset of $\Sp(\sfT^c)$. Hochster duality
\cite{Hochster:1969a} turns this into a closed set in the Zariski
topology, which is used in \cite{Balmer:2005a}.

\subsection*{Stratification revisited}

Let $\sfT$ be a compactly generated tensor triangulated category.  We
propose a notion of stratification which does not involve the action
of a graded-commutative noetherian ring; instead we use the spectrum
$\Sp(\sfT^c)$. This provides the connection between the stratification
from \cite{\bik:2009a} and recent work of Balmer and Favi \cite{BF},
and Stevenson \cite{Stevenson:2011a}.

Suppose that the space $\Sp(\sfT^c)$ satisfies the descending chain
condition (dcc) on open subsets, that is, the Hochster dual is a
noetherian space. Fix a prime $\sfP$ in $\Sp(\sfT^c)$.  The following
lemma implies that there are open subsets $U,V$ such that $U\setminus
V=\{\sfP\}$.

\begin{lemma}
Let $X$ be a $T_0$-space satisfying the dcc on open
subsets. Then there exists for each $x\in X$ a pair of open subsets
$U,V$ such that $U\setminus V=\{x\}$.\qed
\end{lemma}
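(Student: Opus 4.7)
The plan is to produce the open sets by first using the descending chain condition to pin down a minimal open neighborhood of $x$ and then using the $T_0$ separation property to carve out the complement.

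Concretely, I would first let $\mcU$ be the family of open subsets of $X$ containing $x$; this family is non-empty since $X\in\mcU$, and the dcc on open subsets guarantees that $\mcU$ has a minimal element $U$. The idea is that this $U$ will be the first of the two open sets in the conclusion.

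Next, I would show that for every $y\in U$ with $y\ne x$, there exists an open set $W_y$ with $y\in W_y$ and $x\notin W_y$. Here the $T_0$ axiom provides an open set $W$ separating $x$ and $y$, a priori in either direction; but if $W$ contained $x$ and not $y$, then $U\cap W$ would be an open set in $\mcU$ strictly contained in $U$ (since $y\in U\setminus (U\cap W)$), contradicting minimality. Hence $W$ must contain $y$ and not $x$, giving the desired $W_y$. This is the key step of the argument, though it is hardly an obstacle once the minimality of $U$ is in hand.

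Finally, I would set
\[
V=\bigcup_{\substack{y\in U\\ y\ne x}}W_y,
\]
which is open as a union of open sets. By construction $x\notin V$, while every $y\in U$ distinct from $x$ lies in $V$. Consequently $U\setminus V=\{x\}$, as required. No further subtleties arise; the only place the hypotheses are both used is in establishing the existence of a minimal open neighborhood of $x$ (dcc) and in ensuring the separating open set points the right way ($T_0$ combined with minimality).
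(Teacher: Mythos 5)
Your proof is correct. The paper states this lemma without proof (the \qed follows the statement directly), and your argument --- take a minimal open neighborhood $U$ of $x$ via the dcc, use $T_0$ plus minimality to force the separating open sets to exclude $x$, and union them to get $V$ --- is precisely the standard argument the authors leave to the reader.
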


Using the bijection from Corollary~\ref{co:spec}, we get tensor closed
thick subcategories $\sfC,\sfD$ of $\sfT^c$ such that for any  $\sfQ$
in $\Sp(\sfT^c)$
\[\sfC\not\subseteq \sfQ,\, \sfD\subseteq\sfQ\quad\iff\quad\sfQ=\sfP.\]
Now define a functor $\gam_{\{\sfP\}}\col\sfT\to\sfT$ by setting
$\gam_{\{\sfP\}}=L_\sfD\gam_\sfC$. Here, $L_\sfD$ denotes the
localization functor with kernel $\tloc(\sfD)$, and $\gam_\sfC$
denotes the colocalization functor with essential image $\tloc(\sfC)$;
see Lemma~\ref{le:conservative}.  The functor $\gam_{\{\sfP\}}$ is
studied in \cite[\S7]{BF}; it is the analogue of the local cohomology
functor $\gam_\fp$ for a prime ideal $\fp$ of a graded-commutative
noetherian ring $R$ acting on $\sfT$ introduced in
\cite[\S5]{\bik:2008a}. The argument given in
\cite[Theorem~6.2]{\bik:2008a} shows that the definition of
$\gam_{\{\sfP\}}$ does not depend on the choice of
$\sfC,\sfD$. Similarly, the analogue of \cite[Lemma~2.4]{\bik:2009a}
gives
\begin{equation}\label{eq:gam_p}
\gam_{\{\sfP\}}\gam_{\{\sfQ\}}=
\begin{cases}
\gam_{\{\sfP\}}& \text{if }  \sfP=\sfQ,\\ 0&      \text{otherwise}.     
\end{cases}
\end{equation}

Following \cite[\S3]{\bik:2009a} and \cite[\S6]{Stevenson:2011a}, we
say that the \emph{local-global principle} holds for $\sfT$ if for
each object $X$ in $\sfT$
\[
\tloc(X)=\tloc(\{\gam_{\{\sfP\}}X\mid \sfP\in\Sp(\sfT^c)\})\,.
\]
For example, the local-global principle holds when $\sfT$ has a model,
by \cite[Proposition~6.7]{Stevenson:2011a}, or when the dimension of
$\Sp(\sfT^c)$ is finite \cite[Corollary~3.5]{\bik:2009a}.

Following \cite[\S4]{\bik:2009a}, we say that $\sfT$ is
\emph{stratified} by $\Sp(\sfT^c)$ if 
\begin{enumerate}
\item $\Sp(\sfT^c)$, endowed with the Stone topology, satisfies the descending chain
condition on open subsets,
\item the local-global principle holds
for $\sfT$, and 
\item for each $\sfP$ in $\Sp(\sfT^c)$ there is no non-zero
tensor closed localizing subcategory $\sfS\subseteq\sfT$ which is
properly contained in $\gam_{\{\sfP\}}\sfT$.
\end{enumerate}

A specific example of a category $\sfT$ which is stratified by
$\Sp(\sfT^c)$ is the derived category $\sfD(\Qcoh X)$ of the category
of quasi-coherent $\mathcal O_X$-modules for a separated noetherian
scheme $(X,\mathcal O_X)$; see Example~\ref{ex:noeth-scheme}.  Another
example can be found in recent work of Stevenson
\cite{Stevenson:2011b} concerning certain singularity categories.

Let us end by pointing out an analogue of Example~\ref{ex:conservative}.

\begin{proposition}
Suppose that the local-global principle holds for $\sfT$. Then the
functors $\gam_{\{\sfP\}}$ induce isomorphisms
\[
\Acy(\sfT)\xra{\sim}\prod_{\sfP\in\Sp(\sfT^c)}\Acy(\gam_{\{\sfP\}}\sfT)\quad\text{and}\quad
\Dl(\sfT)\xra{\sim}\prod_{\sfP\in\Sp(\sfT^c)}\Dl(\gam_{\{\sfP\}}\sfT)\,.
\]
If $\sfT$ is stratified by $\Sp(\sfT^c)$, this yields an isomorphism
$\Acy(\sfT)\xra{\sim}\mathbf 2^{\Sp(\sfT^c)}$.
\end{proposition}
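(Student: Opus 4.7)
The plan is to follow the pattern of Example~\ref{ex:conservative}, substituting the intrinsic local cohomology functors $\gam_{\{\sfP\}}$ for the ring-theoretic $\gam_\fp$. The three inputs I will rely on are: (i) $\gam_{\{\sfP\}}=L_\sfD\gam_\sfC$ is smashing, so $\gam_{\{\sfP\}}X\cong\gam_{\{\sfP\}}\one\otimes X$ for every $X\in\sfT$ and the essential image $\gam_{\{\sfP\}}\sfT$ is a tensor closed subcategory of $\sfT$ with unit $\gam_{\{\sfP\}}\one$; (ii) the orthogonality relation~\eqref{eq:gam_p}, which in particular yields $\gam_{\{\sfP\}}\one\otimes\gam_{\{\sfP\}}\one\cong\gam_{\{\sfP\}}\one$; and (iii) the local-global principle, which is part of the hypothesis.

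First I would define the candidate isomorphism and its inverse
\[
\varphi\bigl(\acy(X)\bigr)=\bigl(\acy(\gam_{\{\sfP\}}X)\bigr)_\sfP,\qquad \psi\bigl((\acy(X_\sfP))_\sfP\bigr)=\acy(\coprod_\sfP X_\sfP),
\]
and verify well-definedness. The main technical point is the identity
\[
\acy_{\gam_{\{\sfP\}}\sfT}(\gam_{\{\sfP\}}W)=\acy_\sfT(W)\cap\gam_{\{\sfP\}}\sfT\qquad\text{for all }W\in\sfT,
\]
which follows from smashness: for $Z\in\gam_{\{\sfP\}}\sfT$ one has $\gam_{\{\sfP\}}W\otimes Z\cong W\otimes Z$. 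Dually, for $X_\sfP\in\gam_{\{\sfP\}}\sfT$ and any $Z\in\sfT$, the identity $X_\sfP\otimes Z\cong X_\sfP\otimes\gam_{\{\sfP\}}Z$ shows that the Bousfield class of $\coprod_\sfP X_\sfP$ is determined by its slices, making $\psi$ well-defined.

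Then I would check the two composites. For $\psi\circ\varphi=\mathrm{id}$, the local-global principle gives $\tloc(X)=\tloc(\coprod_\sfP\gam_{\{\sfP\}}X)$, and equality of $\tloc$ always forces equality of Bousfield class by mutual containment. For $\varphi\circ\psi=\mathrm{id}$, I use~\eqref{eq:gam_p}: since $X_\sfP\cong\gam_{\{\sfP\}}X_\sfP$, one has $\gam_{\{\sfQ\}}(\coprod_\sfP X_\sfP)\cong\coprod_\sfP\gam_{\{\sfQ\}}\gam_{\{\sfP\}}X_\sfP\cong X_\sfQ$. The corresponding isomorphism $\Dl(\sfT)\cong\prod_\sfP\Dl(\gam_{\{\sfP\}}\sfT)$ will follow by observing that Bousfield idempotency is preserved and detected slice-by-slice, using the computation $\gam_{\{\sfP\}}X\otimes\gam_{\{\sfP\}}X\cong\gam_{\{\sfP\}}(X\otimes X)$.

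For the final assertion, stratification condition~(3) says $\gam_{\{\sfP\}}\sfT$ admits no non-zero proper tensor closed localizing subcategory. Any Bousfield class $\acy_{\gam_{\{\sfP\}}\sfT}(X)$ is such a subcategory, and is proper when $X\ne 0$ in $\gam_{\{\sfP\}}\sfT$ since $\gam_{\{\sfP\}}\one\otimes X\cong X\ne 0$; hence $\Acy(\gam_{\{\sfP\}}\sfT)=\{0,\gam_{\{\sfP\}}\sfT\}\cong\mathbf 2$, and the product collapses to $\mathbf 2^{\Sp(\sfT^c)}$. The main obstacle I anticipate is the well-definedness step: absent a ring action, both $\varphi$ and $\psi$ must be shown to respect Bousfield equivalence using only smashness and~\eqref{eq:gam_p}, but once the two reduction identities above are installed the remainder of the argument closes by formal manipulation.
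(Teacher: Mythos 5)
Your proposal is correct and follows the paper's own (very terse) argument: the paper likewise takes $(\acy(X_\sfP))_\sfP\mapsto\acy(\coprod_\sfP X_\sfP)$ as the inverse and invokes the local-global principle together with the orthogonality relation~\eqref{eq:gam_p}. Your write-up simply fills in the details the paper leaves implicit (smashness of $\gam_{\{\sfP\}}$, the slice identity for Bousfield classes, and the collapse of each $\Acy(\gam_{\{\sfP\}}\sfT)$ to $\mathbf 2$ under stratification), all of which check out.
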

\begin{proof}
Sending $(\acy(X_\sfP))_\sfP$ to $\acy(\coprod_\sfP X_\sfP)$ provides
an inverse. This follows from the local-global principle and the
identity \eqref{eq:gam_p}.
\end{proof}

\bibliographystyle{amsplain}

\begin{thebibliography}{99}

\bibitem{ASS:2004a} L. Alonso Tarr\'\i o, A. Jerem\'\i as L\'opez\ and\ M. J. Souto Salorio, \emph{Bousfield localization on formal schemes}, J. Algebra \textbf{278} (2004), no.~2, 585--610.

\bibitem{Balmer:2005a} P. Balmer, \emph{The spectrum of prime ideals in tensor triangulated categories}, J. Reine Angew. Math. \textbf{588} (2005), 149--168.

\bibitem{BF} P. Balmer and G. Favi, \emph{Generalized tensor idempotents and the telescope conjecture}, Proc. London Math. Soc. \textbf{102} (2011), no. 6, 1161--1185.

\bibitem{Benson/Iyengar/Krause:2008a} D.~J. Benson, S.~B. Iyengar, and H.~Krause, \emph{Local cohomology and support for triangulated categories}, Ann. Sci. \'Ecole Norm. Sup. (4) \textbf{41} (2008), 573--619.

\bibitem{Benson/Iyengar/Krause:2008b} D.~J. Benson, S.~B. Iyengar, and H.~Krause, \emph{Stratifying modular representations of finite groups}, Ann. of Math. (1) \textbf{174} (2011) 1643--1684. 

\bibitem{Benson/Iyengar/Krause:2009a} D.~J. Benson, S.~B. Iyengar, and H.~Krause, \emph{Stratifying triangulated categories}, J. Topology, 
\textbf{4} (2011) 641--666.
 
\bibitem{Benson/Iyengar/Krause:2010a} D.~J. Benson, S.~B. Iyengar, and H.~Krause, \emph{Colocalizing subcategories and cosupport}, J. Reine Angew. Math., to appear, arXiv:1008.3701.

\bibitem{Borceux:1994a} F. Borceux, \emph{Handbook of categorical algebra. 3}, Encyclopedia of
Mathematics and its Applications, 52, Cambridge Univ. Press, Cambridge, 1994.

\bibitem{Bousfield:1979a} A.~K.~Bousfield, \emph{The Boolean algebra of spectra}, Comm. Math. Helv. \textbf{54} (1979), 368--377. Correction: Comm. Math. Helv. \textbf{58} (1983), 599--600.

\bibitem{Buan/Krause/Solberg:2007a} A. B. Buan, H. Krause, and \O. Solberg, \emph{Support varieties: an ideal approach, Homology}, Homotopy Appl.  \textbf{9} (2007), no.~1, 45--74.

\bibitem{Casacuberta/Gutierrez/Rosicky:2012a}
C.~Casacuberta, J.~J.~Guti\'errez, J.~Rosick\'y, \emph{A generalization of Ohkawa's theorem}, preprint 2012, arXiv:1203.6395.

\bibitem{Christensen/Strickland:1998a} J. D. Christensen and N. P. Strickland, \emph{Phantom maps and homology theories}, Topology  \textbf{37} (1998), no.~2, 339--364.

\bibitem{Dwyer/Palmieri:2001a} W. G. Dwyer and J. H. Palmieri, \emph{Ohkawa's theorem: there is a set of Bousfield classes}, Proc. Amer. Math. Soc.  \textbf{129} (2001), no.~3, 881--886.

\bibitem{Dwyer/Palmieri:2008a} W. G. Dwyer and J. H. Palmieri, \emph{The Bousfield lattice for truncated polynomial algebras}, Homology, Homotopy Appl.  \textbf{10} (2008), no.~1, 413--436.

\bibitem{Gabriel:1962a} P. Gabriel, \emph{Des cat\'egories ab\'eliennes}, Bull. Soc. Math. France {\bf 90} (1962), 323--448.

\bibitem{Hochster:1969a} M. Hochster, \emph{Prime ideal structure in commutative rings}, Trans. Amer. Math. Soc. \textbf{142} (1969), 43--60.

\bibitem{Hovey/Palmieri:1999a} M. Hovey and J. H. Palmieri, \emph{The structure of the Bousfield lattice}, in: Homotopy invariant algebraic structures (Baltimore, MD, 1998), 175--196, Contemp. Math., 239 Amer. Math. Soc., Providence, RI, 1999.

\bibitem{Hovey/Palmieri/Strickland:1997a} M.~Hovey, J.~H.~Palmieri, and N.~P. Strickland, \emph{Axiomatic stable homotopy theory}, Mem. Amer. Math. Soc., vol. 610, Amer. Math. Soc., (1997).

\bibitem{Iyengar:2011} S.~B.~Iyengar, \emph{The Bousfield lattice of the stable module category of a finite group}, 
in: Representation Theory of Quivers and Finite Dimensional Algebras (Oberwolfach, 2011), Oberwolfach Reports \textbf{8} (2011) no. 1, European Mathematical Society, Z\"urich 2011.

\bibitem{Johnstone:1982} P. T. Johnstone, \emph{Stone spaces}, reprint of the 1982 edition, Cambridge Studies in Advanced Mathematics, 3, Cambridge Univ. Press, Cambridge, 1986.

\bibitem{Kock:2007}
J.~Kock, \emph{Spectra, supports, and Hochster duality}, letter to G. Favi and P. Balmer, December 2007, available at \texttt{http://mat.uab.es/\~{}kock/cat/spec.pdf}
    
\bibitem{Krause:2000} H. Krause, \emph{Smashing subcategories and the telescope conjecture---an algebraic approach}, Invent. Math. \textbf{139} (2000), no.~1, 99--133.

\bibitem{Krause:2001a} H. Krause, \emph{On Neeman's well generated triangulated categories}, Doc. Math. \textbf{6} (2001), 121--126.

\bibitem{Krause:2010a} H. Krause, \emph{Localization theory for
triangulated categories}, in \emph{Triangulated categories}, 161--235,
London Math. Soc. Lecture Note Ser., 375, Cambridge Univ. Press,
Cambridge, 2010.

\bibitem{Lewis:1986a} L. G. Lewis, Jr.\ et al., \emph{Equivariant stable homotopy theory}, Lecture Notes in Mathematics, 1213, Springer, Berlin, 1986.

\bibitem{Neeman:1992a} A. Neeman, \emph{The chromatic tower for $D(R)$}, Topology \textbf{31} (1992), no.~3, 519--532.

\bibitem{Neeman:1992b} A. Neeman, \emph{The connection between the $K$-theory localization theorem of Thomason, Trobaugh and Yao and the smashing subcategories of Bousfield and Ravenel}, Ann. Sci. \'Ecole
Norm. Sup. (4) \textbf{25} (1992), no.~5, 547--566.

\bibitem{Neeman:1996a} A. Neeman, \emph{The Grothendieck duality theorem via Bousfield's techniques and Brown representability},  J. Amer. Math. Soc. \textbf{9} (1996), no.~1, 205--236.

\bibitem{Neeman:2001a} A. Neeman, \emph{{Triangulated categories}}, Annals of Math. Studies, vol. 148, Princeton Univ.\ Press, 2001.

\bibitem{Ohkawa:1989a} T. Ohkawa, \emph{The injective hull of homotopy types with respect to generalized homology functors}, Hiroshima Math. J. \textbf{19} (1989), no.~3, 631--639.

\bibitem{Ravenel:1992a} D. C. Ravenel, \emph{Nilpotence and periodicity in stable homotopy theory}, Annals of Mathematics Studies, 128, Princeton Univ. Press, Princeton, NJ, 1992.

\bibitem{Rota:1997a} G.-C. Rota, \emph{The many lives of lattice theory}, Notices Amer. Math. Soc. \textbf{44} (1997), no.~11, 1440--1445.

\bibitem{Stevenson:2011a}
G.~Stevenson, \emph{Support theory via actions of tensor triangulated categories}, J. Reine Angew. Math., to appear, arXiv:1105.4692.

\bibitem{Stevenson:2011b}
G.~Stevenson, \emph{Subcategories of singularity categories via tensor actions}, preprint 2011, arXiv:1105:4698.

\bibitem{Stone:1937a} M.~H.~Stone, \emph{Topological representations of distributive lattices and Brouwerian logics}, \v{C}asopis pro P\v{e}stov\'an\'i Matematiky a Fysiky \textbf{67} (1937), 1--25. 

\bibitem{Strickland:2004a}
N.~P.~Strickland, \emph{Axiomatic stable homotopy}, Axiomatic, enriched and motivic homotopy theory, 69--98, NATO Sci. Ser. II Math. Phys. Chem., 131, Kluwer Acad. Publ., Dordrecht,  2004.
 
\bibitem{Thomason:1997} R. W. Thomason, \emph{The classification of triangulated subcategories}, Compositio Math. \textbf{105} (1997),  no.~1, 1--27.

\bibitem{Verdier:1996a} J.-L. Verdier, \emph{{Des cat\'egories d\'eriv\'ees des cat\'egories ab\'eliennes}}, Ast\'erisque, vol. 239, Soci\'et\'e Math.\ de France, 1996.

\end{thebibliography}

\end{document}